\newcommand\numberthis[1][]{%
	\refstepcounter{equation}%
	\ifx#1\empty\else\label{#1}\fi%
	\tag{\theequation}%
}
\numberwithin{equation}{section}
\renewcommand{\L}{\mathrm{L}}
\newcommand{\W}{\mathrm{W}}
\renewcommand{\H}{\mathrm{H}}
\renewcommand{\C}{\mathrm{C}}
\newcommand{\N}{\mathbb{N}}
\newcommand{\R}{\mathbb{R}}
\newcommand{\fflow}{\mathbf{f}}
\newcommand{\mflow}{\mathbf{m}}
\newcommand{\nflow}{\mathbf{n}}
\newcommand{\uflow}{\mathbf{u}}
\newcommand{\vflow}{\mathbf{v}}
\newcommand{\wflow}{\mathbf{w}}
\newcommand{\xflow}{\mathbf{x}}
\newcommand{\yflow}{\mathbf{y}}
\newcommand{\Cflow}{\mathbf{C}}
\newcommand{\Hflow}{\mathbf{H}}
\newcommand{\Lflow}{\mathbf{L}}
\newcommand{\Xflow}{\mathbf{X}}
\newcommand{\Vflow}{\mathbf{V}}
\newcommand{\Wflow}{\mathbf{W}}
\newcommand{\AMat}{\mathbb{A}}
\newcommand{\BMat}{\mathbb{B}}
\newcommand{\DMat}{\mathbb{D}}
\newcommand{\LMat}{\mathbb{L}}
\newcommand{\HMat}{\mathbb{H}}
\newcommand{\SMat}{\mathbb{S}}
\newcommand{\TMat}{\mathbb{T}}
\newcommand{\WMat}{\mathbb{W}}
\newcommand{\Sph}{{\SMat^{d-1}}}
\newcommand{\Rdd}{\R^{\text{d}\times \text{d}}}
\newcommand{\Hdiv}{\Hflow_{0,\text{div}}^1(\Omega)}
\newcommand{\Ldivp}[1]{\Lflow_\text{div}^{#1}(\Omega)}
\newcommand{\Zp}[1]{\text{Z}_{#1}}
\newcommand{\HkOS}[1]{\H^{#1}(\OmSph)}
\newcommand{\LpOS}[1]{\L^{#1}(\OmSph)}
\newcommand{\Vk}[1]{\Vflow^{#1}}
\newcommand{\surfaceLtr}{g} 
\newcommand{\Div}{\operatorname{div}}
\newcommand{\nablax}{\nabla_\xflow}
\newcommand{\Divx}{\Div_\xflow}
\newcommand{\Deltag}{\Delta_\surfaceLtr}
\newcommand{\nablag}{\nabla_\surfaceLtr}
\newcommand{\Divg}{\Div_\surfaceLtr}
\newcommand{\nablam}{\nabla_\mflow}
\newcommand{\sRot}{\mathcal{R}}	
\newcommand{\ddt}{\frac{\text{d}}{\dt}}
\newcommand{\lrcb}[1]{\left(#1\right)} 
\newcommand{\pos}[1]{\left[#1\right]_+}
\newcommand{\negat}[1]{\left[#1\right]_-}
\newcommand{\tr}[1]{\operatorname{tr}\left(#1\right)}
\newcommand{\IdMat}{\mathbb{I}}
\newcommand{\Je}[1]{\mathcal{J}_{\mollEps}\left( #1 \right)}
\newcommand{\JeC}{\mathcal{J}_{\mollEps}}
\newcommand{\JeMat}[1]{\mathbb{J}_{\mollEps}\left( #1 \right)}
\newcommand{\JeMatC}{\mathbb{J}_{\mollEps}}
\newcommand{\entrop}{\mathcal{F}}
\newcommand{\dentrop}{\entrop^\prime}
\newcommand{\ddentrop}{\entrop^{\prime\prime}}
\DeclareMathOperator*{\esssup}{ess\,sup}
\newcommand{\norm}[1]{\left\lVert#1\right\rVert}
\newcommand{\abs}[1]{\ensuremath{\left\vert#1\right\vert}}
\newcommand{\dx}{\, \text{d}\xflow}
\newcommand{\dt}{\,\text{dt}}
\newcommand{\dH}{\, \text{d}\mathscr{H}}	
\newcommand{\dHm}{\dH(\mflow)}
\newcommand{\dHmt}{\dH(\mflowtilde)}
\newcommand{\OmSph}{{\Omega\times\Sph}}
\newcommand{\nx}{\nflow_\xflow} 
\newcommand{\intT}{\int_0^T}
\newcommand{\intOmSph}{\int_{\OmSph}}
\newcommand{\intSph}{\int_{\Sph}}
\newcommand{\intOm}{\int_{\Omega}}
\newcommand{\RE}{\text{Re}}		
\newcommand{\DE}{\text{De}}		
\newcommand{\activity}{\alpha}		
\newcommand{\muPsi}{\mu_\psi}	
\newcommand{\muPsiov}{\overline{\mu}_\psi}
\newcommand{\pot}{\text{V}}		
\newcommand{\potov}{\overline{\pot}}
\newcommand{\potStr}{\xi}		
\newcommand{\viscFrac}{\gamma}	
\newcommand{\mollEps}{\varepsilon}	
\newcommand{\mflowtilde}{\widetilde{\mflow}}
\newcommand{\psiov}{\overline{\psi}}
\newcommand{\psihat}{\hat{\psi}}
\newcommand{\uflowhat}{\hat{\uflow}}
\newcommand{\Stress}{\DMat}		
\newcommand{\ndens}{\nu}    
\newcommand{\mom}{\mflow\otimes\mflow}
\newcommand{\weakstar}{\overset{*}{\rightharpoonup}}
\newcommand{\weak}{\rightharpoonup}
\newcommand{\taup}{{\tau,+}}
\newcommand{\taum}{{\tau,-}}
\newcommand{\CmollEps}{\overline{\C}_\mollEps}
\newcommand{\regL}{\text{L}}
\newcommand{\regDelta}{\delta}
\newcommand{\entropL}{\entrop^\regL}
\newcommand{\dentropL}{\lrcb{\entropL}^\prime}
\newcommand{\ddentropL}{\lrcb{\entropL}^{\prime\prime}}
\newcommand{\QL}{\text{Q}^\regL}
\newcommand{\QZL}{\text{Q}_0^\regL}
\newcommand{\muPsidLl}{\mu_{\psi, \regDelta, \regL, \lambda}}
\newcommand{\psiL}[1]{\psi_{\regL}^{#1}}
\newcommand{\muPsidL}[1]{\mu_{\psi, \regDelta, \regL}^{#1}}
\newcommand{\uL}[1]{\uflow_{\regL}^{#1}}
\newcommand{\potL}[1]{\pot_{\regL}^{#1}}
\newcommand{\PLtau}{\lrcb{\mathcal{P}_{\regL}^\tau}}
\newcommand{\omegaL}[1]{\omega_{\regL}^{#1}}
\newcommand{\StressL}[1]{\Stress_{\regL}^{#1}}
\newcommand{\dtau}{\partial_{t}^{\tau}}
\newtheorem{thm}{Proposition}[section]
\newtheorem{defi}[thm]{Definition}
\newtheorem{lmm}[thm]{Lemma}
\newtheorem{thrm}[thm]{Theorem}
\newtheorem{crllr}[thm]{Corollary}
\newtheorem{rmrk}[thm]{Remark}
\begin{document}
	
\title[Global Weak Solutions to an Inhomogeneous Doi Model]{Existence of Global Weak Solutions to an Inhomogeneous Doi Model for Active Liquid Crystals}
\author{Oliver Sieber}
\address{Department of Mathematics, Friedrich-Alexander-Universit\"at Erlangen-N\"urnberg, Cauerstr. 11, 91058 Erlangen, Germany}
\email{oliver.sieber@fau.de}
\date{\today{}}

\begin{abstract}
	In this paper, we consider an inhomogeneous Doi model which was introduced by W. E and P. Zhang [\textit{Meth. Appl. of Anal.}, 13 (2006), pp. 181-198]. We extend their model, which couples a Smoluchowski equation to a Navier-Stokes type equation, for active particles by introducing an additional stress tensor.
	Exploiting the energetic and entropic structure of the system, we establish the existence of global-in-time weak solutions in two and three space dimensions for both passive and active particles.
	In particular, our result holds for minimal regularity assumptions on the initial data and without restrictions on the Reynolds and Deborah number.
	
\end{abstract}

\subjclass[2010]{35Q35; 76A15; 76D03; 35Q92; 76D05; 82C31}

\keywords{Doi model; Active material; Navier-Stokes equation; Existence of weak solutions; Global entropy solution; Liquid crystals}

\maketitle

\section{Introduction}
\label{sec:intro}
For more than 130 years (cf. Lehmann \cite{lehmann1889krystalle}) liquid crystals inspire many scientists in a lot of ways.
Their field of applications ranges from  photonics and optics (e.g. liquid crystal displays (LCD)) over materials science (cf. \cite{palffy2007lc}) up to biophysics, for example to describe the cytoskeleton (cf. \cite{ahmadi2006activeLC, kierfeld2008active}). In the latter, there is often an activity mechanism involved which drives the system out of equilibrium and which will be discussed in more detail later.
Liquid crystals can be considered as a system which has mesomorphic states between an ordinary liquid and a crystal. In particular, such systems consist of rodlike polymers which interact pairwise and are characterized by long-range orientational order and not or only partially by positional order.
In the 1980s, Doi developed a model for rodlike molecules which describes the properties of liquid crystal polymers in a solvent (cf. \cite[Chapter 8]{doi1986theory}, \cite{doi1981molec_dynamics, degennes1993LC}). This model consists of a Smoluchowksi equation describing the evolution of a pseudo probability density of particles $\psi(\xflow, \mflow, t)$
with center of mass in $\xflow \in \Omega$ and orientation $\mflow \in \Sph$ at time $t \geq 0$, where $\Sph$ denotes the unit sphere and $\Omega \subset \R^d$ is a bounded domain for $d \in \{2, 3\}$. 
If the interaction of the rodlike polymers is very strong or if the concentration of them is very high, the rods line up with each other and build a nematic phase. This phase transition problem was first described by Onsager (cf. \cite{onsager1949colloidal, zhang2007review}).
In the homogeneous case, that is in the absence of effects of fluid dynamics, this model is often referred to as Doi-Onsager model whereas the model is referred to as Doi-Hess model in the inhomogeneous case.
In the past, these models have been the subject of many mathematical studies (cf. \cite{constantin2005nonlFP, constantin2007smolNast, constantin2008smolNaSt2d, liu2005classification, luo2004structure, otto2008velocityGrad, zhou2005equilibria}).\\
However, the models for inhomogeneous flow did not take distortional elasticity into account (cf. \cite{e_zhang_2006_kinetic, zhang2008existenceNewModel}). There have been many attempts to extend the theory for inhomogeneous flow to include distortional elasticity on a microscopic level (cf. \cite{marruci1991nematics, tsuji1998nematicLC, rey1998LCrheology, wang2002nematicLC}). Unfortunately, these models are all phenomenological in nature and hence, they consist of many unknown parameters which are very difficult to determine in experiments.
Moreover, they often lack consistency with other theories and among themselves (cf. \cite{e_zhang_2006_kinetic}).\\
Then, E and Zhang (cf. \cite{e_zhang_2006_kinetic}) extended the Doi model for homogeneous flow of rodlike liquid crystalline polymers to inhomogeneous flow and introduced a nonlocal intermolecular potential which results in an extra term in the form of an elastic body force to take into account distortional elasticity.\\
In this paper, we consider unflexible rodlike polymers with constant length $l > 0$, diameter $b > 0$, ratio $r:= \frac{b}{l} \ll 1$ (see Figure \ref{fig:rod}), and a mean number density $\ndens > 0$ given by
\begin{align*}
\ndens := \frac{1}{\abs{\Omega}} \intOmSph \psi \, \text{d}\mathscr{H}^{d-1} \dx.
\end{align*}
In particular, we restrict ourselves to solutions with a concentrated regime described by $\nu b l^2 > 1$ such as liquid crystals (cf. \cite[Chapter 10.1]{doi1986theory}).
\begin{figure}[!htbp]
    \begin{tikzpicture}[scale=0.75]
        \def\mx{0}	
        \def\my{0}	
        \def\b{0.5}
        \def\l{6*\b}
        \def\sqrtzwei{1.4142}
        \def\sqrtzweiinv{0.70711}
        
        \def\llx{\mx - \sqrtzweiinv*\b/2 - \sqrtzweiinv*\l} 	
        \def\lly{\my + \sqrtzweiinv*\b/2 - \sqrtzweiinv*\l}		
        
        \def\ulx{\mx - \sqrtzweiinv*\b/2 + \sqrtzweiinv*\l}		
        \def\uly{\mx + \sqrtzweiinv*\b/2 + \sqrtzweiinv*\l}		
        
        \def\lrx{\mx + \sqrtzweiinv*\b/2 - \sqrtzweiinv*\l}		
        \def\lry{\my - \sqrtzweiinv*\b/2 - \sqrtzweiinv*\l}		
        
        \def\urx{\mx + \sqrtzweiinv*\b/2 + \sqrtzweiinv*\l}		
        \def\ury{\my - \sqrtzweiinv*\b/2 + \sqrtzweiinv*\l}		
        
        \def\radellipsex{\b/2}	
        \def\radellipsey{\b/4}	
        
        \def\disttorod{\b*2/3}	

        \draw (\llx, \lly) -- (\ulx, \uly);
        \draw (\lrx, \lry) -- (\urx, \ury);
        
        \node[circle,fill=black,inner sep=0pt,minimum size=5pt,label=below left:{$\mathbf{x}$}] (a) at (\mx, \my) {};
        
        \draw[rotate around={-45:(\llx/2 + \lrx/2, \lly/2 + \lry/2)}] (\llx/2 + \lrx/2, \lly/2 + \lry/2) ellipse ({\radellipsex} and {\radellipsey});
        \draw[rotate around={-45:(\urx, \ury)}] (\urx, \ury) arc(0:180: {\radellipsex} and {\radellipsey});
        
        \draw[arrows={angle 90 - angle 90}] (\llx - \disttorod, \lly + \disttorod) -- (\ulx - \disttorod, \uly + \disttorod);
        \draw[arrows={angle 90 - angle 90}] (\ulx + \disttorod, \uly + \disttorod) -- (\urx + \disttorod, \ury + \disttorod);
        
        \node (nameL) at (\mx - \sqrtzweiinv*\b - \disttorod - \b/2, \my + \sqrtzweiinv*\b + \disttorod + \b/2) {$l$};
        \node (nameB) at (\ulx/2 + \urx/2 + \disttorod + \b/2, \uly/2 + \ury/2 + \disttorod + \b/2) {$b$};
    \end{tikzpicture}
    \caption{Rodlike polymer with length $l$, diameter $b$, and center of mass $\xflow$.}
    \label{fig:rod}
\end{figure}
The model of E and Zhang in dimensional form (cf. \cite{e_zhang_2006_kinetic}) coupling a Smoluchowski equation to a Navier-Stokes type equation reads as follows for a given time $T > 0$:
\begin{subequations}
	\label{intro:eq:dimensional}
	\begin{align}
	\begin{split}
	\label{intro:eq:psi:dimensional:srot}
	&\partial_t \psi + \Divx(\uflow \psi) - \frac{1}{ k_B \mathcal{T}} \Divx\lrcb{ \psi\lrcb{ D_\parallel \mom + D_\perp (\IdMat - \mom) }\nablax \muPsiov } \\
	&\qquad - \frac{D_r}{ k_B \mathcal{T}} \sRot \cdot \lrcb{ \psi \sRot \muPsiov }  = -  \sRot \cdot \lrcb{ \psi \mflow \times (\nablax \uflow \mflow)  } \quad \text{ in } \OmSph\times(0,T],
	\end{split}\\
	\begin{split}
	\label{intro:eq:u:dimensional}
	&\rho \lrcb{ \partial_t \uflow + (\uflow \cdot \nablax) \uflow } + \nablax p = - \intSph \psi \nablax \muPsiov \, \text{d}\mathscr{H}^{d-1} + \Divx(\overline{\TMat}) \quad \text{ in } \Omega \times (0,T].
	\end{split}
	\end{align}
\end{subequations}
At this point let us refer to Section \ref{sec:model} for details about this model.
As the degrees of freedom of the microscopic equation are quite high, there have been many attempts to derive macroscopic equations from the Doi model (a good overview over the different models from a mathematical point of view can be found in \cite{emmrich2018LC, ball2017LC}).
In particular, the interest in the Ericksen-Leslie equations (cf. \cite{ericksen1961convervationLC, ericksen1991orientationLC, leslie1968LC}), the Q-Tensor equations (cf. \cite{degennes1993LC, majumdar2010LC, beris1994thermodynamics}), and the mathematical analysis of these models (cf. \cite{lin1995dissipativeLC, lin1996dynamicLC, lin2000ericksenLeslie, cavaterra2013LC, ball2010LC, paicu2011QTensor, paicu2012QTensor, huang2015LC, abels2014QTensor}) has been growing significantly over the last years.
Kuzuu and Doi's (cf. \cite{kuzuu1983LC}) formal derivation of the Ericksen-Leslie equation from the Doi-Onsager equation in the homogeneous case has been extended to the inhomogeneous case from systematic asymptotic analysis of \eqref{intro:eq:dimensional} (cf. E and Zhang \cite{e_zhang_2006_kinetic}). Moreover, Wang et al. (cf. \cite{wang2015deborahLimit}) established a rigorous derivation of the Ericksen-Leslie equation from \eqref{intro:eq:dimensional}. Furthermore, Han et al. (cf. \cite{han2015microToMacro}) derived a new Q-Tensor model from \eqref{intro:eq:dimensional}, which obeys the energy dissipation law, by applying the Bingham-closure (cf. \cite{bingham1974}). Therefore, as the model \eqref{intro:eq:dimensional} is not just a fundamental model itself but a starting point for the derivation of many equations, which find a lot of interest these days, it is important to study it analytically.

Chen and Liu (\cite{chen_liu_2013_entropy_solution}) proved the existence of global-in-time weak solutions to \eqref{intro:eq:dimensional} including active stresses induced by active particles (see Section \ref{sec:model} for more details regarding activity) in two and three space dimensions in the case that the friction constant is zero, that is $\xi_r = 0$ (see \eqref{eq:viscous_stress_p}) and that the interaction potential equals zero, that is $\pot[\psi] \equiv 0$ (see \eqref{def:pot}).
Zhang and Zhang (cf. \cite{zhang2008existenceNewModel}) considered the model \eqref{intro:eq:dimensional}, periodic domains, and high regularity assumptions on the initial data.
They proved the existence of local solutions in two and three space dimensions (cf. \cite[Theorem 1.1]{zhang2008existenceNewModel}). 
In the case of two space dimensions, they additionally showed the existence of global-in-time weak solutions (in the sense that $0 < T < \infty$ is given) (cf. \cite[Remark 1.2]{zhang2008existenceNewModel}).
Moreover, they established the existence of global weak solutions (in the sense that $[0,T) = [0, \infty)$) if the Deborah number and the Reynolds number are sufficiently small for $d \in \{2, 3\}$ (cf. \cite[Theorem 1.2]{zhang2008existenceNewModel}).
However, proving the existence of global-in-time weak solutions to \eqref{intro:eq:dimensional} (for passive or active particles in the sense that $0 < T < \infty$ is given) for three space dimensions without assumptions on the Reynolds and Deborah number was stated as an open problem by Emmrich et al. (cf. \cite[p.42]{emmrich2018LC}).

The main contribution of this paper is the proof of the latter problem. In Main Theorem \ref{thrm:main}, we establish the existence of global-in-time weak solutions to \eqref{intro:eq:dimensional} for both passive and active particles (in the sense that $0 < T < \infty$ is given) in two and three space dimensions under minimal regularity assumptions on the initial data. Moreover, our result holds for arbitrary values of the Reynolds and Deborah number. To be precise, we consider the model $(\mathcal{P})$, which is introduced in Section \ref{sec:model}.

The outline of this paper is as follows: 
First, we complete Section \ref{sec:intro} by introducing the model $(\mathcal{P})$, which is an extension of \eqref{intro:eq:dimensional} for active particles.
Then, we present some notation, function spaces, and frequently used formula in Section \ref{sec:preliminares}. To get started, we derive an entropy estimate \eqref{formal:entropy_est} for the model $(\mathcal{P})$ in Section \ref{sec:formal_entropy_estimate} on a formal level.
As we want to make our computations rigorous, we present a discrete-in-time scheme $\PLtau$ (see \eqref{def:PLtau}) in Section \ref{sec:discrete_in_time_approx}, where $\tau > 0$ is a small time increment and $\regL > 0$ is a cut-off parameter from above. The regularization parameter $\regL$ is used to establish the existence of discrete-in-time solutions. Nevertheless, future bounds in energy estimates shall be independent of $\regL$.\\
To show the existence of a solution to $\PLtau$ in Section \ref{sec:ex_of_sols_to_PLtau}, we introduce a fixed-point scheme in Definition \ref{def:fixed_point_iteration}, prove in Lemma \ref{lmm:fixed_point_iter} that the scheme is well defined, and finally apply Schaefer's fixed-point theorem in Lemma \ref{lmm:existence_fp} to complete the proof.\\
Having established the existence of a solution to $\PLtau$, we devote Section \ref{sec:uniform_entropy_estimate} to the derivation of an entropy estimate in Lemma \ref{lmm:time_discrete_entropy} which mimics the formal estimate \eqref{formal:entropy_est}. Unfortunately, the regularity results of the entropy estimate \eqref{est:discrete_entropy} are too weak to pass to the limit in $\PLtau$ as $\tau \searrow 0$ and $\regL \nearrow \infty$.
Moreover, testing the Smoluchowski equation with $\psi$ is fraud with difficulties to gain more regularity. However, we shall circumvent this problem by deriving an equality for the polymer number density
\begin{align*}
\omega(\xflow, t) := \intSph \psi(\xflow, \mflow, t) \, \text{d}\mathscr{H}^{d-1}(\mflow) \quad \text{ for } (\xflow, t) \in \Omega \times [0,T]
\end{align*}
and show regularity results for $\omega$ which turn out to be very useful to improve the regularity of $\psi$ afterwards. Such a method is often applied to microscopic macroscopic models (cf. \cite{barrett_sueli_2011_polymers_I, chen_liu_2013_entropy_solution, gruen_metzger_2015microMacro}). The derivation of regularity results for the macroscopic discrete-in-time polymer number density is realized in Section \ref{sec:particle_density}.\\
Then, we establish time regularity results for $\psi$ and $\uflow$ in Section \ref{sec:time_regularity}. To pass to the limit in Section \ref{sec:passage_limit} as $\tau \searrow 0$ and $\regL \nearrow \infty$, we exploit our regularity results to prove strong convergence for suitable subsequences. Instead of using common results of Aubin-Lions or Simon (cf. \cite{lions1969quelques, simon1986compact}), we apply a compactness result of Dreher and J\"ungel (cf. \cite{dreher2012compact}), which relies on hypothesis on time translation and avoids the construction of linear interpolation functions in time.

\subsection{Doi model for active liquid crystals}
\label{sec:model}
Before we extend the model for active particles, let us describe the system \eqref{intro:eq:dimensional}  in more detail.
The velocity field is denoted by $\uflow: \Omega \times [0,T] \to \R^d$, $p: \Omega \times (0,T] \to \R$ is the pressure, $k_B$ is the Boltzmann constant, $\mathcal{T}$ is the absolute temperature, $\ndens > 0$ is the mean number density of polymers, $D_r = \frac{k_B \mathcal{T}}{\xi_r}$ is the rotary diffusivity, $\xi_r > 0$ is the friction coefficient, $\rho > 0$ is the fluid density, $\mathscr{H}^{d-1}$ is the Hausdorff measure, and $D_\parallel > 0$ and $D_\perp > 0$ are the translational diffusion coefficients parallel and normal to the orientation of the rods.
Moreover, it is always assumed that $\mflow \in \Sph$.
Furthermore, $\nablax$ and $\Divx$ denote the gradient and divergence operator with respect to $\xflow \in \Omega$. The rotational gradient operator $\sRot$ on the unit sphere is defined by $\sRot = \mflow \times \nablam$, where $\nablam$ is the gradient with respect to $\mflow \in \Sph$. Moreover, the chemical potential $\muPsiov$ is given by
\begin{align}
\label{def:muPsi}
\muPsiov := k_B \mathcal{T} \dentrop(\psi) + k_B \mathcal{T} \pot[\psi],
\end{align}
where $\entrop: \R_{\geq 0} \ni s \mapsto s (\log(s) - 1) + 1 \in \R_{\geq 0}$ is an entropy functional.

The interaction potential $\potov[\psi] : \OmSph\times[0,T] \to \R$ modeling the effects of alignment is defined by
\begin{align}
\label{def:pot}
\pot[\psi](\xflow, \mflow, t) :=  \potStr \Je{ \intSph \psi(\cdot, \mflowtilde, t) K(\mflow, \mflowtilde) \, \text{d}\mathscr{H}^{d-1}(\mflowtilde)} (\xflow)
\end{align}
for $(\xflow, \mflow, t) \in \OmSph \times [0,T]$, where the dimensionless parameter $\potStr > 0$ denotes the strength of the interaction potential, $\JeC$ is an isotropic mollifier which is defined below, and $K(\mflow, \mflowtilde)$ is an interaction kernel for $\mflow, \mflowtilde \in \Sph$ which should have small values for $\mflow$ and $\mflowtilde$ being aligned and large values otherwise. In the original Onsager model (cf. \cite{onsager1949colloidal}), the interaction kernel is given by
\begin{align*}
K(\mflow, \mflowtilde) = \abs{\mflow \times \mflowtilde} \quad \text{ for } \mflow, \mflowtilde \in \Sph.
\end{align*}
Another common choice is the Maier-Saupe kernel (cf. \cite{maier_saupe_1958})
\begin{align}
\label{def:interaction_kernel}
K(\mflow, \mflowtilde) := \abs{\mflow \times \mflowtilde}^2 = 1 - (\mflow \cdot \mflowtilde)^2 \quad \text{ for } \mflow, \mflowtilde \in \Sph,
\end{align}
which has also been used by E and Zhang (cf. \cite{e_zhang_2006_kinetic}).
In this paper, we restrict ourselves to the Maier-Saupe interaction kernel \eqref{def:interaction_kernel}.
Moreover, the isotropic mollifier $\JeC: \L^1(\Omega) \to \W^{1, \infty}(\Omega)$ in \eqref{def:pot} is defined by
\begin{align}
\label{def:mollifier}
\Je{f}(\xflow) := \intOm \zeta_\mollEps(\xflow - \yflow) f(\yflow) \, \text{d}\yflow \quad \text{ for } f \in \L^1(\Omega),
\end{align}
where $\zeta_\mollEps (\xflow) := \mollEps^{-d} \zeta(\mollEps^{-1} \xflow)$ for $\xflow \in \Omega$ and $\mollEps > 0$.
Hereby, the nonnegative, rotationally symmetric, and compactly supported function $\zeta \in \C_0^\infty(\R^d)$ is given by $\zeta(\xflow) := c \exp(-\frac{1}{1 - \abs{\xflow}^2})$ if $\abs{\xflow} < 1$ and $\zeta(\xflow) = 0$ otherwise. In particular, the constant $c > 0$ is chosen such that $\int_{\R^d} \zeta(\xflow) \dx = 1$.
Therefore, on noting a property of the mollifier (see \eqref{mollifier:fg}), we have that $\muPsiov$ is the first variation of the energy functional
\begin{align*}
\mathcal{E}(\psi)(t) &:= k_B \mathcal{T} \intOmSph\entrop(\psi)(\xflow, \mflow, t) \,  \text{d}\mathscr{H}^{d-1}(\mflow) \dx \\
&\quad + \frac{ k_B \mathcal{T}}{2} \intOmSph  \psi(\xflow, \mflow, t) \pot[\psi](\xflow, \mflow, t) \,  \text{d}\mathscr{H}^{d-1}(\mflow) \dx
\end{align*}
for $t \in [0,T]$.

Now, let us describe the physical interpretation of the terms in \eqref{intro:eq:dimensional} in more detail.
The term on the right-hand side of \eqref{intro:eq:psi:dimensional:srot} describes the rotation of particles induced by the macroscopic velocity gradient $\nablax \uflow$ (note \eqref{intro:eq:psi:dimensional:nablag} for the rewritten form of this term).
As illustrated in Figure \ref{fig:rotated_rod}, a rod $\mflow \in \Sph$ is rotated by the gradient of a macroscopic flow $\nablax \uflow$. However, as the rod cannot elongate and has to maintain its constant length, the rotated rod $\nablax \uflow \mflow$ is projected onto the tangential space of the unit sphere.
\begin{figure}[!htbp]
    \centering
    \begin{tikzpicture}[scale=1.0]
    
    \def\ulx{1.374}     
    \def\uly{1.624}     
    
    \def\urx{\uly}      
    \def\ury{\ulx}      
    
    \def\llx{-\urx}     
    \def\lly{-\ury}     
    
    \def\lrx{-\ulx}     
    \def\lry{-\uly}     
    
    \def\sulx{2.002}    
    \def\suly{0.720}    
    
    \def\surx{2.094}    
    \def\sury{0.378}    
    
    \def\sllx{-\surx}   
    \def\slly{-\sury}   
    
    \def\slrx{-\sulx}   
    \def\slry{-\suly}   
    
    \def\umx{\ulx/2 + \urx/2}       
    \def\umy{\uly/2 + \ury/2}       
    
    \def\lmx{\llx/2 + \lrx/2}       
    \def\lmy{\lly/2 + \lry/2}       
    
    \def\sumx{\sulx/2 + \surx/2}    
    \def\sumy{\suly/2 + \sury/2}    
    
    \def\slmx{\sllx/2 + \slrx/2}    
    \def\slmy{\slly/2 + \slry/2}    
    
    \def\mx{\ulx/4 + \urx/4 + \llx/4 + \lrx/4}    
    \def\my{\uly/4 + \ury/4 + \lly/4 + \lry/4}    
    
    \draw (\sllx, \slly) -- (\sulx, \suly) -- (\surx, \sury) -- (\slrx, \slry) -- cycle;
    
    \draw[fill=white] (\llx, \lly) -- (\ulx, \uly) -- (\urx, \ury) -- (\lrx, \lry) -- cycle;
    
    \draw[arrows={- angle 90}] (\lmx+0.1, \lmy+0.1) -- (\umx-0.1, \umy-0.1) node[pos=0.85, left=16pt, scale=0.75, rotate=45] {$\mathbf{m}$};
    
    \draw[arrows={- angle 90}, line width=1pt] (\umx, \umy) -- (\sumx, \sumy) node[pos=0.85, below=12pt, scale=0.75, rotate=15] {$\mathbf{(\mathbb{I} - \mathbf{m}\otimes\mathbf{m})} \nabla_{\mathbf{x}}\mathbf{u}\, \mathbf{m}$};
    
    \draw[dashed, rotate around={15:(\sumx, \sumy)}] (\sumx, \sumy) -- (\sumx + 4, \sumy);
    
    \draw[arrows={- angle 90}, line width=1pt, rotate around={-10:(\umx, \umy)}] (\umx, \umy) -- (\umx+2.5, \umy) node[pos=1.0, below=7pt, scale=0.75, rotate=15] {$\quad \nabla_{\mathbf{x}} \mathbf{u}\, \mathbf{m}$};
    
    \node[circle,fill=black,inner sep=0pt,minimum size=3pt] at (\mx, \my) {};
    \end{tikzpicture}
    \caption{A rod $\mflow$ is rotated by a macroscopic velocity gradient $\nablax \uflow$.}
    \label{fig:rotated_rod}
\end{figure}
The stress tensor on the right-hand side of \eqref{intro:eq:u:dimensional}
\begin{align*}
\overline{\TMat} := \overline{\TMat}_{\text{visc, s}} + \overline{\TMat}_{\text{visc, p}} + \overline{\TMat}_{\text{elast}}
\end{align*}
is composed of viscous stresses $\overline{\TMat}_{\text{visc}} := \overline{\TMat}_{\text{visc, s}} + \overline{\TMat}_{\text{visc, p}}$ and elastic stresses $\overline{\TMat}_{\text{elast}}$. Hereby,
\begin{align*}
\overline{\TMat}_{\text{visc, s}} := \eta_s \lrcb{\nablax \uflow + (\nablax \uflow)^T}
\end{align*}
models the viscous stresses coming from the solvent, where $\eta_s > 0$ denotes the solvent viscosity.
The viscous stresses $\overline{\TMat}_{\text{visc, p}}$ are due to the friction caused by the motion of rodlike polymers. They are related to the hydrodynamic energy dissipation $W$ (cf. \cite[(8.117)]{doi1986theory}) by
\begin{align}
\label{intro:eq:dissip}
W = \int_0^T \intOm \nablax\uflow : \overline{\TMat}_{\text{visc, p}} \dx\dt.
\end{align}
On noting Figure \ref{fig:rotated_rod}, the velocity of a rod relative to the fluid can be computed by
\begin{align*}
\uflow_{\text{rod, fluid}} = (\IdMat - \mom) \nablax \uflow \mflow - \nablax \uflow \mflow = - (\mom) \nablax \uflow \mflow.
\end{align*}
Moreover, the frictional force exerted on a rod is given by
\begin{align*}
\fflow_{\text{rod, frict}} = \frac{ \xi_r}{2} \uflow_{\text{rod, fluid}},
\end{align*}
where $\xi_r > 0$ is the friction constant. Therefore, the work done by the frictional force is calculated as
\begin{align*}
W &= \intT \intOm \intSph \psi \fflow_{\text{rod, frict}} \cdot \uflow_{\text{rod, fluid}} \, \text{d}\mathscr{H}^{d-1} \dx \dt \\
&= \frac{ \xi_r}{2} \intT\intOm\intSph \psi \lrcb{ \nablax \uflow : \mom }^2 \, \text{d}\mathscr{H}^{d-1}(\mflow)\dx\dt.
\end{align*}
Hence, on noting \eqref{intro:eq:dissip}, we have that
\begin{align}
\label{eq:viscous_stress_p}
\overline{\TMat}_{\text{visc, p}} = \frac{ \xi_r}{2} \intSph \psi (\nablax \uflow : \mom) \mom \, \text{d}\mathscr{H}^{d-1}(\mflow).
\end{align}
The body force term (the first term on the right-hand side of \eqref{intro:eq:u:dimensional}) and the elastic stress tensor $\overline{\TMat}_{\text{elast}}$ are related to the change in energy of $\mathcal{E}(\psi)$. In particular, they are inevitable for thermodynamic consistency and they have been derived by a virtual work principle (cf. \cite[Chapter 8.6]{doi1986theory}, \cite[p. 185f]{e_zhang_2006_kinetic}) resulting in an elastic stress tensor given by
\begin{align*}
\overline{\TMat}_{\text{elast}} := - \intSph \psi \lrcb{\mflow \times \sRot \muPsiov} \otimes \mflow \, \text{d}\mathscr{H}^{d-1}(\mflow).
\end{align*}


In our opinion, it is more convenient to work with the surface gradient operator $\nablag$ and the surface divergence operator $\Divg$ on $\Sph$ instead of $\sRot$. 
Noting $\sRot = \mflow \times \nablam$ and $\nablag = (\IdMat - \mom) \nablam$, we see that \eqref{intro:eq:psi:dimensional:srot} can equivalently be written as
\begin{align}
\label{intro:eq:psi:dimensional:nablag}
\begin{split}
&\partial_t \psi + \Divx(\uflow \psi) - \frac{1}{ k_B \mathcal{T}} \Divx\lrcb{ \psi\lrcb{ D_\parallel \mom + D_\perp (\IdMat - \mom) }\nablax \muPsiov } \\
&\quad - \frac{D_r}{ k_B \mathcal{T}} \Divg\lrcb{ \psi \nablag \muPsiov }  = -  \Divg\lrcb{ \psi (\IdMat - \mom) \nablax \uflow \mflow } \quad \text{ in } \OmSph\times(0,T]
\end{split}
\end{align}
and that the elastic stress tensor can be rewritten as
\begin{equation}
\label{intro:eq:stress_sRot_divg}
\begin{aligned}
\overline{\TMat}_{\text{elast}} &= - \intSph \psi (\mflow \times \sRot \muPsiov) \otimes \mflow \, \text{d}\mathscr{H}^{d-1}(\mflow)  \\
&=  \intSph \psi \left[ (\IdMat - \mom) \nablag \muPsiov \right] \otimes \mflow \, \text{d}\mathscr{H}^{d-1}(\mflow) .
\end{aligned}
\end{equation}

Now, let us come back to active liquid crystals which are systems that are far from equilibrium (cf. \cite{gao2017activeNematics}). The range of active nematic systems includes cytoskeletal filaments (cf. \cite{kierfeld2008active}), bacteria (cf. \cite{darnton2010swarming}), dense suspensions of microswimmers (cf. \cite{wensink2012livingFluids}), and microtubule bundles (cf. \cite{sanchez2012spontaneous}).
Instead of modeling passive particles, we now consider active rodlike polymers which induce active stresses described by an additional stress tensor $\overline{\TMat}_{\text{act}}$. Following the lines in \cite{gao2017analytical}, we model this stress tensor: We assume that each rod induces a symmetric surface flow (see Figure \ref{fig:active_flow}, Figure \ref{fig:rod})
\begin{align*}
\uflow_{\text{act}}(s) := \text{sgn}(s) u_\text{act} \mflow, \quad - \frac{l}{2} \leq s \leq \frac{l}{2},
\end{align*}
where $u_\text{act}$ is the signed surface flow speed, $\mflow$ is the unit orientation vector of the rod, and $\text{sgn}(\cdot)$ is the sign function.
\begin{figure}[!htbp]
    \centering
    \begin{tikzpicture}[scale=1.0]
    \def\mx{0}	
    \def\my{0}	
    \def\b{3/4}
    \def\l{6*\b}
    \def\disttobase{20pt}
    \def\eps{2pt}	
    
    \draw [|-|] (\mx - \l/2, \my) -- (\mx, \my) node[pos=0, below, scale=0.75] {$\mathbf{x} - \frac{l}{2}$} node[pos=1, below=3pt, scale=0.75] {$\mathbf{x}$};
    \draw [|-|] (\mx, \my) -- (\mx + \l/2, \my) node[pos=1, below, scale=0.75] {$\mathbf{x} + \frac{l}{2}$};
    
    \draw [->, line width=1pt, red] (\mx - \eps, \my - \disttobase) -- (\mx - \l/2, \my - \disttobase) node[pos=0.5, below=3pt, scale=0.75] {$- u_\text{act}\, \mathbf{m}$};
    \draw [->, line width=1pt, red] (\mx + \eps, \my - \disttobase) -- (\mx + \l/2, \my - \disttobase) node[pos=0.5, below=3pt, scale=0.75] {$u_\text{act}\, \mathbf{m}$};
    
    \draw[arrows={- triangle 90}, line width=1pt, blue] (\mx - \l/2, \my + \disttobase*3/4) -- (\mx + \l/2, \my + \disttobase*3/4) node[pos=0.5, above=3pt, scale=0.75] {$\mathbf{m}$};
    \end{tikzpicture}
    \caption{Symmetric surface flow induces by an active rod.}
    \label{fig:active_flow}
\end{figure}
Hereby, a rod produces an extensional straining flow along itself for $u_\text{act} > 0$ and a compressive one otherwise. Moreover, let us describe the position of a rod by its centerline $\Xflow(s) := \xflow + s \mflow$, $-\frac{l}{2} \leq s \leq \frac{l}{2}$, where $\xflow$ denotes the center of mass of a rod (see Figure \ref{fig:rod}). Then, the active stress caused by a single rod can be computed as follows on assuming that $\xflow = 0$ (cf. \cite{gao2017analytical, saintillan2013nonlinear}):
\begin{align*}
\overline{\TMat}_{\text{act, rod}}
&= - \frac{4 \pi \eta}{\abs{\log(e r^2)}} \int_{-\frac{l}{2}}^{\frac{l}{2}} \uflow_{\text{act}}(s) \otimes \Xflow(s) \,\text{d}s \\
&= - \frac{\pi \eta u_\text{act} l^2}{\abs{\log(e r^2)}} \mom,
\end{align*}
where $\eta := \eta_s + \eta_p > 0$ is the total viscosity, $\eta_p := \nu \xi_r$ is the polymer viscosity, $e := \exp(1)$, and $r = \frac{b}{l}$ (see Figure \ref{fig:rod}).
Hence, the active stress tensor due to the activity of the rodlike polymers is given by
\begin{align}
\label{intro:def:Tact_dimensional}
\overline{\TMat}_{\text{act}} = - \frac{\pi \eta u_\text{act} l^2}{\abs{\log(e r^2)}} \intSph \psi \lrcb{ \mom - \frac{1}{d}\IdMat } \, \text{d}\mathscr{H}^{d-1}(\mflow),
\end{align}
where the subtraction \eqref{intro:def:Tact_dimensional} modifies solely the pressure and has no influence on the flow. In particular, the modification in \eqref{intro:def:Tact_dimensional} is equivalent to subtracting the trace of the tensor to make it traceless.\\
%
%
It will turn out, note for example the formal energy estimate \eqref{formal:entropy_est}, that the additional active stresses may be seen as an internal source of energy as the thermodynamic equilibrium does not hold any longer in the sense that the total energy at a time $t_2 > t_1$ equals the total energy at time $t_1$ when there is no work done by external forces. This can be explained by the ability of active liquid crystals to convert energy from the local environment to mechanical work (cf. \cite{gao2017activeNematics}).\\
Next, to derive the model in nondimensionalised form, we follow the lines in \cite{e_zhang_2006_kinetic}: Let $L_0 > 0$ be a typical size of the flow region, $V_0 > 0$ the typical velocity scale, and $T_0 := \frac{L_0}{V_0}$ a typical convective time scale. 
Moreover, we scale the pseudo probability density $\psi$ by $\frac{1}{\nu}$, where $\nu > 0$ is the mean number density.
We introduce the Deborah number, which is defined by the ratio of the orientational diffusion time scale of the rods and the convective time scale of the fluid, given by
\begin{align*}
\DE := \frac{ \frac{\xi_r}{k_B \mathcal{T}} }{\frac{L_0}{V_0}} = \frac{\xi_r V_0}{k_B \mathcal{T} L_0}.
\end{align*}
Moreover, we note that $\eta_p = \ndens \xi_r$, $\eta = \eta_s + \eta_p > 0$ and set $\viscFrac := \frac{\eta_s}{\eta} \in (0,1)$. Then, the Reynolds number is defined by
\begin{align*}
\RE := \frac{\rho V_0 L_0}{\eta}.
\end{align*}
Furthermore, we set $\mollEps := \frac{l}{L_0} > 0$, where $l>0$ denotes the length of rods (see Figure \ref{fig:rod}).
We define the dimensionless translational diffusion parameters by
\begin{align*}
\hat{D}_\parallel := \frac{\xi_r}{l^2 k_B \mathcal{T}} D_\parallel \ \text{ and } \ \hat{D}_\perp := \frac{\xi_r}{l^2 k_B \mathcal{T}} D_\perp.
\end{align*}
It usually holds that $\hat{D}_\parallel > \hat{D}_\perp > 0$ (\cite[p. 296f]{doi1986theory}) and in most of the cases one has that $\hat{D}_\parallel = 2 \hat{D}_\perp$ (\cite[p. 296f]{doi1986theory}). From an analytical point of view, it is important that the tensor
\begin{align*}
\hat{D}_\parallel \mom + \hat{D}_\perp (\IdMat - \mom)
= \hat{D}_\perp \IdMat + (\hat{D}_\parallel - \hat{D}_\perp) \mom
\end{align*}
is symmetric and positive definite, which is true if $\hat{D}_\parallel \geq \hat{D}_\perp > 0$. Hence, we set $\hat{D}_\parallel = \hat{D}_\perp = 1$ for ease of notation in this paper.
On noting \eqref{intro:def:Tact_dimensional}, we introduce the dimensionless activity parameter
\begin{align*}
\activity := - \frac{\pi \eta l^2 u_\text{act}}{\abs{\log(e r^2)} k_B \mathcal{T}}.
\end{align*}

Thus, the active model in nondimensionalised form (without relabelling $\psi$, $\uflow$, $p$, $\xflow$, and $t$ for ease of notation) reads as follows:\\
\hspace*{2ex}Given $T > 0$, find $\uflow: \Omega \times [0,T] \ni (\xflow, t) \mapsto \uflow(\xflow, t) \in \R^d$,\\
\hspace*{2ex}$p: \Omega \times (0,T] \ni (\xflow, t) \mapsto p(\xflow, t) \in \R$,\\
\hspace*{2ex}and $\psi: \OmSph\times[0,T] \ni (\xflow, \mflow, t) \mapsto \psi(\xflow, \mflow, t) \in \R$ such that
\begin{subequations}
    \label{eq:active:stokes}
    \begin{alignat}{3}
    \partial_t \uflow + (\uflow \cdot \nablax)\uflow + \nablax p &= - \frac{1 - \viscFrac}{\RE\DE} \intSph \psi \nablax \muPsi \, \text{d}\mathscr{H}^{d-1} \label{eq:active:u} \\
    &\quad + \Divx(\TMat)  \quad && \text{in } \Omega \times (0,T], \nonumber \\
    \Divx(\uflow) &= 0 \quad &&\text{in } \Omega \times (0,T], \label{eq:active:divu} \\
    \uflow &= 0 \quad &&\text{on } \partial\Omega \times (0,T], \label{eq:active:bc:u} \\
    \uflow(\xflow, 0) &= \uflow_0(\xflow) \quad &&\forall \xflow \in \Omega \label{eq:active:init:u}
    \end{alignat}
\end{subequations}
and
\begin{subequations}
    \label{eq:active:smoluchowski}
    \begin{align}
    & \partial_t \psi + \uflow \cdot \nablax \psi - \frac{\mollEps^2}{\DE} \Divx (\psi \nablax \muPsi) - \frac{1}{\DE} \Divg \lrcb{ \psi \nablag \muPsi } \nonumber \\
    &\quad = - \Divg \lrcb{ (\IdMat - \mom) \nablax \uflow \mflow \psi } \qquad \text{in } \Omega \times \Sph \times (0,T], \label{eq:active:psi} \\
    &\muPsi := \dentrop(\psi) + \pot[\psi] \qquad\qquad\qquad\qquad\ \ \, \text{in } \Omega \times \Sph \times (0,T], \label{eq:active:muPsi}\\
    &\psi \nablax \muPsi \cdot \nx = 0 \qquad\qquad\qquad\qquad\qquad\ \, \text{on } \partial \Omega \times \Sph \times (0,T], \label{eq:active:bc:psi} \\
    &\psi(\xflow, \mflow, 0) = \psi_0(\xflow, \mflow) \qquad\qquad\qquad\quad\ \ \ \forall (\xflow, \mflow) \in \OmSph, \label{eq:active:init:psi}
    \end{align}
\end{subequations}
where $\nx : \partial\Omega \to \R^d$ denotes the outward normal to $\partial\Omega$ and the stress tensor in the Navier-Stokes type equation is given by
\begin{align*}
\TMat := \TMat_{\text{visc, s}} + \TMat_{\text{visc, p}} + \TMat_{\text{elast}} + \TMat_{\text{act}}
\end{align*}
with
\begin{align*}
\TMat_{\text{visc, s}} &:= \frac{\viscFrac}{\RE} \lrcb{\nablax \uflow + (\nablax \uflow)^T}, \\
\TMat_{\text{visc, p}} &:= \frac{1 - \viscFrac}{2 \RE}  \intSph \psi (\nablax \uflow : \mom) \mom \, \text{d}\mathscr{H}^{d-1}(\mflow), \\
\TMat_{\text{elast}} &:= \frac{1 - \viscFrac}{\RE\DE}  \intSph \psi \left[ (\IdMat - \mom) \nablag \muPsi \right] \otimes \mflow \, \text{d}\mathscr{H}^{d-1}(\mflow), \\
\TMat_{\text{act}} &:= \frac{\activity(1 - \viscFrac)}{\RE\DE}  \intSph \psi \lrcb{\mom - \frac{1}{d}\IdMat} \, \text{d}\mathscr{H}^{d-1}(\mflow).
\end{align*}

%
The initial data $\uflow_0$ and $\psi_0$ will be specified later.

\begin{defi}
    The coupled problem \eqref{eq:active:stokes} - \eqref{eq:active:smoluchowski} will be referred to as model $(\mathcal{P})$ throughout this paper. We call this model passive if $\alpha = 0$ and active if $\alpha \neq 0$.
\end{defi}

\begin{rmrk}
    As $\activity$ has the opposite sign of $u_\text{act}$, contractile stress is modeled by $\activity > 0$ and extensile stress by $\activity < 0$.
\end{rmrk}

\section{Preliminaries}
\label{sec:preliminares}

\subsection{Notation}
We denote the real numbers by $\R$, the natural numbers by $\N := \{1, 2, \dots \}$ and set $\N_0 := \N \cup \{0\}$. In particular, we define $\R_{\geq 0} := [0, \infty)$ and $\R_{>0} := (0, \infty)$.
Moreover, we introduce the following scalar products: For $\vflow, \wflow \in \R^d$ and $\AMat, \BMat \in \R^{d \times d}$, we define $\vflow \cdot \wflow := \sum_{i=1}^d \vflow_i \wflow_i$ and $\AMat : \BMat := \sum_{i,j=1}^d \AMat_{ij} \BMat_{ij}$.
The trace of a tensor $\AMat \in \R^{d \times d}$ is given by $\tr{\AMat} := \sum_{i=1}^d \AMat_{ii}$.
The notation $\abs{\cdot}$ is used as follows: For a real number $a \in \R$, we set $\abs{a}$ to the absolute value of $a$, whereas $\abs{\vflow} := (\vflow \cdot \vflow)^{1/2}$ and $\abs{\AMat} := (\AMat : \AMat)^{1/2}$ for $\vflow \in \R^d$ and $\AMat \in \R^{d \times d}$.
Moreover, we set $\abs{U}$ to the $d$-dimensional Lebesgue measure of a measurable set $U \subset \R^d$.
For $\vflow, \wflow \in \R^d$, we define $\vflow \otimes \wflow := ( \vflow_i \wflow_j )_{1 \leq i,j \leq d}$.
We introduce the positive part $\pos{a}$ and the negative part $\negat{a}$ of a real number $a \in \R$ by $[a]_{\pm} := \frac{1}{2}(a \pm \abs{a})$.
For ease of notation, we write $\dH$ instead of $\text{d}\mathscr{H}^{d-1}$ in integrals for the Hausdorff measure.
The isotropic mollifier $\JeC: \L^1(\Omega) \to \W^{1, \infty}(\Omega)$ (cf. \eqref{def:mollifier}) is naturally extended to the tensor valued mollifier $\JeMatC: \LMat^1(\Omega) \to \WMat^{1, \infty}(\Omega)$.
The velocity gradient is defined by $(\nablax \uflow)_{1 \leq i,j \leq d} = (\partial_{\xflow_j} \uflow_i)$ for a differentiable function $\uflow : \Omega \to \R^d$. Moreover, we introduce the divergence of a differentiable function $\AMat : \Omega \to \R^{d \times d}$ by
\begin{align*}
\Divx(\AMat) := \lrcb{ \sum_{j=1}^d \partial_{\xflow_j} \AMat_{ij}}_{i=1}^d.
\end{align*}

\subsection{Function spaces}
We use the standard notation for Lebesgue and Sobolev spaces and define $\Lflow^p(\Omega) := \L^p(\Omega; \R^d)$, $\Wflow^{k, p}(\Omega) := \W^{k,p}(\Omega; \R^d)$, $\Hflow^k(\Omega) := \Wflow^{k, 2}($ $\Omega)$, $\LMat^p(\Omega) := \L^p(\Omega; \R^{d \times d})$, $\WMat^{k,p}(\Omega)$ $:=$ $\W^{k,p}(\Omega;$ $ \R^{d \times d})$, and $\HMat^k(\Omega) := \WMat^{k, 2}(\Omega)$ for $1 \leq p \leq \infty$ and $k \in \N$. Moreover, we introduce the spaces
\begin{align*}
\Hdiv &:= \left\{ \wflow \in \Hflow_0^1(\Omega):\ \Divx(\wflow) = 0 \right\}, \\
\Ldivp{2} &:= \left\{ \wflow \in \Lflow^2(\Omega):\ \intOm \wflow \cdot \nablax \varphi \dx = 0 \quad \forall \varphi \in \H^1(\Omega) \right\}, \\
\mathscr{V} &:= \left\{ \wflow \in \Cflow_0^\infty(\Omega):\ \Divx(\wflow) = 0 \right\}, \\
\Vk{k} &:= \overline{\mathscr{V}}^{\norm{\cdot}_{\Hflow_0^1(\Omega) \cap \Hflow^k(\Omega)}},\  k \in \N.
\end{align*}
We note that it holds that $\Vk{1} = \Hdiv$ (cf. Temam \cite[Chapter 1, Theorem 1.6]{temam1977navierNumerics}).
An introduction to Sobolev spaces $\W^{k,p}(\Sph)$ on the sphere can be found in \cite{kufner1977functionSpaces}. Furthermore, we define the dual spaces of $\W^{k,p}(\Omega \times \Sph)$ and $\Vk{k}$ by $(\W^{k,p}(\Omega \times \Sph))^\prime$ and $(\Vk{k})^\prime$, respectively. It will turn out useful to introduce the space
\begin{align*}
\Zp{p} := \left\{ \theta \in \LpOS{p}:\ \theta \geq 0 \text{ a.e. in } \OmSph \right\} \quad \forall 1 \leq p < \infty.
\end{align*}

\subsection{Frequently used formula}
Let us note some results which will be often referred to in future discussions.

\begin{lmm}
	Let the isotropic mollifier $\JeC: \L^1(\Omega) \to \W^{1, \infty}(\Omega)$ be defined by \eqref{def:mollifier}. Then, it holds that
	\begin{subequations}
		\begin{alignat}{2}
		\norm{ \Je{f} }_{\L^2(\Omega)} &\leq \norm{f}_{\L^2(\Omega)} \quad &&\forall f \in \L^2(\Omega), \label{mollifier:L2_stab}\\
		\intOm \Je{f} g \dx &= \intOm f \Je{g} \dx \quad &&\forall f,g \in \L^2(\Omega), \label{mollifier:fg}\\
		\norm{\Je{f}}_{\W^{1,\infty}(\Omega)} &\leq \CmollEps \norm{f}_{\L^1(\Omega)} \quad &&\forall f \in \L^1(\Omega). \label{mollifier:W_2_infty}
		\end{alignat}
	\end{subequations}
	Moreover, similar results hold for the tensor valued mollifier $\JeMatC$.
\end{lmm}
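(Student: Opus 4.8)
The plan is to verify the three estimates directly from the definition \eqref{def:mollifier}, using only that $\zeta_\mollEps \ge 0$, that $\zeta_\mollEps$ is even (being rotationally symmetric), that $\int_{\R^d}\zeta_\mollEps(\xflow)\dx = 1$, and that $\zeta_\mollEps \in \C_0^\infty(\R^d)$ with $\norm{\zeta_\mollEps}_{\L^\infty(\R^d)} = \mollEps^{-d}\norm{\zeta}_{\L^\infty(\R^d)}$ and $\norm{\nabla\zeta_\mollEps}_{\L^\infty(\R^d)} = \mollEps^{-d-1}\norm{\nabla\zeta}_{\L^\infty(\R^d)}$. A preliminary remark used throughout: since the convolution in \eqref{def:mollifier} is restricted to $\Omega$, nonnegativity of $\zeta_\mollEps$ gives $\intOm\zeta_\mollEps(\xflow-\yflow)\,\text{d}\yflow \le \int_{\R^d}\zeta_\mollEps(\xflow-\yflow)\,\text{d}\yflow = 1$ for every $\xflow\in\Omega$, and likewise $\intOm\zeta_\mollEps(\xflow-\yflow)\dx \le 1$ for every $\yflow\in\Omega$.

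For \eqref{mollifier:L2_stab} I would apply the Cauchy--Schwarz inequality with the weight $\zeta_\mollEps(\xflow-\cdot)$: splitting $\zeta_\mollEps(\xflow-\yflow) = \zeta_\mollEps(\xflow-\yflow)^{1/2}\,\zeta_\mollEps(\xflow-\yflow)^{1/2}$ yields $\abs{\Je{f}(\xflow)}^2 \le \bigl(\intOm\zeta_\mollEps(\xflow-\yflow)\,\text{d}\yflow\bigr)\bigl(\intOm\zeta_\mollEps(\xflow-\yflow)\abs{f(\yflow)}^2\,\text{d}\yflow\bigr) \le \intOm\zeta_\mollEps(\xflow-\yflow)\abs{f(\yflow)}^2\,\text{d}\yflow$. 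Integrating in $\xflow$ over $\Omega$, interchanging the order of integration by Tonelli's theorem (the integrand being nonnegative), and using evenness of $\zeta_\mollEps$ together with $\intOm\zeta_\mollEps(\xflow-\yflow)\dx\le1$ gives $\normLTwo{\Je{f}}^2 \le \normLTwo{f}^2$.

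For \eqref{mollifier:fg} I would invoke Fubini's theorem: since $\Omega$ is bounded we have $f,g\in\L^2(\Omega)\subset\L^1(\Omega)$, and the bound $\abs{\zeta_\mollEps(\xflow-\yflow)f(\yflow)g(\xflow)} \le \norm{\zeta_\mollEps}_{\L^\infty(\R^d)}\abs{f(\yflow)}\,\abs{g(\xflow)}$ shows the integrand is summable on $\Omega\times\Omega$; exchanging the order of integration and using evenness of $\zeta_\mollEps$ turns $\intOm\Je{f}g\dx$ into $\intOm f\Je{g}\dx$. For \eqref{mollifier:W_2_infty}, the pointwise bound $\abs{\Je{f}(\xflow)}\le\norm{\zeta_\mollEps}_{\L^\infty(\R^d)}\norm{f}_{\L^1(\Omega)}$ is immediate, and since $\zeta_\mollEps\in\C_0^\infty(\R^d)$ one may differentiate under the integral sign, $\partial_{\xflow_i}\Je{f}(\xflow) = \intOm\partial_{\xflow_i}\zeta_\mollEps(\xflow-\yflow)f(\yflow)\,\text{d}\yflow$, giving $\abs{\nablax\Je{f}(\xflow)}\le\norm{\nabla\zeta_\mollEps}_{\L^\infty(\R^d)}\norm{f}_{\L^1(\Omega)}$; then $\CmollEps := \mollEps^{-d}\norm{\zeta}_{\L^\infty(\R^d)} + \mollEps^{-d-1}\norm{\nabla\zeta}_{\L^\infty(\R^d)}$ works, and in particular $\Je{f}$ is Lipschitz, hence in $\W^{1,\infty}(\Omega)$. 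The tensor-valued assertions follow by applying the scalar estimates componentwise. There is no genuine obstacle here; the only subtlety worth flagging is that the convolution is taken over $\Omega$ rather than all of $\R^d$, so the mass normalization of $\zeta_\mollEps$ is available only as the two inequalities recorded in the first paragraph --- but these are exactly what is needed for \eqref{mollifier:L2_stab}, the one estimate that is not entirely trivial.
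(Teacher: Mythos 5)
Your proof is correct. The paper states this lemma without proof (these are standard mollifier estimates), and your argument — Cauchy--Schwarz with the weight $\zeta_\mollEps(\xflow-\cdot)$ plus Tonelli for \eqref{mollifier:L2_stab}, Fubini plus evenness of $\zeta_\mollEps$ for \eqref{mollifier:fg}, and differentiation under the integral sign for \eqref{mollifier:W_2_infty} — is exactly the standard route one would take, with the one genuinely non-trivial point (that restricting the convolution to $\Omega$ only gives $\intOm\zeta_\mollEps(\xflow-\yflow)\,\text{d}\yflow\leq 1$ rather than equality) correctly identified and handled.
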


Furthermore, we will exploit the following formula on the unit sphere (cf. \cite[Lemma 2.1, Lemma 1.1]{chen_liu_2013_entropy_solution}):

\begin{lmm}
	Let $\vflow \in \Hflow^1(\Sph)$ and $f, g \in \H^1(\Sph)$. Then, it holds that
	\begin{subequations}
		\begin{align}
		\intSph \Divg(\vflow) f \dH &= - \intSph \vflow \cdot \nablag f \dH + (d - 1) \intSph (\vflow \cdot \mflow) f \dHm, \label{eq:pInt:divg} \\
		\intSph (\Deltag f) g \dH &= - \intSph \nablag f \cdot \nablag g \dH. \label{eq:pInt:nablag}
		\end{align}
	\end{subequations}
\end{lmm}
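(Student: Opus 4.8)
The plan is to derive both identities from the classical divergence theorem in $\R^d$ by passing to homogeneous degree-zero extensions of $f$ and $\vflow$, and then to relax the regularity from $\C^\infty$ to the stated $\Hflow^1/\H^1$ spaces by density.

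First I would fix smooth representatives $f\in\C^\infty(\Sph)$, $\vflow\in\C^\infty(\Sph;\R^d)$ and introduce the extensions $\overline{f}(\xflow) := f(\xflow/\abs{\xflow})$ and $\overline{\vflow}(\xflow) := \vflow(\xflow/\abs{\xflow})$ on $\R^d\setminus\{0\}$, both homogeneous of degree $0$. Since $\nablag$ and $\Divg$ can be computed from \emph{any} $\C^1$-extension, $\nablag f = (\IdMat-\mom)\nabla\overline{f}$ and $\Divg\vflow = \operatorname{tr}\big((\IdMat-\mom)\nabla\overline{\vflow}\big)$, and since Euler's identity for a degree-$0$ function yields $\mflow\cdot\nabla\overline{f} = 0$ and $\mflow\cdot\nabla\overline{\vflow}_k = 0$ on $\Sph$ for each $k$, one obtains on $\Sph$ the simplifications $\nablag f = \nabla\overline{f}$ and $\Divg\vflow = \Div(\overline{\vflow})$. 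Moreover, as the gradient of a degree-$0$ function is homogeneous of degree $-1$, one gets the radial scaling $\nabla\overline{f}(s\mflow) = s^{-1}\nablag f(\mflow)$ and $\Div(\overline{\vflow})(s\mflow) = s^{-1}\Divg\vflow(\mflow)$ for all $s>0$ and $\mflow\in\Sph$.

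Next I would apply the Euclidean divergence theorem to $\overline{f}\,\overline{\vflow}$ on the spherical shell $A_{r,R} := \{\xflow\in\R^d:\ r<\abs{\xflow}<R\}$, $0<r<R$. Using the product rule, polar coordinates $\dx = s^{d-1}\,\text{d}s\,\dHm$, the radial scaling above, and the fact that $\partial A_{r,R}$ is the union of the spheres of radii $R$ and $r$ with outward unit normal $\nflow$ (equal to $\mflow$, resp.\ $-\mflow$) and surface measures $R^{d-1}\dHm$, resp.\ $r^{d-1}\dHm$, one computes
\begin{align*}
\frac{R^{d-1}-r^{d-1}}{d-1}\intSph \big(\nablag f\cdot\vflow + f\,\Divg\vflow\big)\dHm
&= \int_{A_{r,R}} \Div\big(\overline{f}\,\overline{\vflow}\big)\dx \\
&= \int_{\partial A_{r,R}} \overline{f}\,\overline{\vflow}\cdot\nflow\,\dH = \big(R^{d-1}-r^{d-1}\big)\intSph f\,(\vflow\cdot\mflow)\dHm,
\end{align*}
where $d\in\{2,3\}$ was used for $\int_r^R s^{d-2}\,\text{d}s = (R^{d-1}-r^{d-1})/(d-1)$. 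Dividing by $R^{d-1}-r^{d-1}\neq 0$ and rearranging gives \eqref{eq:pInt:divg} for smooth data. The general case follows since $\C^\infty(\Sph)$ is dense in $\H^1(\Sph)$ and $\C^\infty(\Sph;\R^d)$ in $\Hflow^1(\Sph)$ (cf.\ \cite{kufner1977functionSpaces}), and each of $\intSph f\,\Divg\vflow\dHm$, $\intSph \nablag f\cdot\vflow\dHm$, $\intSph f\,(\vflow\cdot\mflow)\dHm$ is continuous on $\H^1(\Sph)\times\Hflow^1(\Sph)$.

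Finally, \eqref{eq:pInt:nablag} is the special case $\vflow := \nablag f$ of \eqref{eq:pInt:divg}: one has $\Divg\vflow = \Deltag f$ by definition of the Laplace--Beltrami operator, and the boundary contribution vanishes because the surface gradient is tangential, $\nablag f\cdot\mflow = 0$; for $f$ merely in $\H^1(\Sph)$ the identity is read as the definition of the weak Laplacian (equivalently, proved for $f\in\C^\infty(\Sph)$ and extended by density). I do not expect a genuine obstacle in this lemma: the only delicate points are the Euler-identity bookkeeping ensuring that the homogeneous extension reproduces the surface operators, and the routine continuity check for the density argument; alternatively one may simply quote \cite[Lemma~2.1, Lemma~1.1]{chen_liu_2013_entropy_solution}.
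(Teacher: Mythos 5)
Your argument is correct, and it is worth noting that the paper itself offers no proof of this lemma at all: it simply quotes \cite[Lemma 2.1, Lemma 1.1]{chen_liu_2013_entropy_solution}. Your homogeneous degree-zero extension together with the divergence theorem on the annulus $A_{r,R}$ is the standard self-contained route to these surface integration-by-parts formulas, and the bookkeeping checks out: Euler's identity kills the radial derivatives so that $\nablag f = \nabla\overline{f}$ and $\Divg\vflow = \Div(\overline{\vflow})$ on $\Sph$, the degree $-1$ homogeneity of the derivatives produces the factor $\int_r^R s^{d-2}\,\mathrm{d}s = (R^{d-1}-r^{d-1})/(d-1)$, and the two boundary spheres contribute $(R^{d-1}-r^{d-1})\intSph f\,(\vflow\cdot\mflow)\dHm$, which after division yields exactly \eqref{eq:pInt:divg}; the extra term $(d-1)\intSph(\vflow\cdot\mflow)f\dHm$ reflects the mean curvature of $\Sph$. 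The density step is unproblematic since $\Sph$ is a closed manifold and all three bilinear forms are continuous on $\H^1(\Sph)\times\Hflow^1(\Sph)$. The only point requiring care is \eqref{eq:pInt:nablag}: for $f$ merely in $\H^1(\Sph)$ the left-hand side only makes sense with $\Deltag f$ read as an element of $(\H^1(\Sph))^\prime$, i.e.\ the identity is the weak definition of the Laplace--Beltrami operator (or one assumes $f\in\H^2(\Sph)$); you flag this correctly, and it is also how the identity is used in the paper, where it is only ever applied under the integral sign in weak formulations.
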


\begin{lmm}
	Let $f \in \W^{1,1}(\Sph)$ and $\AMat \in \Rdd$ be a constant matrix with $\tr{\AMat} = 0$. Then, it holds that
	\begin{align}
	\label{eq:chen_stress_formula}
	\intSph \lrcb{ (\IdMat - \mom) \AMat \mflow } \cdot \nablag f \dHm
	= \intSph f (d \mom - \IdMat) : \AMat \dHm.
	\end{align}
\end{lmm}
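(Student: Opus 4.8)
The plan is to read off \eqref{eq:chen_stress_formula} from the surface integration-by-parts identity \eqref{eq:pInt:divg}, applied to the vector field $\vflow := (\IdMat - \mom)\AMat\mflow$ on $\Sph$. This field is polynomial in $\mflow$, hence belongs to $\Hflow^1(\Sph)$, and it is everywhere tangential, since $\vflow \cdot \mflow = \AMat\mflow \cdot (\IdMat - \mom)\mflow = 0$. Therefore the curvature term in \eqref{eq:pInt:divg} drops out and, for $f \in \C^\infty(\Sph)$,
\begin{align*}
\intSph \big[ (\IdMat - \mom)\AMat\mflow \big] \cdot \nablag f \dHm = -\intSph \Divg\big( (\IdMat - \mom)\AMat\mflow \big)\, f \dHm .
\end{align*}
It then suffices to show $\Divg\vflow = (\IdMat - d\mom):\AMat$.

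To evaluate $\Divg\vflow$, write $\vflow = \AMat\mflow - (\mom:\AMat)\,\mflow$ and use two elementary facts about the tangential divergence on $\Sph$: for the restriction of a smooth ambient field one has $\Divg\wflow = (\IdMat - \mom):\nablam\wflow$, and $\Divg$ obeys the Leibniz rule $\Divg(g\,\wflow) = \wflow \cdot \nablag g + g\,\Divg\wflow$ for a scalar function $g$. Since $\nablam(\AMat\mflow) = \AMat$ and $\nablam\mflow = \IdMat$, this yields $\Divg(\AMat\mflow) = (\IdMat - \mom):\AMat = \tr{\AMat} - \mom:\AMat$ and $\Divg\mflow = d - 1$; combined with $\mflow \cdot \nablag g = 0$, the product rule gives $\Divg\big( (\mom:\AMat)\mflow \big) = (d-1)(\mom:\AMat)$. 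Subtracting and using $\tr{\AMat} = 0$,
\begin{align*}
\Divg\vflow = \tr{\AMat} - d\,(\mom:\AMat) = -\,d\,(\mom:\AMat) = -\,(d\mom - \IdMat):\AMat ,
\end{align*}
where $\tr{\AMat} = 0$ is used once more in the last step. Inserting this into the displayed identity proves \eqref{eq:chen_stress_formula} for all $f \in \C^\infty(\Sph)$.

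Finally, to reach the stated regularity $f \in \W^{1,1}(\Sph)$, I would pass to the limit along smooth approximations of $f$ in the $\W^{1,1}(\Sph)$-norm: the left-hand side of \eqref{eq:chen_stress_formula} is bounded by $\norm{\vflow}_{\L^\infty(\Sph)}\norm{\nablag f}_{\L^1(\Sph)}$ and the right-hand side by $\norm{d\mom - \IdMat}_{\L^\infty(\Sph)}\norm{f}_{\L^1(\Sph)}$, so both depend continuously on $f$ in this norm and the identity extends by density.

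I do not expect a genuine obstacle: the computation is short. The one point requiring care is consistency of conventions — in particular, that the operator $\Divg$ appearing in \eqref{eq:pInt:divg} coincides with the ambient-projection formula used to compute $\Divg\vflow$, which is precisely why it is convenient that $\vflow$ is tangential, so that no normal-component correction enters — together with the bookkeeping involving $\tr{\AMat} = 0$. (Alternatively, splitting $\AMat$ into symmetric and antisymmetric parts: for symmetric traceless $\AMat$, $\vflow = \nablag\big(\tfrac12\,\mom:\AMat\big)$ with $\mom:\AMat$ a degree-two spherical harmonic, so \eqref{eq:pInt:nablag} and $\Deltag(\mom:\AMat) = -2d\,(\mom:\AMat)$ give the claim, while for antisymmetric $\AMat$ both sides vanish; the direct argument above is however uniform in $\AMat$.)
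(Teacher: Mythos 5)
Your proof is correct. The paper itself offers no proof of this lemma --- it is quoted as a known identity from Chen and Liu (the label \texttt{eq:chen\_stress\_formula} and the preceding citation of \cite{chen_liu_2013_entropy_solution} make the provenance clear) --- so there is no in-paper argument to compare against; judged on its own, your computation is sound and complete. The key points all check out: $\vflow := (\IdMat - \mom)\AMat\mflow$ is tangential because $(\IdMat-\mom)$ is symmetric and $(\IdMat-\mom)\mflow = 0$, so the curvature term in \eqref{eq:pInt:divg} indeed vanishes; the ambient-projection formula gives $\Divg(\AMat\mflow) = (\IdMat-\mom):\AMat$ and $\Divg\mflow = d-1$, and the Leibniz rule with $\nablag(\mom:\AMat)\cdot\mflow = 0$ yields $\Divg\vflow = \tr{\AMat} - d\,(\mom:\AMat) = -(d\mom-\IdMat):\AMat$ once $\tr{\AMat}=0$ is invoked. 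Your remark about consistency of the $\Divg$ convention is exactly the right thing to flag, and tangentiality of $\vflow$ does dispose of it. The density step from $\C^\infty(\Sph)$ to $\W^{1,1}(\Sph)$ is unproblematic on a closed manifold, with both sides controlled by $\norm{f}_{\W^{1,1}(\Sph)}$ since $\vflow$ and $(d\mom-\IdMat):\AMat$ are bounded. The alternative route via $\vflow = \nablag\lrcb{\tfrac12\,\mom:\AMat}$ and the eigenvalue $-2d$ of $\Deltag$ on degree-two spherical harmonics is also valid for the symmetric part, and the antisymmetric part does make both sides vanish; but as you say, the direct divergence computation is cleaner and uniform in $\AMat$.
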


Moreover, the following lemma is frequently used, which follows directly from $\nablag = (\IdMat - \mom) \nablam$, $(\mflow \cdot \mflowtilde)^2 = (\mom) : (\mflowtilde \otimes \mflowtilde)$ for $\mflow, \mflowtilde \in \Sph$, and \eqref{def:pot}.
\begin{lmm}
	\label{lmm:nablag_g_pot_formula}
	It holds that
	\begin{align}
	\label{eq:nabla_g_pot_formula}
	\nablag \pot[\theta](\xflow, \mflow) = - 2 \potStr (\IdMat - \mom) \JeMat{ \intSph \theta(\cdot, \mflowtilde) \mflowtilde \otimes \mflowtilde \dHmt } (\xflow) \mflow
	\end{align}
	for all $\theta \in \LpOS{1}$ for $(\xflow, \mflow) \in \OmSph$.
\end{lmm}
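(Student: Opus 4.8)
The key observation is that, for the Maier--Saupe kernel \eqref{def:interaction_kernel}, the $\mflow$-dependence of the interaction potential \eqref{def:pot} is quadratic and separates completely from the spatial mollification. For $\theta\in\LpOS{1}$ one has, by \eqref{def:pot} and \eqref{def:interaction_kernel},
\[
\pot[\theta](\xflow,\mflow)=\potStr\,\Je{\intSph\theta(\cdot,\mflowtilde)\bigl(1-(\mflow\cdot\mflowtilde)^2\bigr)\dHmt}(\xflow).
\]
Using $(\mflow\cdot\mflowtilde)^2=(\mom):(\mflowtilde\otimes\mflowtilde)$, Fubini's theorem (legitimate since $\theta\in\LpOS{1}$, the kernel is bounded on $\Sph\times\Sph$, and $\zeta_\mollEps$ is bounded with compact support), and the linearity of $\JeC$ in its argument together with the fact that $\mom$ is independent of $\xflow$ and hence may be pulled through the spatial mollifier entry-wise, I would rewrite this as
\[
\pot[\theta](\xflow,\mflow)=\potStr\,\Je{\intSph\theta(\cdot,\mflowtilde)\dHmt}(\xflow)-\potStr\,(\mom):\JeMat{\BMat}(\xflow),\qquad\BMat(\xflow):=\intSph\theta(\xflow,\mflowtilde)\,\mflowtilde\otimes\mflowtilde\dHmt .
\]

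Next I would apply $\nablag=(\IdMat-\mom)\nablam$. The first summand does not depend on $\mflow$ and drops out. For the second, the map $\mflow\mapsto(\mom):\JeMat{\BMat}(\xflow)$ extends naturally to a quadratic polynomial in $\mflow\in\R^d$, so $\nablam$ of it is the ordinary gradient $\bigl(\JeMat{\BMat}(\xflow)+\JeMat{\BMat}(\xflow)^{T}\bigr)\mflow$, which equals $2\,\JeMat{\BMat}(\xflow)\,\mflow$ because $\mflowtilde\otimes\mflowtilde$ is symmetric for every $\mflowtilde\in\Sph$ and entry-wise mollification preserves symmetry, so that $\JeMat{\BMat}(\xflow)$ is symmetric. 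Applying the tangential projection $(\IdMat-\mom)$ then gives
\[
\nablag\pot[\theta](\xflow,\mflow)=-2\potStr\,(\IdMat-\mom)\JeMat{\BMat}(\xflow)\,\mflow,
\]
which is precisely \eqref{eq:nabla_g_pot_formula}.

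The computation is elementary and I do not expect a genuine obstacle; the only points deserving care are that the spatial mollifier must be interchanged with the operations in $\mflow$ — sidestepped by first isolating the purely spatial symmetric matrix $\JeMat{\BMat}$ as above — and the standard fact that $(\IdMat-\mom)\nablam$ applied to a function on $\Sph$ is insensitive to the smooth extension chosen to compute it, so that the polynomial extension used above is admissible. All integrals occurring are finite by $\theta\in\LpOS{1}$, the boundedness of $\mflowtilde\mapsto\mflowtilde\otimes\mflowtilde$ and of the Maier--Saupe kernel on $\Sph$, and \eqref{mollifier:W_2_infty}.
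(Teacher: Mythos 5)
Your proposal is correct and follows exactly the route the paper indicates: the paper gives no written proof but states that the lemma ``follows directly from $\nablag = (\IdMat - \mom)\nablam$, $(\mflow\cdot\mflowtilde)^2 = (\mom):(\mflowtilde\otimes\mflowtilde)$, and \eqref{def:pot}'', which are precisely the three ingredients you spell out. Your additional care about symmetry of $\JeMat{\BMat}$ and the extension-independence of the surface gradient just makes explicit what the paper leaves implicit.
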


\subsection{Formal entropy estimate}
\label{sec:formal_entropy_estimate}
Now, let us derive an entropy estimate on a formal level which exploits the energetic and entropic structure of the model $(\mathcal{P})$. 
First, assuming that $\psi > 0$, multiplying \eqref{eq:active:psi} by $\muPsi$, integrating over $\OmSph$, integrating by parts in $\Omega$ and on $\Sph$ (see \eqref{eq:pInt:divg} and \eqref{eq:pInt:nablag}), and noting \eqref{eq:active:divu}-\eqref{eq:active:bc:u}, \eqref{eq:active:bc:psi}, and \eqref{mollifier:fg}, we have that
\begin{align}
\label{formal:psi:muPsi}
\begin{split}
&\ddt \intOmSph \left[ \entrop(\psi) + \frac{1}{2} \psi \pot[\psi] \right] \dH\dx \\
&\qquad + \frac{\mollEps^2}{\DE} \intOmSph \psi \abs{\nablax \muPsi}^2 \dH\dx + \frac{1}{\DE} \intOmSph \psi \abs{\nablag \muPsi}^2 \dH\dx \\
&\quad = - \intOmSph \uflow \cdot \nablax \psi \pot[\psi] \dH\dx \\
&\qquad + \intOmSph \psi \lrcb{ (\IdMat - \mom) \nablax\uflow \mflow } \cdot \nablag \muPsi \dHm\dx.
\end{split}
\end{align}

Next, multiplying \eqref{eq:active:u} by $\uflow$, integrating over $\Omega$, noting \eqref{eq:active:divu} and \eqref{eq:active:bc:u} and some integration by parts yield
\begin{align*}
&\frac{1}{2} \ddt \intOm \abs{\uflow}^2 \dx + \frac{\viscFrac}{\RE} \intOm \abs{\nablax \uflow}^2 \dx \\
&\qquad + \frac{1 - \viscFrac}{2 \RE} \intOmSph \psi \lrcb{ \nablax \uflow : \mom }^2  \dHm\dx \\
&\quad= - \frac{1 - \viscFrac}{\RE\DE} \intOmSph \psi \lrcb{ (\IdMat - \mom) \nablax \uflow \mflow } \cdot \nablag \muPsi \dHm\dx \\
&\quad\quad + \frac{1 - \viscFrac}{\RE\DE} \intOmSph \uflow \cdot \nablax \psi \pot[\psi] \dH\dx \numberthis[formal:u:u] \\
&\quad\quad - \frac{\alpha (1 - \viscFrac)}{\RE \DE} \intOmSph \psi \nablax \uflow : \mom \dHm\dx,
\end{align*}
where we used that
\begin{align}
\label{formal:stress_formula}
\begin{split}
&\intOmSph \psi \nablax\uflow : \lrcb{ \left[ (\IdMat - \mom) \nablag \muPsi \right] \otimes \mflow } \dHm\dx \\
&\quad = \intOmSph \psi \lrcb{ (\IdMat - \mom) \nablax \uflow \mflow} \cdot \nablag \muPsi \dHm\dx.
\end{split}
\end{align}

Now, we infer from multiplying \eqref{formal:psi:muPsi} by $\frac{1-\viscFrac}{\RE\DE}$, combining \eqref{formal:psi:muPsi} - \eqref{formal:u:u}, and applying Young's inequality that
\begin{align*}
&\frac{1}{2} \ddt \intOm \abs{\uflow}^2 \dx + \frac{\viscFrac}{\RE} \intOm \abs{\nablax \uflow}^2 \dx \\
&\qquad + \frac{1 - \viscFrac}{2 \RE} \intOmSph \psi \lrcb{\nablax\uflow : \mom}^2 \dHm\dx \\
&\qquad + \frac{1 - \viscFrac}{\RE\DE} \ddt \intOmSph \left[ \entrop(\psi) + \frac{1}{2}\psi\pot[\psi] \right] \dH\dx \\
&\qquad + \frac{1 - \viscFrac}{\RE\DE} \lrcb{ \frac{\mollEps^2}{\DE} \intOmSph \psi \abs{\nablax \muPsi}^2 \dH\dx + \frac{1}{\DE} \intOmSph \psi \abs{\nablag \muPsi}^2 \dH\dx } \\
&\quad= - \frac{\alpha (1 - \viscFrac)}{\RE \DE} \intOmSph \psi \nablax \uflow : \mom \dHm\dx \numberthis[formal:summarize_1] \\
&\quad\leq \frac{1 - \viscFrac}{4\RE} \intOmSph \psi \lrcb{\nablax\uflow : \mom}^2 \dHm\dx + \frac{\alpha^2 (1 - \viscFrac)}{\RE \DE^2} \intOmSph \psi \dH\dx.
\end{align*}

Furthermore, to deal with the last term in \eqref{formal:summarize_1}, we obtain from \eqref{eq:active:divu}-\eqref{eq:active:bc:u}, \eqref{eq:active:bc:psi}, \eqref{eq:pInt:divg}, and integrating \eqref{eq:active:psi} over $\OmSph$ that
\begin{align}
\label{formal:mass_conserv}
\intOmSph \psi(\xflow, \mflow, t) \dHm\dx = \intOmSph \psi_0(\xflow, \mflow) \dHm\dx
\end{align}
for $t \in (0, T]$.
Therefore, we conclude from \eqref{formal:summarize_1} and \eqref{formal:mass_conserv} that
\begin{align*}
&\ddt \left[ \frac{1}{2} \intOm \abs{\uflow}^2 \dx + \frac{1 - \viscFrac}{\RE\DE} \intOmSph \left[ \entrop(\psi) + \frac{1}{2}\psi\pot[\psi] \right] \dH\dx \right] \\
&\quad + \frac{\viscFrac}{\RE} \intOm \abs{\nablax \uflow}^2 \dx + \frac{1 - \viscFrac}{4\RE} \intOmSph \psi \lrcb{\nablax\uflow : \mom}^2 \dHm\dx \\
&\quad + \frac{1 - \viscFrac}{\RE\DE} \lrcb{ \frac{\mollEps^2}{\DE} \intOmSph \psi \abs{\nablax \muPsi}^2 \dH\dx + \frac{1}{\DE} \intOmSph \psi \abs{\nablag \muPsi}^2 \dH\dx } \\
&\leq \frac{\alpha^2 (1 - \viscFrac)}{\RE \DE^2} \intOmSph \psi_0 \dH\dx. \numberthis[formal:entropy_est]
\end{align*}

We note that there is no need for an absorption argument in \eqref{formal:summarize_1} in the passive case as the right-hand sides vanishes for $\activity = 0$.


\section{Discrete-in-time approximations}
\label{sec:discrete_in_time_approx}
To make the formal computations in Section \ref{sec:formal_entropy_estimate} rigorous, we introduce the following regularization functions: For any $\regL > 1$, we define
\begin{subequations}
	\label{def:regularizations}
	\begin{align}
	\begin{split}
	\label{def:regL}
	\entropL(s) &:= 
	\begin{cases}
	\entrop(s) \equiv s (\log(s) - 1) + 1 \quad & \text{ for } 0 \leq s \leq \regL, \\
	\frac{s^2 - \regL^2}{2 \regL} + s(\log(\regL) - 1) + 1 \quad & \text{ for } \regL\leq s,
	\end{cases}
	\end{split} \\
	\begin{split}
	\label{def:QL}
	\QL(s) &:=
	\begin{cases}
	s \quad & \text{ for } s \leq \regL, \\
	\regL \quad & \text{ for } \regL \leq s,
	\end{cases}
	\end{split} \\
	\begin{split}
	\label{def:QZL}
	\QZL(s) &:= \max\{ 0,\ \QL(s) \} \quad \text{ for } s \in \R.
	\end{split}
	\end{align}
\end{subequations}

Furthermore, we note that the regularization functions satisfy the following properties (cf. Chen and Liu \cite{chen_liu_2013_entropy_solution} and Barrett and S\"uli \cite{barrett_sueli_2011_polymers_I, barrett_sueli_2011_polymers_II}):
\begin{subequations}
	\label{reg:props}
	\begin{alignat}{2}
	\QL \in \C^{0,1}(\R),\quad \entropL &\in \C^{2,1}(\R_{>0}) \cap \C(\R_{\geq 0}), \\
	\entropL & \text{ is convex on } [0, \infty), \quad && \label{reg:F_cvx} \\
	\entropL(s) & \geq \entrop(s) \quad && \forall s \geq 0, \label{reg:FL_geq_F} \\
	\entropL\lrcb{\QL(s) + \regDelta} & \leq \regDelta + \frac{\regDelta^2}{2} + \entrop(s + \regDelta) \quad && \forall \regDelta \in (0,1), \forall s \geq 0, \label{reg:FLQL_leq_FL} \\
	\ddentropL(s) &= \lrcb{\QL(s)}^{-1} \geq \frac{1}{s} \quad &&\forall s > 0, \label{reg:ddFL_geq} \\
	\ddentrop(s + \regDelta) & \leq \frac{1}{\regDelta} \quad && \forall \regDelta \in (0,1),\ \forall s \geq 0, \label{reg:ddFL_leq} \\
	\entropL(s) &\geq \frac{s^2}{4\regL} - C(\regL) \quad && \forall s \geq 0, \label{reg:FL_geq_ss} \\
	\abs{\QZL(s)} &\leq \abs{\QL(s)} \leq \abs{s} \quad && \forall s \in \R. \label{reg:QL_leq}
	\end{alignat}
\end{subequations}

\textit{\textbf{Assumptions on the initial data and the domain.}}
We assume the domain and the initial data to satisfy
\begin{subequations}
	\label{assumptions:domain_init_data}
	\begin{align}
	\begin{split}
	\label{eq:domain}
	& \Omega \subset \R^d \text{ is an open bounded Lipschitz domain},\quad d \in \{ 2, 3\},
	\end{split}\\
	\begin{split}
	\label{init:psi_0}
	& \psi_0 \in \L^2(\Omega; \L^1(\Sph)), \quad \psi_0 \geq 0 \text{ a.e. in } \OmSph  \\
	& \qquad \text{ with } \entrop(\psi_0) \in \LpOS{1},
	\end{split}\\
	\begin{split}
	\label{init:u_0}
	\uflow_0 \in \Ldivp{2}.
	\end{split}
	\end{align}
\end{subequations}

Let a time $T > 0$ and a number of time steps $N \in \N$ be given. Then, we define a time increment $\tau > 0$ by $\tau := \frac{T}{N}$ and $t_n := n \tau$ for $n \in \{0, \dots, N\}$. Moreover, we define the initial velocity $\uL{0} \in \Hdiv$ for our discrete-in-time scheme as the weak solution of
\begin{align}
\label{init:def:u_L_0}
\intOm \left[ \uL{0} \cdot \wflow + \regL^{-1} \nablax \uL{0} : \nablax \wflow \right] \dx = \intOm \uflow_0 \cdot \wflow \dx \quad \forall \wflow \in \Hdiv.
\end{align}
We deduce from \eqref{init:def:u_L_0} that
\begin{align}
\label{init:regularity_u_L_0}
\intOm \left[ \abs{\uL{0}}^2 + \regL^{-1} \abs{\nablax \uL{0}}^2 \right] \dx \leq \intOm \abs{\uflow_0}^2 \dx \leq C < \infty,
\end{align}
where $C > 0$ is independent of $\tau$ and $\regL$. Furthermore, we set
\begin{align}
\label{init:def:psi_L_0}
\psiL{0} := \QL(\psi_0) \in \Zp{2}.
\end{align}
In particular, we infer from \eqref{init:psi_0}, \eqref{init:def:psi_L_0}, and \eqref{reg:QL_leq} that
\begin{align}
\label{init:regularity_psi_L_0}
\intOmSph \psiL{0} \dH\dx \leq \intOmSph \psi_0 \dH\dx \leq C < \infty,
\end{align}
where $C>0$ is independent of $\tau$ and $\regL$.
Moreover, we obtain from the definition of the interaction potential \eqref{def:pot}, the nonnegativity of the Maier-Saupe interaction kernel \eqref{def:interaction_kernel}, \eqref{init:psi_0}, \eqref{init:def:psi_L_0}, \eqref{reg:QL_leq}, \eqref{mollifier:L2_stab}, and H\"older's inequality that
\begin{align*}
&\intOmSph \psiL{0} \pot[\psiL{0}] \dH\dx \\
&\quad\leq C(\potStr, d) \intOm \lrcb{ \intSph \psi_0(\xflow, \mflow) \dHm }^2 \dx 
\leq C < \infty,	\numberthis[init:psi_0_pot_0]
\end{align*}
where $C > 0$ is independent of $\tau$ and $\regL$.

Our discrete-in-time problem $\PLtau$ reads as follows:\\
\hspace*{3ex} $\PLtau$ Let $\uL{0} \in \Hdiv$ and $\psiL{0} \in \Zp{2}$ be given by \eqref{init:def:u_L_0} and \eqref{init:def:psi_L_0}, respectively. Then, for $n \in \{1, \dots, N\}$, given $(\uL{n-1}, \psiL{n-1})$ $\in$ $\Hdiv \times \Zp{2}$, find $(\uL{n}, \psiL{n})$ $\in$ $\Hdiv \times (\HkOS{1} \cap \Zp{2})$ such that, for all $\wflow \in \Hdiv$ and $\theta \in \H^1(\Omega \times \Sph)$,
\begin{subequations}
	\label{def:PLtau}
	\begin{align}
	\begin{split}
	\label{eq:PLtau:u}
	& \RE \DE \intOm \frac{\uL{n} - \uL{n-1}}{\tau} \cdot \wflow \dx + \RE \DE \intOm \lrcb{ (\uL{n-1} \cdot \nablax) \uL{n} } \cdot \wflow \dx \\
	&\qquad + \viscFrac \DE \intOm \nablax \uL{n} : \nablax \wflow \dx \\
	&\qquad + \frac{(1 - \viscFrac) \DE}{2} \intOmSph \QZL(\psiL{n}) (\nablax \uL{n} : \mom) (\nablax \wflow : \mom) \dHm \dx \\
	&\quad = - (1 - \viscFrac) \intOmSph \psiL{n} \nablax \potL{n} \cdot \wflow \dH\dx  \\
	&\quad \quad - (1 - \viscFrac) \intOmSph \QZL(\psiL{n}) \lrcb{ (\IdMat - \mom) \nablax \wflow \mflow } \cdot \nablag \potL{n} \dHm\dx \\
	&\quad \quad - (1 - \viscFrac) \intOmSph \psiL{n} (d\mom - \IdMat) : \nablax \wflow \dHm\dx \\
	&\quad \quad - \activity(1 - \viscFrac) \intOmSph \QZL(\psiL{n}) \mom : \nablax \wflow \dHm\dx, 
	\end{split} \\
	\begin{split}
	\label{eq:PLtau:psi}
	&\intOmSph \frac{\psiL{n} - \psiL{n-1}}{\tau} \theta \dH\dx - \intOmSph \psiL{n} \uL{n} \cdot \nablax \theta \dH\dx \\
	&\quad + \frac{1}{\DE} \intOmSph \left[ \mollEps^2 \nablax \psiL{n} \cdot \nablax \theta + \nablag \psiL{n} \cdot \nablag \theta \right] \dH\dx \\
	&\quad + \frac{1}{\DE} \intOmSph \left[ \mollEps^2 \QZL(\psiL{n}) \nablax \potL{n} \cdot \nablax \theta + \QZL(\psiL{n}) \nablag \potL{n} \cdot \nablag \theta \right] \dH\dx \\
	&\quad - \intOmSph \QZL(\psiL{n}) \lrcb{(\IdMat - \mom) \nablax \uL{n} \mflow} \cdot \nablag \theta \dHm\dx = 0.
	\end{split}
	\end{align}
\end{subequations}

\begin{rmrk}
	\begin{enumerate}
		\item[i)] On noting \eqref{formal:stress_formula} and \eqref{eq:chen_stress_formula}, the elastic stress tensor in the Navier-Stokes equation can be rewritten as
		\begin{align*}
		& \intOmSph \Divx \lrcb{ \psi \left[ (\IdMat - \mom) \nablag \muPsi \right] \otimes \mflow } \cdot \wflow \dHm\dx \\
		&\quad = - \intOmSph \psi \lrcb{ (\IdMat - \mom) \nablax \wflow \mflow } \cdot \nablag \muPsi  \dHm\dx \\
		&\quad = - \intOmSph \psi (d \mom - \IdMat) : \nablax \wflow \dHm\dx \numberthis[rmrk:stress_tensor] \\
		&\quad \qquad - \intOmSph \psi \lrcb{ (\IdMat - \mom) \nablax \wflow \mflow } \cdot \nablag \pot[\psi] \dHm\dx
		\end{align*}
		for all $\wflow \in \Hdiv$.
		\item[ii)] For reasons of readability, we use the abbreviation
		\begin{align}
		\label{def:potL}
		\potL{n} := \frac{1}{2} \pot\left[ \psiL{n} + \psiL{n-1} \right] \qquad \text{ for } n \in \{1, \dots, N\}.
		\end{align}
	\end{enumerate}
\end{rmrk}

\subsection{Existence of solutions to \texorpdfstring{$\PLtau$}{}}
\label{sec:ex_of_sols_to_PLtau}
In this section, our goal is to prove the existence of a solution $(\uL{n}, \psiL{n})$ $\in$ $\Hdiv \times (\HkOS{1} \cap \Zp{2})$ to $\PLtau$ by applying Schaefer's fixed-point theorem (cf. Evans \cite{evans2010pdes}).
First of all, we prove a lemma which will be used very frequently.

\begin{lmm}
	Let $\theta \in \L^1(\OmSph)$. Then, it holds that
	\begin{align}
	\label{est:nabla_pot_Linfty}
	\norm{\pot[\theta]}_{\W^{1, \infty}(\OmSph)}
	\leq C(\CmollEps, \potStr, d) \norm{\theta}_{\L^1(\OmSph)},
	\end{align}
	where $\CmollEps > 0$ is the constant in \eqref{mollifier:W_2_infty}.
\end{lmm}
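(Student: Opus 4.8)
The plan is to prove the estimate \eqref{est:nabla_pot_Linfty} by unwinding the definition of the interaction potential and applying the mollifier bound \eqref{mollifier:W_2_infty} together with the fact that the Maier--Saupe kernel is bounded.

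First I would recall from \eqref{def:pot} and \eqref{def:interaction_kernel} that
\begin{align*}
\pot[\theta](\xflow, \mflow) = \potStr \, \Je{ \intSph \theta(\cdot, \mflowtilde)\, (1 - (\mflow\cdot\mflowtilde)^2)\, \dHmt }(\xflow),
\end{align*}
so that for each fixed $\mflow\in\Sph$ the function $\xflow \mapsto \pot[\theta](\xflow,\mflow)$ is the mollification $\JeC$ applied to the $\L^1(\Omega)$ function $h_\mflow(\xflow) := \intSph \theta(\xflow, \mflowtilde)(1 - (\mflow\cdot\mflowtilde)^2)\,\dHmt$. Since $\lvert 1 - (\mflow\cdot\mflowtilde)^2\rvert \leq 1$ for all $\mflow,\mflowtilde\in\Sph$, Tonelli's theorem gives $\norm{h_\mflow}_{\L^1(\Omega)} \leq \norm{\theta}_{\L^1(\OmSph)}$ uniformly in $\mflow$. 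Then \eqref{mollifier:W_2_infty} yields $\norm{\pot[\theta](\cdot,\mflow)}_{\W^{1,\infty}(\Omega)} \leq \potStr\, \CmollEps\, \norm{\theta}_{\L^1(\OmSph)}$ uniformly in $\mflow$, which controls the $\xflow$-derivatives and the sup-norm. For the $\mflow$-derivatives I would differentiate under the mollification integral: $\nablam$ (or $\nablag$) hits only the smooth factor $1 - (\mflow\cdot\mflowtilde)^2$, whose gradient in $\mflow$ is bounded by a dimensional constant, so the same argument applied to $\nabla_\mflow h_\mflow$ gives a bound of the form $C(d)\norm{\theta}_{\L^1(\OmSph)}$ in $\L^\infty(\Omega)$, and \eqref{mollifier:W_2_infty} is not even needed here since no $\xflow$-derivative is involved (the $\L^\infty$ stability of $\JeC$, contained in \eqref{mollifier:W_2_infty}, suffices). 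Alternatively, one can invoke Lemma \ref{lmm:nablag_g_pot_formula}, i.e. the explicit formula \eqref{eq:nabla_g_pot_formula} for $\nablag\pot[\theta]$, together with the tensor-valued analogue of \eqref{mollifier:W_2_infty} and $\lvert\IdMat - \mom\rvert, \lvert\mflow\otimes\mflow\rvert \leq C(d)$ and $\lvert\mflow\rvert = 1$, to read off the bound on $\nablag\pot[\theta]$ directly.

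Collecting the $\xflow$- and $\mflow$-estimates and taking the supremum over $\OmSph$ gives \eqref{est:nabla_pot_Linfty} with a constant depending only on $\CmollEps$, $\potStr$, and $d$. I do not expect any genuine obstacle here; the only mild subtlety is bookkeeping the two families of derivatives (in $\xflow$ and in $\mflow$) and making sure differentiation under the integral sign in \eqref{def:mollifier} is justified, which is immediate because $\zeta_\mollEps\in\C_0^\infty$ and the kernel $1-(\mflow\cdot\mflowtilde)^2$ is a polynomial in $\mflow$. The "hardest" part is merely being careful that the kernel's boundedness (value and $\mflow$-gradient both $\leq C(d)$) is what decouples the $\Sph$-integration from the $\Omega$-mollification.
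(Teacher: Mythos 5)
Your proposal is correct and follows essentially the same route as the paper: both arguments decouple the sphere integration from the mollification via the boundedness of the Maier--Saupe kernel and then invoke \eqref{mollifier:W_2_infty}, and both handle $\nablag\pot[\theta]$ via the explicit formula \eqref{eq:nabla_g_pot_formula} (the paper does exactly this; you offer it as your alternative). The only cosmetic difference is that the paper factors the $\mflow$-dependence out of the mollifier by writing $1-(\mflow\cdot\mflowtilde)^2 = 1 - (\mom):(\mflowtilde\otimes\mflowtilde)$ and applying the scalar and tensor-valued mollifiers to $\intSph\theta\dHmt$ and $\intSph\theta\,\mflowtilde\otimes\mflowtilde\dHmt$, whereas you keep the kernel inside and bound $\Je{h_\mflow}$ uniformly in $\mflow$.
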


\begin{proof}
	On noting the definition of the interaction potential \eqref{def:pot} and $(\mflow \cdot \mflowtilde)^2 = \mom : \mflowtilde \otimes \mflowtilde$ for $\mflow, \mflowtilde \in \Sph$, we have that
	\begin{align*}
		\pot[\theta](\xflow, \mflow)
		&= \potStr \intSph \Je{\theta(\cdot, \mflowtilde)}(\xflow) \lrcb{1 - (\mflow \cdot \mflowtilde)^2} \dHmt \\
		&= \potStr \Je{ \intSph \theta(\cdot, \mflowtilde) \dHmt }(\xflow) \\
		&\quad - \potStr \JeMat{ \intSph \theta(\cdot, \mflowtilde) \mflowtilde \otimes \mflowtilde \dHmt }(\xflow) : (\mom)
	\end{align*}
	for $(\xflow, \mflow) \in \OmSph$. Hence, we infer from \eqref{mollifier:W_2_infty} that
	\begin{align*}
		&\norm{\pot[\theta]}_{\LpOS{\infty}} + \norm{\nablax \pot[\theta]}_{\L^\infty(\OmSph)} \\
		&\quad\leq C(\CmollEps, \potStr) \norm{\theta}_{\LpOS{1}}  + C(\CmollEps, \potStr, d) \intOmSph \abs{\theta(\xflow, \mflowtilde)} \abs{\mflowtilde \otimes \mflowtilde} \dHmt\dx \\
		&\quad\leq C(\CmollEps, \potStr, d) \norm{\theta}_{\LpOS{1}}.
	\end{align*}
	Furthermore, we infer from \eqref{mollifier:W_2_infty} and \eqref{eq:nabla_g_pot_formula} that
	\begin{align*}
	\norm{ \nablag \pot[\theta] }_{\L^\infty(\OmSph)} 
	&\quad\leq C(d) \potStr \esssup_{\xflow \in \Omega} \abs{ \JeMat{ \intSph \theta(\cdot, \mflowtilde) \mflowtilde \otimes \mflowtilde \dHmt } (\xflow) } \\
	&\quad\leq C(\CmollEps, \potStr, d) \norm{\theta}_{\L^1(\OmSph)}.
	\end{align*}
\end{proof}

\textit{\textbf{Fixed-point iteration.}} To introduce our fixed-point scheme, we rewrite \eqref{eq:PLtau:u} as
\begin{align}
\label{fp:eq:a}
a(\psiL{n})(\uL{n}, \wflow) = k(\psiL{n})(\wflow) \quad \forall \wflow \in \Hdiv,
\end{align}
where
\begin{align}
\label{fp:def:a}
\begin{split}
&a(\psiov)(\uflow, \wflow)\\
&\quad:= \RE \DE \intOm \uflow \cdot \wflow \dx + \tau \RE \DE \intOm \lrcb{ (\uL{n-1} \cdot \nablax) \uflow } \cdot \wflow \dx \\
&\quad\quad + \tau \viscFrac \DE \intOm \nablax \uflow : \nablax \wflow \dx \\
&\quad\quad + \tau \frac{(1-\viscFrac) \DE}{2} \intOmSph \QZL(\psiov) (\nablax \uflow : \mom) (\nablax \wflow : \mom) \dHm \dx
\end{split}
\end{align}
and
\begin{align}
\label{fp:def:k}
\begin{split}
&k(\psiov)(\wflow)\\
&\quad:= \RE \DE \intOm \uL{n-1} \cdot \wflow \dx - \tau \frac{1 - \viscFrac}{2} \intOmSph \psiov \nablax \pot\left[\psiov + \psiL{n-1}\right] \cdot \wflow \dH\dx \\
&\quad\quad - \tau \frac{1 - \viscFrac}{2} \intOmSph \QZL(\psiov) \lrcb{ (\IdMat - \mom) \nablax \wflow \mflow } \cdot \nablag \pot\left[\psiov + \psiL{n-1}\right] \dHm\dx \\
&\quad\quad - \tau (1 - \viscFrac) \intOmSph \psiov (d\mom - \IdMat) : \nablax \wflow \dHm\dx \\
&\quad\quad - \tau \activity (1 - \viscFrac) \intOmSph \QZL(\psiov) \mom : \nablax \wflow \dHm\dx
\end{split}
\end{align}
for all $\uflow, \wflow \in \Hdiv$ and for all $\psiov \in \L^2(\OmSph)$.
Moreover, we rewrite \eqref{eq:PLtau:psi} as
\begin{align}
\label{fp:eq:b}
\begin{split}
b(\uL{n})(\psiL{n}, \theta) = l(\uL{n}, \psiL{n})(\theta) \qquad \forall \theta \in \HkOS{1},
\end{split}
\end{align}
where
\begin{align}
\label{fp:def:b}
\begin{split}
b(\uflow)(\psi, \theta)
&:= \intOmSph \psi \theta \dH\dx - \tau \intOmSph \psi \uflow \cdot \nablax \theta \dH\dx \\
&\quad + \frac{\tau}{\DE} \intOmSph \left[ \mollEps^2 \nablax \psi \cdot \nablax \theta + \nablag \psi \cdot \nablag \theta \right] \dH\dx
\end{split}
\end{align}
and
\begin{align}
\label{fp:def:l}
\begin{split}
l(\uflow, \psiov)(\theta)
&:= \intOmSph \psiL{n-1} \theta \dH\dx \\
&\quad - \frac{\tau}{2 \DE} \intOmSph \mollEps^2 \QZL(\psiov) \nablax \pot\left[\psiov + \psiL{n-1}\right] \cdot \nablax \theta \dH\dx \\
&\quad - \frac{\tau}{2 \DE} \intOmSph \QZL(\psiov) \nablag \pot\left[\psiov + \psiL{n-1}\right] \cdot \nablag \theta \dH\dx \\
&\quad + \tau \intOmSph \QZL(\psiov) \lrcb{ (\IdMat - \mom) \nablax \uflow \mflow } \cdot \nablag \theta \dHm\dx
\end{split}
\end{align}
for all $\psi, \theta \in \HkOS{1}$ and for all $\psiov \in \LpOS{2}$, $\uflow \in \Hdiv$.
\begin{lmm}
	\label{lmm:fixed_point_iter}
	Suppose that the assumptions \eqref{assumptions:domain_init_data} hold and let $\psiov \in \L^2(\OmSph)$.
	\begin{enumerate}
		\item[i)] There exists a unique $\uflow \in \Hdiv$ such that
		\begin{align}
		\label{fp:iteration:i}
		a(\psiov)(\uflow, \wflow) = k(\psiov)(\wflow) \quad \forall \wflow \in \Hdiv.
		\end{align}
		\item[ii)] Now, let $\uflow \in \Hdiv$ be the unique solution to \eqref{fp:iteration:i}. Then, there exists a unique $\psi$ $\in$ $\HkOS{1}$ such that
		\begin{align}
		\label{fp:iteration:ii}
		b(\uflow)(\psi, \theta) = l(\uflow, \psiov)(\theta) \quad \forall \theta \in \HkOS{1}.
		\end{align}
		\item[iii)] Moreover, solutions to \eqref{fp:iteration:i} and \eqref{fp:iteration:ii} satisfy
		\begin{align}
		\label{fp:iteration:iii:u}
		\begin{split}
			&\DE\, \min\{ \RE, \tau\viscFrac \} \norm{\uflow}_{\H^1(\Omega)} \\
			&\quad \leq C \lrcb{ 1 + \tau \regL + \tau \regL \norm{\psiov}_{\LpOS{2}} + \tau \norm{\psiov}_{\LpOS{2}}^2 },
		\end{split}
		\end{align}
		where $C = C(\RE, \DE, \uL{n-1}, \CmollEps, \potStr, d, \viscFrac, \activity, \psiL{n-1}) > 0$ and
		\begin{align}
		\label{fp:iteration:iii:psi}
		\begin{split}
			&\min\left\{ 1, \frac{\tau}{\DE}, \frac{\tau \mollEps^2}{\DE} \right\} \norm{\psi}_{\HkOS{1}} \\
			&\quad \leq C \lrcb{ 1 + \tau \regL + \tau \regL \norm{\psiov}_{\LpOS{1}} + \tau \regL \norm{\nablax \uflow}_{\L^2(\Omega)} },
		\end{split}
		\end{align}
		where $C = C(\psiL{n-1}, \CmollEps, \potStr, d, \mollEps, \DE^{-1}) > 0$.
	\end{enumerate}
\end{lmm}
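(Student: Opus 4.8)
The plan is to obtain i) and ii) from the Lax--Milgram theorem applied on the Hilbert spaces $\Hdiv$ and $\HkOS{1}$, respectively, and then iii) by testing the resulting variational identities against $\uflow$ and $\psi$. Throughout, the elementary facts I would use repeatedly are $0 \le \QZL(\psiov) \le \regL$ a.e.\ in $\OmSph$ (by \eqref{def:QZL}, \eqref{def:QL}), $\abs{\QZL(\psiov)} \le \abs{\psiov}$ (by \eqref{reg:QL_leq}), $\abs{\mom} = 1$, $\abs{(\IdMat - \mom)\nablax\uflow\,\mflow} \le C\abs{\nablax\uflow}$, and, crucially, $\pot[\psiov + \psiL{n-1}] \in \W^{1,\infty}(\OmSph)$ with norm $\le C(1 + \norm{\psiov}_{\L^1(\OmSph)})$ by \eqref{est:nabla_pot_Linfty} together with $\psiL{n-1} \in \L^2(\OmSph) \hookrightarrow \L^1(\OmSph)$.

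For i), I would verify that $a(\psiov)(\cdot,\cdot)$ from \eqref{fp:def:a} is bounded and coercive on $\Hdiv$ and that $k(\psiov) \in (\Hdiv)'$. Boundedness follows from the Cauchy--Schwarz inequality, $\QZL(\psiov) \in \L^\infty(\OmSph)$ and $\abs{\mom} = 1$ for all terms except the convective one $\RE\DE\tau\intOm ((\uL{n-1}\cdot\nablax)\uflow)\cdot\wflow\,\dx$, which is handled with H\"older's inequality and the Sobolev embedding $\Hflow_0^1(\Omega) \hookrightarrow \Lflow^4(\Omega)$ (valid for $d \in \{2,3\}$). Coercivity is where the structure enters: on testing with $\wflow = \uflow$, the convective term vanishes since $\uL{n-1} \in \Hdiv$ is divergence-free and vanishes on $\partial\Omega$, so $\intOm ((\uL{n-1}\cdot\nablax)\uflow)\cdot\uflow\,\dx = \tfrac12\intOm \uL{n-1}\cdot\nablax\abs{\uflow}^2\,\dx = 0$ (justified by density of $\mathscr{V}$ in $\Hdiv$), the $\QZL$-term is nonnegative because $\QZL \ge 0$ and $1 - \viscFrac > 0$, and the mass and solvent-viscosity terms give $a(\psiov)(\uflow,\uflow) \ge \DE\min\{\RE, \tau\viscFrac\}\norm{\uflow}_{\H^1(\Omega)}^2$. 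Finally, $k(\psiov) \in (\Hdiv)'$ follows from the bounds above, using $\nablax\pot, \nablag\pot \in \L^\infty(\OmSph)$ for the potential terms and $\psiov \in \L^2(\OmSph)$, $\QZL(\psiov) \le \regL$ for the others. Lax--Milgram then yields the unique $\uflow$.

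For ii), the argument is parallel on $\HkOS{1}$. In checking boundedness of $b(\uflow)(\cdot,\cdot)$ from \eqref{fp:def:b}, the only non-routine term is the convective one $\tau\intOmSph \psi\uflow\cdot\nablax\theta\,\dHm\dx$, which I would bound by $\norm{\psi\uflow}_{\L^2(\OmSph)}\norm{\nablax\theta}_{\L^2(\OmSph)}$ and then, via Fubini and the embeddings $\H^1(\Omega)\hookrightarrow\L^6(\Omega)$ and $\Hflow_0^1(\Omega)\hookrightarrow\Lflow^3(\Omega)$, by $C\norm{\uflow}_{\H^1(\Omega)}\norm{\psi}_{\HkOS{1}}\norm{\theta}_{\HkOS{1}}$. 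Coercivity again hinges on this term vanishing on the diagonal: for a.e.\ fixed $\mflow$ one has $\intOm \psi\uflow\cdot\nablax\psi\,\dx = \tfrac12\intOm \uflow\cdot\nablax(\psi^2)\,\dx = 0$ by divergence-freeness and no-slip, so $b(\uflow)(\psi,\psi) \ge \min\{1, \tau\mollEps^2/\DE, \tau/\DE\}\norm{\psi}_{\HkOS{1}}^2$; moreover $l(\uflow,\psiov) \in (\HkOS{1})'$ by \eqref{est:nabla_pot_Linfty}, $\QZL(\psiov) \le \regL$, and $\uflow \in \Hdiv$. Lax--Milgram then produces the unique $\psi$.

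For iii), I would put $\wflow = \uflow$ into \eqref{fp:iteration:i}: the left-hand side is $\ge \DE\min\{\RE, \tau\viscFrac\}\norm{\uflow}_{\H^1(\Omega)}^2$ by the coercivity computation, while $k(\psiov)(\uflow)$ is estimated term by term so as to extract one factor $\norm{\uflow}_{\H^1(\Omega)}$: the $\uL{n-1}$-term gives the constant; the terms passing through $\QZL(\psiov)$ are controlled by $\QZL(\psiov) \le \regL$ and contribute $\tau\regL(1 + \norm{\psiov}_{\L^1(\OmSph)})$; the terms containing $\psiov$ directly (together with $\nablax\pot$ where it occurs) contribute $\tau\norm{\psiov}_{\L^2(\OmSph)}(1 + \norm{\psiov}_{\L^2(\OmSph)})$; absorbing bare factors via $\tau\norm{\psiov}_{\L^2(\OmSph)} \le C(1 + \tau\norm{\psiov}_{\L^2(\OmSph)}^2)$ (using $\tau \le T$), dividing by the coercivity constant, and using $\norm{\psiov}_{\L^1(\OmSph)} \le C\norm{\psiov}_{\L^2(\OmSph)}$ gives \eqref{fp:iteration:iii:u}. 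Analogously, testing \eqref{fp:iteration:ii} with $\theta = \psi$, bounding $l(\uflow,\psiov)(\psi)$ with $\QZL(\psiov) \le \regL$, \eqref{est:nabla_pot_Linfty}, and $\abs{(\IdMat - \mom)\nablax\uflow\,\mflow} \le C\abs{\nablax\uflow}$, and dividing by $\min\{1, \tau\mollEps^2/\DE, \tau/\DE\}$ yields \eqref{fp:iteration:iii:psi}. The main difficulty is organizational rather than conceptual: verifying the two convective terms vanish on the diagonal with only $\H^1$-regularity (a short density argument) and, above all, tracking for every right-hand side term whether it should be bounded by $\regL$ (estimating $\QZL(\psiov)$ pointwise) or by $\norm{\psiov}$ (leaving $\psiov$ untouched), so that the a priori bounds emerge in exactly the stated form.
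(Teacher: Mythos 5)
Your proposal follows essentially the same route as the paper: Lax--Milgram for both subproblems, with coercivity obtained from the vanishing of the convective terms on the diagonal and nonnegativity of the $\QZL$-weighted terms, boundedness of the right-hand side functionals via $\QZL(\psiov)\leq\regL$, \eqref{reg:QL_leq}, and \eqref{est:nabla_pot_Linfty}, and part iii) by testing with $\uflow$ and $\psi$ and dividing by the coercivity constants. The only cosmetic differences (the $\Lflow^4$ versus $\Lflow^6$ embedding for the convective term, and absorbing $\tau\norm{\psiov}_{\LpOS{2}}$ via Young rather than via $\regL>1$) do not affect correctness.
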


\begin{defi}[Fixed-point iteration]
	\label{def:fixed_point_iteration}
	We define the operator $\Xi: \LpOS{2} \to \LpOS{2}$ as follows: For given $\psiov \in \LpOS{2}$, we introduce $\Xi[\psiov] \in \H^1(\Omega \times \Sph)$ as the solution of the variational problem \eqref{fp:iteration:ii}, where $\uflow \in \Hdiv$ is the solution of the variational problem \eqref{fp:iteration:i}.
\end{defi}

\begin{rmrk}
	We infer from Lemma \ref{lmm:fixed_point_iter} that the operator $\Xi$ defined in Definition \ref{def:fixed_point_iteration} is well defined.
	Clearly, if $\psi \in \HkOS{1}$ is a fixed-point of $\Xi$, then the tuple $(\uL{n}, \psiL{n})$ $:=$ $(\uflow, \psi)$ is a solution to \eqref{def:PLtau}. Hence, proving the existence of a fixed-point of $\Xi$ is equivalent to proving the existence of a solution to \eqref{def:PLtau}.
\end{rmrk}


\begin{proof}[Proof of Lemma \ref{lmm:fixed_point_iter}]
	Let $\psiov \in \L^2(\OmSph)$.\\
	\textit{Proof of i).}
	For all $\uflow \in \Hdiv$, on noting
	\begin{align*}
	\intOm \lrcb{ (\uL{n-1} \cdot \nablax) \uflow } \cdot \uflow \dx = 0,
	\end{align*}
	we have that
	\begin{align}
	\label{fp:a_coercive}
	\begin{split}
	a(\psiov)(\uflow, \uflow)
    \geq \DE \, \min\{ \RE, \tau \viscFrac \} \norm{\uflow}_{\H^1(\Omega)}^2.
	\end{split}
	\end{align}
	
	Moreover, we infer from $\H^1(\Omega) \hookrightarrow \Lflow^6(\Omega)$ that, for all $\uflow, \wflow \in \Hdiv$,
	\begin{align}
	\label{fp:a_bounded}
	\begin{split}
	\abs{a(\psiov)(\uflow, \wflow)}
	\leq C\lrcb{ \tau, \RE, \DE, \gamma, d, \uL{n-1}, \regL } \norm{\uflow}_{\H^1(\Omega)} \norm{\wflow}_{\H^1(\Omega)}.
	\end{split}
	\end{align}
	
	Furthermore, we obtain from \eqref{reg:QL_leq} and \eqref{est:nabla_pot_Linfty} that
	\begin{align}
	\label{fp:k_bounded}
	\begin{split}
	\abs{k(\psiov)(\wflow)}
    \leq C \lrcb{1 + \tau \regL + \tau \norm{\psiov}_{\LpOS{2}}^2 + \tau\regL \norm{\psiov}_{\LpOS{2}}} \norm{\wflow}_{\H^1(\Omega)},
	\end{split}
	\end{align}
	where $C = C \lrcb{ \RE, \DE, \uL{n-1}, \CmollEps, \potStr, d, \viscFrac, \activity, \psiL{n-1} } > 0$ and $\CmollEps > 0$ is the constant in \eqref{mollifier:W_2_infty}.
	We conclude from \eqref{fp:a_coercive}-\eqref{fp:a_bounded} that $a(\psiov)(\cdot, \cdot)$ is a bounded, coercive bilinear form on $\Hdiv \times \Hdiv$ and from \eqref{fp:k_bounded} that $k(\psiov)(\cdot) \in (\Hdiv)^\prime$. Hence, we obtain from Lax-Milgram's theorem the existence of a unique solution $\uflow \in \Hdiv$ to \eqref{fp:iteration:i}.

	\textit{Proof of ii).} Let $\uflow \in \Hdiv$ be the unique solution to \eqref{fp:iteration:i}. For all $\psi \in \H^1(\Omega \times \Sph)$, 
	on noting
	\begin{align*}
	\intOmSph \psi \uflow \cdot \nablax \psi \dH\dx = \intOmSph \frac{1}{2} \uflow \cdot \nablax (\psi^2) \dH\dx = 0,
	\end{align*}
	we have that
	\begin{align}
	\label{fp:b_coercive}
	\begin{split}
	b(\uflow)(\psi, \psi)
	\geq \min \left\{ 1, \frac{\tau}{\DE}, \frac{\tau}{\DE}\mollEps^2 \right\} \norm{\psi}_{\HkOS{1}}^2.
	\end{split}
	\end{align}
	
	Furthermore, we infer from $\HkOS{1} \hookrightarrow \LpOS{3}$ that
	\begin{align}
	\label{fp:b_bounded}
	\begin{split}
	\abs{b(\uflow)(\psi, \theta)}
	\leq \lrcb{ 1 + \frac{\tau}{\DE} + \frac{\tau}{\DE}\mollEps^2 + C(d) \tau \norm{\uflow}_{\L^6(\Omega)} } \norm{\psi}_{\HkOS{1}} \norm{\theta}_{\HkOS{1}}
	\end{split}
	\end{align}
	for all $\psi, \theta \in$ $\H^1(\Omega$ $\times\Sph)$.
	Moreover, we obtain from \eqref{est:nabla_pot_Linfty} that
	\begin{align}
	\label{fp:l_bounded}
	\begin{split}
	\abs{l(\uflow, \psiov)(\theta)}
	\leq C \Big( 1 + \tau \regL + \tau \regL \norm{\psiov}_{\LpOS{1}} + \tau \regL \norm{\nablax \uflow}_{\L^2(\Omega)} \Big) \norm{\theta}_{\HkOS{1}},
	\end{split}
	\end{align}
	where $C = C(\psiL{n-1}, \CmollEps, \potStr, d, \mollEps, \DE^{-1}) > 0$.
	Consequently, on noting \eqref{fp:b_coercive}-\eqref{fp:l_bounded}, it follows that $b(\uflow)(\cdot, \cdot)$ is a bounded, coercive bilinear form on $\H^1(\Omega \times \Sph)$ $\times$ $\HkOS{1}$ and that $l(\uflow, \psiov)(\cdot)$ $\in$ $\lrcb{\HkOS{1}}^\prime$. Hence, applying Lax-Milgram's theorem completes the proof of \textit{ii)}.

	\textit{Proof of iii).}
	On noting \eqref{fp:a_coercive}, \eqref{fp:k_bounded}, \eqref{fp:b_coercive}, and \eqref{fp:l_bounded}, it follows immediately that \eqref{fp:iteration:iii:u} and \eqref{fp:iteration:iii:psi} hold.
\end{proof}

As proving the existence of a fixed-point of $\Xi$ is equivalent to proving the existence of a solution to \eqref{def:PLtau}, let us show the following lemma:
\begin{lmm}[Existence of a fixed-point]
	\label{lmm:existence_fp}
	Suppose that the assumptions \eqref{assumptions:domain_init_data} hold.
	Then, the operator $\Xi: \LpOS{2} \to \LpOS{2}$ has a fixed-point.
\end{lmm}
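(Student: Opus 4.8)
The plan is to verify the hypotheses of Schaefer's fixed-point theorem for $\Xi\colon\LpOS2\to\LpOS2$. Well-definedness of $\Xi$ is Lemma~\ref{lmm:fixed_point_iter}, so it remains to check that (i)~$\Xi$ maps bounded sets into relatively compact sets, (ii)~$\Xi$ is continuous, and (iii)~the set $S:=\{\psi\in\LpOS2\colon\psi=\sigma\,\Xi[\psi]\text{ for some }\sigma\in[0,1]\}$ is bounded in $\LpOS2$. Point~(i) is immediate from the a~priori bounds already at hand: if $\norm{\psiov}_{\LpOS2}\le R$, then \eqref{fp:iteration:iii:u} bounds the associated velocity $\uflow$ in $\Hdiv$ and \eqref{fp:iteration:iii:psi} then bounds $\Xi[\psiov]$ in $\HkOS1=\H^1(\OmSph)$; since $\Omega$ is a bounded Lipschitz domain and $\Sph$ is compact, $\H^1(\OmSph)\hookrightarrow\LpOS2$ compactly, so bounded sets go to relatively compact ones.

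For~(ii) I would take $\psiov_k\to\psiov$ in $\LpOS2$ and argue by weak compactness together with the uniqueness in Lemma~\ref{lmm:fixed_point_iter}. Along a subsequence the velocities $\uflow_k$ solving \eqref{fp:iteration:i} satisfy $\uflow_k\weak\uflow^\ast$ in $\Hdiv$, hence (Rellich) $\uflow_k\to\uflow^\ast$ in $\Lflow^q(\Omega)$ for every $q<6$, and $\psi_k:=\Xi[\psiov_k]\weak\psi^\ast$ in $\H^1(\OmSph)$ with $\psi_k\to\psi^\ast$ in $\LpOS2$; passing to a further subsequence, $\psiov_k\to\psiov$ a.e.\ in $\OmSph$. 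As $\QZL$ is bounded by $\regL$ and continuous, dominated convergence gives $\QZL(\psiov_k)\to\QZL(\psiov)$ in $\LpOS p$ for all $p<\infty$; as $\JeC$ and $\JeMatC$ are bounded and continuous from $\L^1(\Omega)$ into $\W^{1,\infty}(\Omega)$, the representation of $\pot[\,\cdot\,]$ from the proof of \eqref{est:nabla_pot_Linfty} gives $\pot[\psiov_k+\psiL{n-1}]\to\pot[\psiov+\psiL{n-1}]$ in $\W^{1,\infty}(\OmSph)$ and $\nablag\pot[\psiov_k+\psiL{n-1}]\to\nablag\pot[\psiov+\psiL{n-1}]$ in $\L^\infty(\OmSph)$. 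With these convergences one passes to the limit in \eqref{fp:iteration:i} and \eqref{fp:iteration:ii}: linear terms against fixed test functions pass by weak convergence, and every product of a weakly convergent ($\H^1$) factor with a strongly or uniformly convergent one passes as well --- in particular $\psi_k\uflow_k\to\psi^\ast\uflow^\ast$ in $\LpOS2$, because $\H^1(\OmSph)$ embeds into $\LpOS{q_0}$ for some $q_0>2$ and $\Hflow^1(\Omega)\hookrightarrow\Lflow^6(\Omega)$, so the relevant H\"older exponents sum to less than $\tfrac12$ in both $d=2$ and $d=3$. Thus $(\uflow^\ast,\psi^\ast)$ solve \eqref{fp:iteration:i}--\eqref{fp:iteration:ii} with datum $\psiov$, whence $\uflow^\ast=\uflow(\psiov)$ and $\psi^\ast=\Xi[\psiov]$; since every subsequence of $(\Xi[\psiov_k])$ has a further subsequence converging to $\Xi[\psiov]$ in $\LpOS2$, the whole sequence does.

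The hard part is~(iii). Fix $\psi=\sigma\,\Xi[\psi]$ with $\sigma\in(0,1]$ (for $\sigma=0$, $\psi=0$), write $\hat\psi:=\Xi[\psi]=\sigma^{-1}\psi$ and $\uflow:=\uflow(\psi)$, so that $a(\psi)(\uflow,\cdot)=k(\psi)(\cdot)$ on $\Hdiv$ and $b(\uflow)(\hat\psi,\cdot)=l(\uflow,\psi)(\cdot)$ on $\HkOS1$. Testing the Smoluchowski relation with $\theta\equiv1$ makes all flux terms drop and gives $\intOmSph\hat\psi\,\dHm\dx=\intOmSph\psiL{n-1}\,\dHm\dx=:M$, hence $\intOmSph\psi\,\dHm\dx=\sigma M\le M$; testing with $\theta=\negat{\hat\psi}$ and using that $\QZL$ vanishes on $(-\infty,0]$ --- so every $\QZL(\psi)=\QZL(\sigma\hat\psi)$-term vanishes on the support of $\nablax\negat{\hat\psi}$ and $\nablag\negat{\hat\psi}$ --- together with $\psiL{n-1}\ge0$ forces $\negat{\hat\psi}=0$, i.e.\ $\psi\ge0$ a.e. For $\sigma$ below a threshold depending only on $\tau,\regL$ and the data (obtained by feeding \eqref{fp:iteration:iii:u} into \eqref{fp:iteration:iii:psi} and using $\norm{\psi}_{\LpOS2}=\sigma\norm{\hat\psi}_{\LpOS2}\le\sigma\norm{\hat\psi}_{\HkOS1}$), the coercivity of $b$ alone yields $\norm{\hat\psi}_{\HkOS1}\le C$ and hence $\norm{\psi}_{\LpOS2}\le C$; for $\sigma$ bounded away from $0$, I would reproduce the time-discrete counterpart of the formal entropy estimate \eqref{formal:entropy_est} by testing the velocity relation with $\uflow$ and the Smoluchowski relation with the regularized chemical potential $\dentropL(\hat\psi)+\sigma\potL{n}$ --- first with $\dentropL(\hat\psi+\regDelta)$ to avoid the logarithmic singularity at $0$, deriving a bound uniform in $\regDelta\in(0,1)$ with \eqref{reg:F_cvx}, \eqref{reg:FLQL_leq_FL}, \eqref{reg:ddFL_leq} and monotone/dominated convergence, then letting $\regDelta\searrow0$ --- and adding the two identities with weights $1$ and $1-\viscFrac$. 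Exactly as in Section~\ref{sec:formal_entropy_estimate} (cf.\ \eqref{formal:stress_formula} and the passage \eqref{formal:psi:muPsi}--\eqref{formal:summarize_1}), the transport term then cancels the body-force term and the rotational term cancels the elastic-stress term up to corrections proportional to $1-\sigma$; the identities $\ddentropL(s)=\QL(s)^{-1}$ and $\QZL(\sigma s)\,\ddentropL(s)\in[\sigma,1]$ for $s>0$ keep the principal diffusion part nonnegative; the interaction-potential contributions are controlled via \eqref{mollifier:fg}, the symmetry of the Maier--Saupe kernel \eqref{def:interaction_kernel} and $\pot[\,\cdot\,]\ge0$; and the $(1-\sigma)$-corrections, the $\QZL$-viscous term and the active source are all absorbed or bounded by $\sigma$-independent constants using the mass bound $\norm{\hat\psi}_{\L^1}=M$, the bound \eqref{fp:iteration:iii:u}, the dissipation $\ddentropL(\hat\psi)\,|\nablax\hat\psi|^2\ge\regL^{-1}|\nablax\hat\psi|^2$, and the fact that the cut-off makes $\norm{\nablax\potL{n}}_{\L^\infty}$ and $\norm{\QZL(\psi)}_{\LpOS2}$ bounded. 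What survives on the left is $\intOmSph[\entropL(\hat\psi)+\tfrac12\hat\psi\,\pot[\hat\psi]]\,\dHm\dx$ plus nonnegative dissipation, bounded by a constant depending only on $\tau,\regL$ and $(\uL{n-1},\psiL{n-1})$; since $\pot[\hat\psi]\ge0$ and $\entropL(s)\ge\tfrac{s^2}{4\regL}-C(\regL)$ by \eqref{reg:FL_geq_ss}, this gives $\norm{\hat\psi}_{\LpOS2}^2\le C$, hence $\norm{\psi}_{\LpOS2}\le C$, uniformly in $\sigma$. Then $S$ is bounded and Schaefer's theorem applies. The obstacle is precisely this last estimate: transplanting the energy--dissipation cancellations to the regularized, cut-off discrete system, taming the logarithmic singularity of $\dentropL$, and keeping every constant independent of the homotopy parameter.
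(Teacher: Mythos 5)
Your proposal follows essentially the same route as the paper: Schaefer's theorem with the same three conditions, the same weak-compactness-plus-uniqueness argument for continuity, compactness from the a priori bounds \eqref{fp:iteration:iii:u}--\eqref{fp:iteration:iii:psi} and $\HkOS{1}\hookrightarrow\hookrightarrow\LpOS{2}$, and for the homotopy set the same mechanism of mimicking the formal entropy estimate with a $\regDelta$-regularized chemical potential, nonnegativity and mass conservation, the cancellations via \eqref{eq:chen_stress_formula}, absorption of the active term, and the lower bound \eqref{reg:FL_geq_ss}; working with $\hat\psi=\Xi[\psi]$ instead of $\psi=\lambda\Xi[\psi]$ is only a rescaling of the paper's computation.

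One caveat: your case distinction in (iii) is both unnecessary and, in its first branch, not closing. Feeding \eqref{fp:iteration:iii:u} into \eqref{fp:iteration:iii:psi} yields an inequality of the form $X\leq C\lrcb{1+\sigma X+\sigma^{2}X^{2}}$ for $X=\norm{\hat\psi}_{\HkOS{1}}$, and the quadratic term cannot be absorbed for any fixed small $\sigma$ (the inequality admits solutions $X\sim C^{-1}\sigma^{-2}$), so ``coercivity of $b$ alone'' does not bound $\hat\psi$ for small $\sigma$ --- this is exactly the failure the paper points out in \eqref{fp:ex:iii:fail}. Fortunately the branch is redundant: the entropy estimate you describe is, as in the paper, uniform for all $\sigma\in(0,1]$, because the coefficient of $\intOmSph\entropL(\hat\psi+\regDelta)\dH\dx$ on the left does not degenerate as $\sigma\searrow 0$ and all right-hand-side data are controlled by $\sigma$-independent constants; so you should simply run it for every $\sigma$ and drop the threshold argument.
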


\begin{proof}
	The idea to prove this lemma is applying Schaefer's fixed-point theorem (cf. Evans \cite{evans2010pdes}).
	Before doing so however we need to verify the following three conditions:
	\begin{enumerate}
		\item[i)] $\Xi: \LpOS{2} \to \LpOS{2}$ is continuous.
		\item[ii)] $\Xi: \LpOS{2} \to \LpOS{2}$ is compact.
		\item[iii)] The set
		\begin{align*}
		\left\{ \psi \in \LpOS{2}:\ \psi = \lambda \Xi(\psi) \text{ for some } 0 < \lambda \leq 1 \right\}
		\end{align*}
		is bounded in $\LpOS{2}$.
	\end{enumerate}
	
	\textit{Proof of i).} Let $\psi, \psi_k \in \LpOS{2}$, $k \in \N$, such that
	\begin{subequations}
		\begin{align}
		\begin{split}
		\label{fp:ex:i:cvg:psiov_k}
		\psi_k \to \psi \quad \text{ in } \LpOS{2} \text{ as } k \to \infty.
		\end{split}
		\end{align}
	\end{subequations}
	
	Our goal is to show that
	\begin{align}
	\label{fp:ex:i:goal}
	\Xi[\psi_k] \to \Xi[\psi] \quad \text{ in } \LpOS{2} \text{ as } k \to \infty.
	\end{align}
	
	First, we infer from Lemma \ref{lmm:fixed_point_iter} and the definition of $\Xi$ that there exist unique $\uflow, \uflow_k$ $\in$ $\Hdiv$, $k \in \N$, such that
	\begin{subequations}
		\begin{alignat}{2}
		a(\psi)(\uflow, \wflow) &= k(\psi)(\wflow) \quad && \forall \wflow \in \Hdiv, \label{fp:ex:i:a_u}\\
		a(\psi_k)(\uflow_k, \wflow) &= k(\psi_k)(\wflow) \quad && \forall \wflow \in \Hdiv,\ \forall k \in \N, \label{fp:ex:i:a_uk}\\
		b(\uflow)(\Xi[\psi], \theta) &= l(\uflow, \psi)(\theta) \quad && \forall \theta \in \HkOS{1}, \label{fp:ex:i:b_psi}\\
		b(\uflow_k)(\Xi[\psi_k], \theta) &= l(\uflow_k, \psi_k)(\theta) \quad && \forall \theta \in \HkOS{1},\ \forall k \in \N. \label{fp:ex:i:b_psi_k}
		\end{alignat}
	\end{subequations}
	
	Now, let us derive uniform bounds on $\{\Xi[\psi_k]\}_{k \in \N}$ in $\HkOS{1}$ and on $\{ \uflow_k \}_{k \in \N}$ in $\Hdiv$ to extract converging subsequences in appropriate spaces.
	Taking $\theta := \Xi[\psi_k]$ $\in$ $\HkOS{1}$ as a test function in \eqref{fp:ex:i:b_psi_k} and applying \eqref{fp:iteration:iii:psi}, we have that
	\begin{align}
	\label{fp:ex:i:psi_k_bound_2}
	\norm{\Xi[\psi_k]}_{\HkOS{1}}
	\leq C \lrcb{ 1 + \norm{\psi_k}_{\LpOS{1}} + \norm{\nablax \uflow_k}_{\L^2(\Omega)} },
	\end{align}
	where $C = C\lrcb{ \psiL{n-1}, \CmollEps, \potStr, d, \regL, \tau, \tau^{-1}, \DE, \DE^{-1}, \mollEps, \mollEps^{-1} } > 0$.
	So far, we do not control the right-hand side of \eqref{fp:ex:i:psi_k_bound_2}.
	Consequently, it remains to derive uniform bounds on $\{ \uflow_k \}_{k \in \N}$ in $\Hdiv$.
	Taking $\wflow := \uflow_k$ $\in$ $\Hdiv$ as a test function in \eqref{fp:ex:i:a_uk}, we deduce from \eqref{fp:iteration:iii:u} that
	\begin{align}
	\label{fp:ex:i:uk_bound_1}
	\begin{split}
	&\DE \, \min \left\{ \RE, \tau \viscFrac \right\} \norm{\uflow_k}_{\H^1(\Omega)}^2 \\
	&\quad \leq C \lrcb{1 + \norm{\psi_k}_{\LpOS{2}} + \norm{\psi_k}_{\LpOS{2}}^2} \norm{\uflow_k}_{\H^1(\Omega)},
	\end{split}
	\end{align}
	where $C = C(\RE, \DE, \uL{n-1}, \CmollEps, \potStr, d, \tau, \viscFrac, \regL, \activity, \psiL{n-1}) > 0$.
	Moreover, we obtain from \eqref{fp:ex:i:cvg:psiov_k} that there exists a constant $C > 0$ independent of $k$ such that
	\begin{align}
	\label{fp:ex:i:psiovk_bound}
	\norm{\psi_k}_{\LpOS{2}} \leq C \quad \forall k \in \N.
	\end{align}
	Therefore, we deduce from \eqref{fp:ex:i:uk_bound_1} and \eqref{fp:ex:i:psiovk_bound} that
	\begin{align}
	\label{fp:ex:i:uk_bound_2}
	\norm{\uflow_k}_{\H^1(\Omega)}
	\leq C,
	\end{align}
	where $C = C\lrcb{ \RE, \RE^{-1}, \DE, \DE^{-1}, \viscFrac, \viscFrac^{-1}, \tau, \tau^{-1}, \uL{n-1}, \CmollEps, \potStr, d, \regL, \activity, \psiL{n-1} } > 0$ is independent of $k$.
	Hence, inserting \eqref{fp:ex:i:psiovk_bound} and \eqref{fp:ex:i:uk_bound_2} in \eqref{fp:ex:i:psi_k_bound_2} yields
	\begin{align}
	\label{fp:ex:i:psi_k_bound_3}
	\norm{\Xi[\psi_k]}_{\HkOS{1}}
	\leq  C,
	\end{align}
	where $C = C\lrcb{ \uL{n-1}, \psiL{n-1}, \RE, \RE^{-1}, \DE, \DE^{-1}, \tau, \tau^{-1}, \viscFrac, \viscFrac^{-1}, \mollEps, \mollEps^{-1}, \CmollEps, \regL, \activity, \potStr, d } > 0$ is independent of $k$.
	Therefore, on noting \eqref{fp:ex:i:uk_bound_2}, \eqref{fp:ex:i:psi_k_bound_3}, the Lipschitz continuity of $\QZL$, and the compact embeddings 
	\begin{align*}
	\HkOS{1} \hookrightarrow \hookrightarrow \LpOS{2} \quad \text{ and } \quad \Hdiv \hookrightarrow \hookrightarrow \Lflow^2(\Omega),
	\end{align*}
	we deduce that there exists a subsequence $\{ \uflow_{k_i}, \Xi[\psi_{k_i}] \}_{i \in \N}$ of $\{ \uflow_k, \Xi[\psi_k] \}_{k \in \N}$ and functions $\uflowhat \in \Hdiv$ and $\psihat \in \HkOS{1}$ such that
	\begin{subequations}
		\begin{alignat}{2}
		\Xi[\psi_{k_i}] &\to \psihat \quad &&\text{ strongly in } \LpOS{2}, \label{fp:ex:i:cvg:psi_ki_strong} \\
		\Xi[\psi_{k_i}] &\weak \psihat \quad &&\text{ weakly in } \HkOS{1}, \label{fp:ex:i:cvg:psi_ki_weak}\\
		\QZL(\psi_{k_i}) &\to \QZL(\psi) \quad &&\text{ strongly in } \LpOS{2}, \label{fp:ex:i:cvg:QZL_psiov_ki}\\
		\uflow_{k_i} & \to \uflowhat \quad &&\text{ strongly in } \Lflow^2(\Omega), \label{fp:ex:i:cvg:u_ki_strong}\\
		\uflow_{k_i} & \weak \uflowhat \quad &&\text{ weakly in } \Hdiv \label{fp:ex:i:cvg:u_ki_weak}
		\end{alignat}
	\end{subequations}
	as $i \to \infty$.
	In particular, we infer from \eqref{est:nabla_pot_Linfty} and $\psi_{k_i} \to \psi$ strongly in $\LpOS{2}$ that
	\begin{align}
	\label{fp:ex:i:cvg:pot_psiov_ki}
	\pot[\psi_{k_i}] \to \pot[\psi] \quad \text{ strongly in } \W^{1, \infty}(\OmSph) \text{ as } i \to \infty.
	\end{align}
	Moreover, we obtain from \eqref{fp:ex:i:a_uk} that
	\begin{align}
	\label{fp:ex:i:a_uki}
	a(\psi_{k_i})(\uflow_{k_i}, \wflow) = k(\psi_{k_i})(\wflow) \quad \forall \wflow \in \Hdiv,\ \forall i \in \N.
	\end{align}
	On noting \eqref{fp:ex:i:cvg:psi_ki_weak}, \eqref{fp:ex:i:cvg:QZL_psiov_ki}, \eqref{fp:ex:i:cvg:u_ki_strong}, \eqref{fp:ex:i:cvg:u_ki_weak}, and \eqref{fp:ex:i:cvg:pot_psiov_ki}, we pass to the limit in \eqref{fp:ex:i:a_uki} as $i \to \infty$ to deduce that
	\begin{align}
	\label{fp:ex:i:a_uhat}
	a(\psi)(\uflowhat, \wflow) = k(\psi)(\wflow) \quad \forall \wflow \in \Hdiv.
	\end{align}
	
	Furthermore, it follows from \eqref{fp:ex:i:b_psi_k} that
	\begin{align}
	\label{fp:ex:i:b_psi_ki}
	b(\uflow_{k_i})(\Xi[\psi_{k_i}], \theta) = l(\uflow_{k_i}, \psi_{k_i})(\theta) \quad \forall \theta \in \HkOS{1},\ \forall i \in \N.
	\end{align}
	On noting \eqref{fp:ex:i:cvg:psi_ki_weak}, \eqref{fp:ex:i:cvg:QZL_psiov_ki}, \eqref{fp:ex:i:cvg:u_ki_strong}, \eqref{fp:ex:i:cvg:u_ki_weak}, and \eqref{fp:ex:i:cvg:pot_psiov_ki}, we pass to the limit in \eqref{fp:ex:i:b_psi_ki} as $i \to \infty$ to deduce that
	\begin{align}
	\label{fp:ex:i:b_psihat}
	b(\uflowhat)(\psihat, \theta) = l(\uflowhat, \psi)(\theta) \quad \forall \theta \in \HkOS{1}.
	\end{align}
	Now, we infer from the uniqueness of solutions to \eqref{fp:iteration:i} and \eqref{fp:iteration:ii} and from the comparison of \eqref{fp:ex:i:a_u} and \eqref{fp:ex:i:a_uhat} and of \eqref{fp:ex:i:b_psi} and \eqref{fp:ex:i:b_psihat} that
	\begin{align}
	\label{fp:ex:i:limit_id}
	\uflow = \uflowhat \quad \text{ and } \quad \Xi[\psi] = \psihat.
	\end{align}
	Consequently, on noting \eqref{fp:ex:i:cvg:psi_ki_strong} and \eqref{fp:ex:i:limit_id}, we have that
	\begin{align}
	\label{fp:ex:i:psi_ki_limit}
	\Xi[\psi_{k_i}] \to \psihat = \Xi[\psi] \quad \text{ in } \LpOS{2} \text{ as } i \to \infty.
	\end{align}
	Therefore, we infer from the uniqueness of solutions to \eqref{fp:iteration:i} and \eqref{fp:iteration:ii} that we can find for any subsequence $\{ \Xi[\psi_{k_j}] \}_{j \in \N}$ of $\{ \Xi[\psi_k] \}_{k \in \N}$ another subsequence $\{ \Xi[\psi_{k_{j_i}}] \}_{i \in \N}$ of $\{ \Xi[\psi_{k_j]} \}_{j \in \N}$ such that
	\begin{align}
	\label{fp:ex:i:cvg:psi_kji}
	\Xi[\psi_{k_{j_i}}] \to \Xi[\psi] \quad \text{ in } \LpOS{2} \text{ as } i \to \infty.
	\end{align}
	Hence, we can return to the original series $\{ \Xi[\psi_k] \}_{k \in \N}$ in \eqref{fp:ex:i:cvg:psi_kji}, which proves \eqref{fp:ex:i:goal}.

	\textit{Proof of ii).}
	It follows directly from \eqref{fp:iteration:iii:u}, \eqref{fp:iteration:iii:psi}, and the compact embedding $\H^1(\Omega \times \Sph)$ $\hookrightarrow\hookrightarrow$ $\L^2(\Omega \times \Sph)$ that \textit{ii)} holds.

	\textit{Proof of iii).}
	Let $\lambda \in (0,1]$ and $\psi \in \L^2(\OmSph)$ such that
	\begin{align}
	\label{fp:ex:iii:def_fp}
	\psi = \lambda \Xi(\psi).
	\end{align}
	We infer from Lemma \ref{lmm:fixed_point_iter} and the definition of $\Xi$ that there exists a unique $\uflow \in \Hdiv$ such that
	\begin{subequations}
		\begin{alignat}{2}
		a(\psi)(\uflow, \wflow) &= k(\psi)(\wflow) \quad && \forall \wflow \in \Hdiv, \label{fp:ex:iii:eq:a} \\
		b(\uflow)(\psi, \theta) &= \lambda l(\uflow, \psi)(\theta) \quad && \forall \theta \in \HkOS{1}. \label{fp:ex:iii:eq:b}
		\end{alignat}
	\end{subequations}
	In particular, it holds that $\psi \in \HkOS{1}$.
	Moreover, on choosing $\theta := \negat{\psi} \in \HkOS{1}$ as a test function in \eqref{fp:ex:iii:eq:b}, we obtain from $\negat{\psiL{n-1}} = 0$, $\negat{\QZL(\psi)}$ $= 0$, $\uflow$ being divergence-free in $\Omega$ and having zero trace on $\partial \Omega$ that
	\begin{align}
    \label{fp:ex:iii:nonneg}
    \begin{split}
	&\intOmSph \negat{\psi}^2 \dH\dx + \frac{\tau}{\DE} \intOmSph \left[ \mollEps^2 \abs{\nablax \negat{\psi}}^2 + \abs{\nablag \negat{\psi}}^2 \right] \dH\dx \\
	&\quad = \lambda \intOmSph \psiL{n-1} \negat{\psi} \dH\dx
	\leq 0.
    \end{split}
	\end{align}
	Hence, we have that $\psi \geq 0$ a.e. in $\OmSph$, that is $\psi \in \Zp{2}$.
	Moreover, taking $\theta \equiv 1$ in \eqref{fp:ex:iii:eq:b} yields
	\begin{align}
	\label{fp:ex:iii:mass_consv}
		\intOmSph \psi \dH\dx = \lambda \intOmSph \psiL{n-1} \dH\dx.
	\end{align}
	Hence, we infer from \eqref{fp:ex:iii:mass_consv}, \eqref{est:nabla_pot_Linfty}, and $0 < \lambda \leq 1$ that
	\begin{align}
	\label{fp:ex:iii:est_pot_infty}
		\norm{ \pot[\psi + \psiL{n-1}] }_{\W^{1, \infty}(\OmSph)} \leq C(\CmollEps, \potStr, d) \norm{\psiL{n-1}}_{\LpOS{1}}.
	\end{align}
	
	The goal is to prove the existence of a constant $C^\star > 0$, which is independent of $\lambda$ and $\psi$, such that
	\begin{align}
	\label{fp:ex:iii:goal}
	\norm{\psi}_{\LpOS{2}} \leq C^\star.
	\end{align}
	
	One common ansatz to prove \eqref{fp:ex:iii:goal} would be taking $\wflow := \uflow \in \Hdiv$ in \eqref{fp:ex:iii:eq:a} and $\theta := \psi \in \HkOS{1}$ in \eqref{fp:ex:iii:eq:b} as test functions to derive the desired bound. Then, combining \eqref{fp:iteration:iii:u} and \eqref{fp:iteration:iii:psi} yields
	\begin{align}
	\label{fp:ex:iii:fail}
	\norm{\psi}_{\LpOS{2}} \leq C(\tau, \regL) + C(\tau, \regL) \norm{\psi}_{\LpOS{2}} + C(\tau, \regL) \norm{\psi}_{\LpOS{2}}^2.
	\end{align}
	By choosing $\tau$ and $\regL$ in a clever way, we could absorb the second term on the right-hand side of \eqref{fp:ex:iii:fail} into the left-hand side. However, we could not control or absorb the last term on the right-hand side of \eqref{fp:ex:iii:fail}.
	This is the reason why we change the ansatz to mimicking the entropy estimate in Section \ref{sec:formal_entropy_estimate}. If we assume to control the $\regL$-regularized entropy of $\psi$ analogously to the formal estimate \eqref{formal:entropy_est}, we infer from \eqref{reg:FL_geq_ss} that
	\begin{align*}
	\frac{1}{4 \regL} \intOmSph \abs{\psi}^2 \dH\dx - C(\regL) \leq \intOmSph \entropL(\psi) \dH\dx \leq C,
	\end{align*}
	which then yields the desired bound \eqref{fp:ex:iii:goal}.
	Hence, our goal is to control the $\regL$-regularized entropy of $\psi$.
	
	As $\psi$ is nonnegative and there is no reason why $\psi$ should be strictly positive, we cannot apply $\dentropL(\cdot)$ to $\psi$ to define our test function as $\mu_{\psi, \regL} = \dentropL(\psi) + \pot[\psi]$. Hence, for $0 < \regDelta \ll 1$ let us define the $\regDelta$-regularized chemical potential by
	\begin{align}
	\label{fp:ex:iii:def:muPsidLl}
	\muPsidLl := \dentropL(\psi + \regDelta) + \frac{\lambda}{2} \pot\left[ \psi + \psiL{n-1} \right] \in \HkOS{1}.
	\end{align}
	In particular, the following representation of the diffusive terms motivates the multiplication of the interaction potential in \eqref{fp:ex:iii:def:muPsidLl} by $\lambda$.
	First, we rewrite the diffusive terms in \eqref{fp:ex:iii:eq:b}. It holds that, for all $\theta \in \HkOS{1}$,
	\begin{align}
	\label{fp:ex:iii:diff_x_rewritten}
	\begin{split}
	& \intOmSph \left[ \nablax \psi \cdot \nablax \theta + \frac{\lambda}{2} \QL(\psi) \nablax \pot[\psi + \psiL{n-1}] \cdot \nablax \theta \right] \dH\dx \\
	&\quad= \intOmSph \QL(\psi + \regDelta) \nablax \muPsidLl \cdot \nablax \theta \dH\dx \\
	&\quad\quad + \intOmSph \frac{\lambda}{2} \lrcb{ \QL(\psi) - \QL(\psi + \regDelta) } \nablax \pot[\psi + \psiL{n-1}] \cdot \nablax \theta \dH\dx.
	\end{split}
	\end{align}
	Similarly, we can rewrite the analogue term with respect to the surface gradient.
	Now, let us start deriving the entropy estimate.
	First, multiplying \eqref{fp:ex:iii:eq:a} by $\lambda$, taking $\wflow := \uflow \in \Hdiv$ as a test function in \eqref{fp:ex:iii:eq:a}, and noting the formula
	\begin{align}
	\label{eq:aab}
	a(a-b) = \frac{1}{2} \lrcb{ a^2 + (a-b)^2 - b^2 } \qquad \text{ for all } a,b \in \R,
	\end{align}
	it follows that
	\begin{align}
	\label{fp:ex:iii:u_tested_u}
	\begin{split}
	&\frac{\lambda \RE \DE}{2 \tau} \intOm \left[ \abs{\uflow}^2 + \abs{\uflow - \uL{n-1}}^2 - \abs{\uL{n-1}}^2 \right] \dx + \lambda \viscFrac \DE \intOm \abs{\nablax \uflow}^2 \dx \\
	&\quad + \frac{\lambda (1 - \viscFrac) \DE}{2} \intOmSph \QL(\psi) \lrcb{\nablax\uflow : \mom}^2 \dHm\dx \\
	&= - \frac{\lambda(1-\viscFrac)}{2} \intOmSph \psi \nablax \pot[\psi + \psiL{n-1}] \cdot \uflow \dH\dx \\
	&\quad - \frac{\lambda (1- \viscFrac)}{2} \intOmSph \QL(\psi) \lrcb{ (\IdMat - \mom)\nablax\uflow\mflow } \cdot \nablag \pot[\psi + \psiL{n-1}] \dHm\dx \\
	&\quad - \lambda(1 - \viscFrac) \intOmSph \psi (d\mom - \IdMat):\nablax\uflow \dHm\dx \\
	&\quad - \lambda\activity (1 - \viscFrac) \intOmSph \QL(\psi) \mom : \nablax \uflow \dHm\dx.
	\end{split}
	\end{align}
	
	Next, noting \eqref{fp:ex:iii:diff_x_rewritten}, the convexity of $\entropL$, and taking $\theta := \muPsidLl \in \HkOS{1}$ as a test function in \eqref{fp:ex:iii:eq:b}, we have that
	\begin{align*}
	&\frac{1}{\tau} \intOmSph \left[ \entropL(\psi + \regDelta) - \entropL(\lambda \psiL{n-1} + \regDelta) \right] \dH\dx \\
	&\quad\quad + \frac{\lambda}{2 \tau} \intOmSph \lrcb{\psi - \lambda \psiL{n-1}} \lrcb{ \pot[\psi] + \pot[\psiL{n-1}] } \dH\dx \\
	&\quad\quad + \frac{\lambda \mollEps^2}{\DE} \intOmSph \QL(\psi + \regDelta) \abs{\nablax \muPsidLl}^2 \dH\dx \\
	&\quad\quad + \frac{\lambda}{\DE} \intOmSph \QL(\psi + \regDelta) \abs{\nablag \muPsidLl}^2  \dH\dx \\
	&\quad\leq \frac{\lambda}{2} \intOmSph \psi \nablax \pot[\psi + \psiL{n-1}] \cdot \uflow \dH\dx \numberthis[fp:ex:iii:psi_tested_muPsidLl] \\
	&\quad\quad\quad + \lambda \intOmSph \QL(\psi) \lrcb{(\IdMat - \mom) \nablax \uflow \mflow} \cdot \nablag \muPsidLl \dHm\dx \\
	&\quad\quad\quad - \frac{\lambda \mollEps^2}{2 \DE} \intOmSph \lrcb{ \QL(\psi) - \QL(\psi + \regDelta) } \nablax \pot[\psi + \psiL{n-1}] \cdot \nablax \muPsidLl \dH\dx \\
	&\quad\quad\quad - \frac{\lambda}{2 \DE} \intOmSph \lrcb{\QL(\psi) - \QL(\psi + \regDelta)} \nablag \pot[\psi + \psiL{n-1}] \cdot \nablag \muPsidLl \dH\dx.
	\end{align*}
	
	Before we care about the error terms in \eqref{fp:ex:iii:psi_tested_muPsidLl}, we exploit the natural structure of \eqref{fp:ex:iii:u_tested_u} and \eqref{fp:ex:iii:psi_tested_muPsidLl} as good as possible to see which terms cancel out.
	We infer from \eqref{eq:chen_stress_formula} that the second term on the right-hand side of \eqref{fp:ex:iii:psi_tested_muPsidLl} can be rewritten as
	\begin{align*}
	&\lambda \intOmSph \QL(\psi) \lrcb{ (\IdMat - \mom) \nablax \uflow \mflow } \cdot \nablag \muPsidLl \dHm\dx\\
	&\quad= \frac{\lambda^2}{2} \intOmSph \QL(\psi) \lrcb{ (\IdMat - \mom) \nablax \uflow \mflow } \cdot \nablag \pot[\psi + \psiL{n-1}] \dHm\dx \\
	&\quad\quad + \lambda \intOmSph \QL(\psi) \lrcb{ (\IdMat - \mom) \nablax \uflow \mflow } \cdot \lrcb{ \frac{1}{\QL(\psi + \regDelta)} \nablag \psi } \dHm\dx \\
	&\quad= \frac{\lambda^2}{2} \intOmSph \QL(\psi) \lrcb{ (\IdMat - \mom) \nablax \uflow \mflow } \cdot \nablag \pot[\psi + \psiL{n-1}] \dHm\dx \\
	&\quad\quad + \lambda \intOmSph \psi (d\mom - \IdMat) : \nablax \uflow \dHm\dx \numberthis[fp:ex:iii:chen_stress_rewritten] \\
	&\quad\quad + \lambda \intOmSph \frac{\QL(\psi) - \QL(\psi + \regDelta)}{\QL(\psi + \regDelta)} \lrcb{ (\IdMat - \mom) \nablax \uflow \mflow } \cdot \nablag \psi \dHm\dx.
	\end{align*}
	We note that the second term on the right-hand side of \eqref{fp:ex:iii:chen_stress_rewritten} cancels out with the third term on the right-hand side of \eqref{fp:ex:iii:u_tested_u}.
	Furthermore, we deduce from \eqref{mollifier:fg}, $0 < \lambda \leq 1$, and the nonnegativity of the Maier-Saupe interaction kernel \eqref{def:interaction_kernel} and of $\psi$ and $\psiL{n-1}$ a.e. in $\OmSph$ that
	\begin{align}
	\label{fp:ex:iii:eq:pot_energy}
	\begin{split}
	&\frac{\lambda}{2\tau} \intOmSph \lrcb{\psi - \lambda \psiL{n-1}} \lrcb{ \pot[\psi] + \pot[\psiL{n-1}] } \dH\dx \\
	&\quad= \frac{\lambda}{2\tau} \intOmSph \left[ \psi\pot[\psi] - \lambda\psiL{n-1}\pot[\psiL{n-1}] \right] \dH\dx \\
	&\quad\quad + \frac{\lambda(1-\lambda)}{2 \tau} \intOmSph \psi \pot[\psiL{n-1}] \dH\dx \\
	&\quad\geq \frac{\lambda}{2\tau} \intOmSph \left[ \psi\pot[\psi] - \lambda\psiL{n-1}\pot[\psiL{n-1}] \right] \dH\dx.
	\end{split}
	\end{align}
	
	Now, we obtain from multiplying \eqref{fp:ex:iii:psi_tested_muPsidLl} by $(1 - \viscFrac)$ and from combining \eqref{fp:ex:iii:u_tested_u} and \eqref{fp:ex:iii:psi_tested_muPsidLl} - \eqref{fp:ex:iii:eq:pot_energy} that
	\begin{align*}
	&\frac{\lambda\RE\DE}{2 \tau} \intOm \left[ \abs{\uflow}^2 + \abs{\uflow - \uL{n-1}}^2 - \abs{\uL{n-1}}^2 \right] \dx + \lambda \viscFrac \DE \intOm \abs{\nablax \uflow}^2 \dx \\
	&\quad + \frac{\lambda (1 - \viscFrac) \DE}{2} \intOmSph \QL(\psi) \lrcb{\nablax\uflow : \mom}^2 \dHm\dx \\
	&\quad + \frac{1 - \viscFrac}{\tau} \intOmSph \left[ \entropL(\psi + \regDelta) - \entropL(\lambda \psiL{n-1} + \regDelta) \right] \dH\dx \\
	&\quad + \frac{\lambda (1 - \viscFrac)}{2 \tau} \intOmSph \left[ \psi \pot[\psi] - \lambda \psiL{n-1} \pot[\psiL{n-1}] \right] \dH\dx \\
	&\quad + \frac{\lambda (1 - \viscFrac) \mollEps^2}{\DE} \intOmSph \QL(\psi + \regDelta) \abs{\nablax \muPsidLl}^2 \dH\dx \\
	&\quad + \frac{\lambda (1 - \viscFrac)}{\DE} \intOmSph \QL(\psi + \regDelta) \abs{\nablag \muPsidLl}^2 \dH\dx \\	
	&\leq - \lambda \alpha (1 - \viscFrac) \intOmSph \QL(\psi) \mom : \nablax \uflow \dHm\dx \numberthis[fp:ex:iii:u_psi_combined] \\
	&\quad + \frac{\lambda(\lambda-1)(1-\viscFrac)}{2} \intOmSph \QL(\psi) \lrcb{ (\IdMat - \mom) \nablax \uflow \mflow } \cdot  \\
	&\qquad\qquad\qquad\qquad\qquad\qquad\qquad\qquad\qquad \cdot \nablag \pot[\psi + \psiL{n-1}] \dHm\dx \\
	&\quad + \lambda(1 - \viscFrac) \intOmSph \frac{\QL(\psi) - \QL(\psi + \regDelta)}{\QL(\psi + \regDelta)} \lrcb{ (\IdMat - \mom) \nablax \uflow \mflow } \cdot \\
	&\qquad\qquad\qquad\qquad\qquad\qquad\qquad\qquad\qquad \cdot \nablag \psi \dHm\dx \\
	&\quad - \frac{\lambda \mollEps^2 (1 - \viscFrac)}{2 \DE} \intOmSph \lrcb{ \QL(\psi) - \QL(\psi + \regDelta) } \nablax \pot[\psi + \psiL{n-1}] \cdot  \\
	&\qquad\qquad\qquad\qquad\qquad\qquad\qquad\qquad\qquad \cdot \nablax \muPsidLl \dH\dx \\
	&\quad - \frac{\lambda (1 - \viscFrac)}{2 \DE} \intOmSph \lrcb{\QL(\psi) - \QL(\psi + \regDelta)} \nablag \pot[\psi + \psiL{n-1}] \cdot  \\
	&\qquad\qquad\qquad\qquad\qquad\qquad\qquad\qquad\qquad \cdot \nablag \muPsidLl \dH\dx \\
	&=: I + II + III + IV + V.
	\end{align*}
	We note that the active term $I$ appears in the same form as in the formal estimate \eqref{formal:summarize_1}. Moreover, it is worth mentioning that in the case of the usual entropy estimate, that is $\lambda = 1$, we would have that $II = 0$.
	The error terms $III-V$ are caused by the approximate scheme and the $\regDelta$-regularized test function $\muPsidLl$.\\
	Let us now continue estimating the terms $I - V$. We infer from $0 < \lambda \leq 1$, \eqref{reg:QL_leq}, \eqref{fp:ex:iii:mass_consv}, and Young's inequality that
	\begin{align*}
	\abs{I}
	&\leq \frac{\lambda (1 - \viscFrac) \DE}{4} \intOmSph \QL(\psi) \lrcb{\nablax\uflow : \mom}^2 \dHm\dx \\
	&\quad + \frac{(1-\viscFrac) \activity^2}{\DE} \norm{\psiL{n-1}}_{\LpOS{1}}. \numberthis[fp:ex:iii:I]
	\end{align*}
	The first term on the right-hand side of \eqref{fp:ex:iii:I} can be absorbed into the left-hand side of \eqref{fp:ex:iii:u_psi_combined}.
	Furthermore, we deduce from $\abs{\IdMat - \mom} \leq 2d$, $\abs{\lambda(\lambda-1)} \leq \lambda$, \eqref{reg:QL_leq}, \eqref{fp:ex:iii:mass_consv}, \eqref{fp:ex:iii:est_pot_infty}, Jensen's inequality, and Young's inequality that
	\begin{align}
	\label{fp:ex:iii:II}
	\begin{split}
	\abs{II}
	&\leq \frac{\lambda (1-\viscFrac) 2d }{2} \intOmSph \abs{\QL(\psi)} \abs{\nablax \uflow} \abs{\nablag \pot[\psi + \psiL{n-1}]} \dH\dx \\
	&\leq \frac{\lambda \gamma \DE}{2} \intOm \abs{\nablax \uflow}^2 \dx  + C_1 \regL \norm{\psiL{n-1}}_{\LpOS{1}}^3,
	\end{split}
	\end{align}
	where $C_1 = C_1(\viscFrac, \viscFrac^{-1}, \DE^{-1}, d, \OmSph, \CmollEps, \potStr) > 0$ is independent of $\regDelta, \tau$, $\lambda$, $\regL$, and $\psi$.
	Again, we can absorb the first term on the right-hand side of \eqref{fp:ex:iii:II} into the left-hand side of \eqref{fp:ex:iii:u_psi_combined}.
	Moreover, on noting the Lipschitz continuity of $\QL$, the nonnegativity of $\psi$, \eqref{reg:QL_leq}, \eqref{fp:ex:iii:mass_consv}, \eqref{fp:ex:iii:est_pot_infty}, $0 < \lambda \leq 1$, and on adapting a computation of Chen and Liu (cf. \cite[(3.32)]{chen_liu_2013_entropy_solution}), we infer from Jensen's inequality and Young's inequality that
	\begin{align*}
	\abs{III}
	&\leq \lambda (1 - \viscFrac) 2d \sqrt{\regDelta} \intOmSph \abs{\nablax\uflow} \abs{\nablag \psi} \frac{1}{\sqrt{\QL(\psi + \regDelta)}} \dH\dx \\
	&\leq \frac{\lambda (1 - \viscFrac)}{8 \DE} \intOmSph \QL(\psi + \regDelta) \abs{\nablag \dentropL(\psi + \regDelta)}^2 \dH\dx \\
	&\quad + C_2(\DE, \viscFrac, d) \regDelta \intOm \abs{\nablax \uflow}^2 \dx \\
	&\leq \frac{\lambda (1 - \viscFrac)}{4 \DE} \intOmSph \QL(\psi + \regDelta) \abs{\nablag \muPsidLl}^2 \dH\dx \numberthis[fp:ex:iii:III] \\
	&\quad + C_3\lrcb{\DE^{-1}, \viscFrac, \CmollEps, \potStr, d, \Omega} \norm{\psiL{n-1}}_{\LpOS{1}}^2 \lrcb{\norm{\psiL{n-1}}_{\LpOS{1}} + 1} \\
	&\quad + C_2(\DE, \viscFrac, d) \regDelta \intOm \abs{\nablax \uflow}^2 \dx,
	\end{align*}
	where $C_2 = C_2(\DE, \viscFrac, d) > 0$ and $C_3 =C_3\lrcb{\DE^{-1}, \viscFrac, \CmollEps, \potStr, d, \Omega} > 0$ are independent of $\regDelta, \tau$, $\lambda$, $\regL$, and $\psi$.
	We note that the first term on the right-hand side of \eqref{fp:ex:iii:III} can be absorbed into the left-hand side of \eqref{fp:ex:iii:u_psi_combined}. Moreover, the last term on the right-hand side of \eqref{fp:ex:iii:III} can be absorbed as well if we choose $\regDelta > 0$ sufficiently small. The exact range of $\regDelta$ will be specified later.
	Next, we infer from $\QL(s) \leq \QL(s + \regDelta)$ for all $s \geq 0$, the Lipschitz continuity of $\QL$, \eqref{reg:QL_leq}, \eqref{fp:ex:iii:mass_consv}, \eqref{fp:ex:iii:est_pot_infty}, $0 < \lambda \leq 1$, Jensen's inequality, and Young's inequality that
	\begin{align*}
	\abs{IV}
	&\leq \frac{\lambda \mollEps^2 (1 - \viscFrac)}{2 \DE} \intOmSph \QL(\psi + \regDelta) \abs{\nablax \muPsidLl}^2 \dH\dx \numberthis[fp:ex:iii:IV] \\
	&\quad + C_4 \lrcb{\mollEps, \DE^{-1}, \viscFrac, \Omega, \CmollEps, \potStr, d} \regDelta \norm{\psiL{n-1}}_{\LpOS{1}}^2
	\end{align*}
	and
	\begin{align*}
	\abs{V}
	&\leq \frac{\lambda (1 - \viscFrac)}{4 \DE} \intOmSph \QL(\psi + \regDelta) \abs{\nablag \muPsidLl}^2 \dH\dx \numberthis[fp:ex:iii:V] \\
	&\quad + C_5\lrcb{\DE^{-1}, \viscFrac, \Omega, \CmollEps, \potStr, d} \regDelta \norm{\psiL{n-1}}_{\LpOS{1}}^2,
	\end{align*}
	where $C_4 = C_4 \lrcb{\mollEps, \DE^{-1}, \viscFrac, \Omega, \CmollEps, \potStr, d} > 0$ and $C_5 = C_5\lrcb{\DE^{-1}, \viscFrac, \Omega, \CmollEps, \potStr, d} > 0$ are independent of $\regDelta, \tau$, $\lambda$, $\regL$, and $\psi$.
	Now, we obtain from combining \eqref{fp:ex:iii:u_psi_combined} - \eqref{fp:ex:iii:V} that
	\begin{align*}
	&\frac{\lambda \RE \DE}{2 \tau} \intOm \left[ \abs{\uflow}^2 + \abs{\uflow - \uL{n-1}}^2 - \abs{\uL{n-1}}^2 \right] \dx + \lrcb{\frac{\lambda\viscFrac\DE}{2} - C_2 \regDelta} \intOm \abs{\nablax \uflow}^2 \dx \\
	&\quad\quad + \frac{\lambda (1 - \viscFrac) \DE}{4} \intOmSph \QL(\psi) \lrcb{\nablax\uflow : \mom}^2 \dHm\dx \\
	&\quad\quad + \frac{1 - \viscFrac}{\tau} \intOmSph \left[ \entropL(\psi + \regDelta) - \entropL(\lambda\psiL{n-1} + \regDelta) \right] \dH\dx \\
	&\quad\quad + \frac{\lambda (1 - \viscFrac)}{2 \tau} \intOmSph \left[ \psi\pot[\psi] - \lambda\psiL{n-1}\pot[\psiL{n-1}] \right] \dH\dx \\
	&\quad\quad + \frac{\lambda (1 - \viscFrac) \mollEps^2}{2 \DE} \intOmSph \QL(\psi + \regDelta) \abs{\nablax \muPsidLl}^2 \dH\dx \\
	&\quad\quad + \frac{\lambda (1 - \viscFrac)}{2 \DE} \intOmSph \QL(\psi + \regDelta) \abs{\nablag \muPsidLl}^2 \dH\dx \\
	&\quad\leq \frac{(1-\viscFrac) \activity^2}{\DE} \norm{\psiL{n-1}}_{\LpOS{1}} +  C_1 \regL \norm{\psiL{n-1}}_{\LpOS{1}}^3 \numberthis[fp:ex:iii:u_psi_I_to_V] \\
	&\quad\quad + C_3 \lrcb{\norm{\psiL{n-1}}_{\LpOS{1}} + 1}\norm{\psiL{n-1}}_{\LpOS{1}}^2 \\
	&\quad\quad + \lrcb{C_4 + C_5 }\regDelta \norm{\psiL{n-1}}_{\LpOS{1}}^2.
	\end{align*}

    Now, we deduce from \eqref{fp:ex:iii:u_psi_I_to_V}, $0 < \lambda \leq 1$, \eqref{reg:FL_geq_ss}, the nonnegativity of $\psi$, omitting some nonnegative terms, and the Dominated Convergence Theorem that
	\begin{align}
	\label{fp:ex:iii:bound_psi_psi_1}
	\begin{split}
	\frac{1}{4 \regL} \intOmSph \abs{\psi}^2 \dH\dx 
	 \leq C,
	\end{split}
	\end{align}
	where $C = C\lrcb{C_1, C_3, \potStr, d, \Omega,  \RE, \DE, \DE^{-1}, \activity, \viscFrac, \tau, \regL, \uL{n-1}, \psiL{n-1}} > 0$ is independent of $\lambda$ and $\psi$.
	Hence, it finally follows that \eqref{fp:ex:iii:goal} holds. The proof of Lemma \ref{lmm:existence_fp} is thereby complete.
\end{proof}

\begin{crllr}
	\label{crllr:nonnegativity}
	Suppose that the assumptions \eqref{assumptions:domain_init_data} hold.
	Then, there exist solutions $\{ ( \uL{k}, \psiL{k} ) \}_{k=1}^N$ $\subset$ $\Hdiv \times \HkOS{1}$ to \eqref{def:PLtau}. In particular, it holds that $\psiL{k} \in \Zp{2}$ for $k \in \{1, \dots, N\}$. Hence, $\{ ( \uL{k}, \psiL{k} ) \}_{k=1}^N$ $\subset$ $\Hdiv \times (\HkOS{1} \cap \Zp{2})$ is a solution to $\PLtau$.
\end{crllr}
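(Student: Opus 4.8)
The plan is to construct the discrete-in-time solutions iteratively in $n \in \{1,\dots,N\}$. At each step I would invoke Lemma \ref{lmm:existence_fp} to obtain a solution $(\uL{n},\psiL{n})$ of the variational system \eqref{def:PLtau}, and then upgrade the regularity statement $\psiL{n} \in \HkOS{1}$ to $\psiL{n} \in \Zp{2}$; this nonnegativity is simultaneously the ``in particular'' assertion of the corollary and the hypothesis $\psiL{n} \in \Zp{2}$ needed in order to run the $(n+1)$-th step, so the two claims have to be established together by induction on $n$.

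For the base case $n = 0$ the data $\uL{0} \in \Hdiv$ and $\psiL{0} \in \Zp{2}$ are fixed by \eqref{init:def:u_L_0} and \eqref{init:def:psi_L_0}. For the inductive step, given $(\uL{n-1},\psiL{n-1}) \in \Hdiv \times \Zp{2}$, Lemma \ref{lmm:existence_fp} furnishes a fixed point $\psi \in \LpOS{2}$ of the operator $\Xi$ built from this data; since $\Xi$ maps $\LpOS{2}$ into $\HkOS{1}$, the fixed point satisfies $\psi = \Xi[\psi] \in \HkOS{1}$. Putting $\psiL{n} := \psi$ and letting $\uL{n} := \uflow \in \Hdiv$ be the unique solution of \eqref{fp:iteration:i} with $\psiov = \psiL{n}$, the remark following Definition \ref{def:fixed_point_iteration} yields that $(\uL{n},\psiL{n})$ solves \eqref{def:PLtau}.

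It then remains to show $\psiL{n} \geq 0$ a.e.\ in $\OmSph$, for which I would reuse the negative-part test underlying \eqref{fp:ex:iii:nonneg}, now with $\lambda = 1$: choose $\theta := \negat{\psiL{n}} \in \HkOS{1}$ in \eqref{eq:PLtau:psi}. The relevant points are that $\nablax \negat{\psiL{n}}$ and $\nablag \negat{\psiL{n}}$ are supported on $\{\psiL{n} < 0\}$, where $\QZL(\psiL{n}) = 0$ and $\psiL{n} = \negat{\psiL{n}}$; hence every term containing $\QZL(\psiL{n})$ drops out, $\psiL{n}\nablax\negat{\psiL{n}} = \frac{1}{2}\nablax\big(\negat{\psiL{n}}^2\big)$, and combining the last identity with $\Divx \uL{n} = 0$ in $\Omega$ and $\uL{n} = 0$ on $\partial\Omega$ makes the transport term vanish as well. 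Using $\psiL{n}\negat{\psiL{n}} = \negat{\psiL{n}}^2$ in the time-difference term together with $\psiL{n-1} \geq 0$ and $\negat{\psiL{n}} \leq 0$ a.e., one arrives at
\begin{align*}
\intOmSph \negat{\psiL{n}}^2 \dH\dx + \frac{\tau}{\DE}\intOmSph \left[ \mollEps^2 \abs{\nablax \negat{\psiL{n}}}^2 + \abs{\nablag \negat{\psiL{n}}}^2 \right] \dH\dx = \intOmSph \psiL{n-1}\,\negat{\psiL{n}} \dH\dx \leq 0,
\end{align*}
whence $\negat{\psiL{n}} = 0$ a.e., i.e.\ $\psiL{n} \in \Zp{2}$. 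This closes the induction and shows that $\{(\uL{k},\psiL{k})\}_{k=1}^N \subset \Hdiv \times (\HkOS{1}\cap\Zp{2})$ solves $\PLtau$.

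The statement is essentially bookkeeping on top of Lemma \ref{lmm:existence_fp}: the only step with genuine content is the nonnegativity argument, and even there the one delicate point is correctly identifying the supports of $\nablax\negat{\psiL{n}}$ and $\nablag\negat{\psiL{n}}$ so that all $\QZL(\psiL{n})$-terms and the convection term cancel. Everything else follows directly from the definition of the truncation $\QZL$, the solenoidality and homogeneous boundary condition of $\uL{n}$, and the inductively available sign of $\psiL{n-1}$, so I do not anticipate any real obstacle.
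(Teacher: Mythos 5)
Your proposal is correct and follows essentially the same route as the paper: iterate Lemma \ref{lmm:existence_fp} (whose fixed points of $\Xi$ are exactly solutions of \eqref{def:PLtau}) over $n$, with the nonnegativity of $\psiL{n}$ obtained by the negative-part test of \eqref{fp:ex:iii:nonneg} applied with $\lambda = 1$, using $\psiL{n-1} \in \Zp{2}$ inductively. Your write-up merely spells out the support argument for $\nablax\negat{\psiL{n}}$, $\nablag\negat{\psiL{n}}$ and the vanishing of the convection term, which the paper leaves implicit by citing \eqref{fp:ex:iii:nonneg}.
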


\begin{proof}
	As a fixed-point of $\Xi$ is equivalent to a solution to \eqref{def:PLtau}, applying Lemma \ref{lmm:existence_fp} iteratively yields the existence of solutions $\{ ( \uL{k}, \psiL{k} ) \}_{k=1}^N$ $\subset$ $\Hdiv \times \H^1(\Omega \times \Sph)$ to \eqref{def:PLtau}.
	It therefore remains to show that $\{\psiL{k}\}_{k=1}^N \subset \Zp{2}$. 
	As $\psiL{0} \in \Zp{2}$ (see \eqref{init:def:psi_L_0}), let $k \in \{1, \dots, N\}$ and $\psiL{k-1} \in \Zp{2}$.
    Applying the same arguments as in \eqref{fp:ex:iii:nonneg} yields $\psiL{k} \in \Zp{2}$.
\end{proof}

\subsection{A uniform entropy estimate}
\label{sec:uniform_entropy_estimate}
This section is devoted to the derivation of a uniform entropy estimate with respect to $N$ and $\regL$, which mimics the formal entropy estimate \eqref{formal:entropy_est}.
First, we infer from taking $\psi \equiv 1$ as a test function in \eqref{eq:PLtau:psi} that the mass of $\psiL{n}$ is conserved, that is
\begin{align}
\label{eq:mass_consv_psiLn}
\intOmSph \psiL{n} \dH\dx = \intOmSph \psiL{n-1} \dH\dx = \intOmSph \psiL{0} \dH\dx
\end{align}
for all $n \in \{1, \dots, N\}$.
Moreover, for future references, we deduce from \eqref{est:nabla_pot_Linfty}, \eqref{eq:mass_consv_psiLn}, and \eqref{init:regularity_psi_L_0} that, for all $k \in \{1, \dots, N\}$,
\begin{align}
\label{eq:nabla_potLk_bound}
\begin{split}
\norm{\potL{k}}_{\W^{1, \infty}(\OmSph)}
\leq C(\CmollEps, \potStr, d) \norm{\psi_0}_{\LpOS{1}}
< \infty,
\end{split}
\end{align}
where $C = C(\CmollEps, \potStr, d) > 0$ is independent of $\regDelta, \tau$, and $\regL$. 
The following lemma holds:

\begin{lmm}[Discrete-in-time entropy estimate]
	\label{lmm:time_discrete_entropy}
	Suppose that the assumptions \eqref{assumptions:domain_init_data} hold.
	Then, solutions $\{ ( \uL{k}, \psiL{k} ) \}_{k=1}^N$ $\subset$ $\Hdiv \times (\HkOS{1} \cap \Zp{2})$ to \eqref{def:PLtau} satisfy
	\begin{align*}
	& \frac{\RE \DE}{2} \intOm \abs{ \uL{n} }^2 \dx + \frac{\RE\DE}{2} \sum_{k=1}^{n} \intOm \abs{\uL{k} - \uL{k-1}}^2 \dx + \frac{\viscFrac \DE}{2} \sum_{k=1}^{n} \tau \intOm \abs{\nablax \uL{k}}^2 \dx \\
	& \quad + \frac{(1 - \viscFrac) \DE}{4} \sum_{k=1}^{n} \tau \intOmSph \QL(\psiL{k}) \lrcb{ \nablax \uL{k} : \mom }^2 \dHm \dx \\
	& \quad + (1 - \viscFrac) \intOmSph \entrop(\psiL{n}) \dH\dx + \frac{1 - \viscFrac}{2} \intOmSph \psiL{n} \pot[\psiL{n}] \dH \dx \\
	& \quad + \frac{1 - \viscFrac}{2 \regL} \sum_{k=1}^n \intOmSph \abs{ \psiL{k} - \psiL{k-1} }^2 \dH\dx \\
	& \quad + \frac{1 - \viscFrac}{\DE} \sum_{k=1}^{n} \tau \intOmSph \Big[ \mollEps^2 | \nablax \sqrt{\psiL{k}} |^2 + | \nablag \sqrt{\psiL{k}} |^2 \Big] \dH\dx \\
	&\leq \frac{\RE \DE}{2} \intOm \abs{\uflow_0}^2 \dx + (1 - \viscFrac) \intOmSph \entrop(\psi_0) \dH\dx \numberthis[est:discrete_entropy] \\
	&\quad + \frac{1 - \viscFrac}{2} \intOmSph \psi_0 \pot[\psi_0] \dH\dx + \frac{\activity^2 (1 - \viscFrac) T}{\DE} \norm{\psi_0}_{\LpOS{1}} + C_{*} \\
	&\leq C\lrcb{ \RE, \DE, \DE^{-1}, \viscFrac, \activity, T, \CmollEps, \potStr, d, \Omega, \mollEps, \uflow_0, \psi_0 }
	< \infty \qquad \text{ for all } n \in \{ 1, \dots, N \},
	\end{align*}
	where $C_{*} > 0$ depends solely on $T, \psi_0, \DE^{-1}, \viscFrac, \CmollEps, \potStr, d, \Omega$, and $\mollEps$. 
\end{lmm}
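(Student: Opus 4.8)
The plan is to mimic, at the discrete level and after summation over the time steps, the formal computation of Section~\ref{sec:formal_entropy_estimate}; most of the algebra is in fact already contained in the proof of Lemma~\ref{lmm:existence_fp}~iii), specialised here to $\lambda = 1$. Fix $n \in \{1,\dots,N\}$ and, for a small parameter $\regDelta \in (0,\regDelta_0)$ whose admissible range $\regDelta_0 > 0$ will be chosen depending only on $\DE$, $\viscFrac$, $d$ and the mollifier (in particular not on $N$ or $\regL$), set $\muPsidL{n} := \dentropL(\psiL{n} + \regDelta) + \potL{n} \in \HkOS{1}$; this is an admissible test function because $\psiL{n} \geq 0$ by Corollary~\ref{crllr:nonnegativity} (so $\psiL{n}+\regDelta \geq \regDelta > 0$ and $\dentropL(\psiL{n}+\regDelta) \in \HkOS{1}$) and $\potL{n} \in \W^{1,\infty}(\OmSph)$ by \eqref{eq:nabla_potLk_bound}. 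I would take $\wflow := \uL{n}$ in \eqref{eq:PLtau:u} and multiply by $\tau$; take $\theta := \muPsidL{n}$ in \eqref{eq:PLtau:psi} and multiply by $\tau(1-\viscFrac)$; and add the two identities. On the discrete time derivatives I use $a(a-b) = \tfrac12(a^2 + (a-b)^2 - b^2)$ for the velocity and the \emph{strong} convexity $\ddentropL \geq \regL^{-1}$ (see \eqref{reg:ddFL_geq}) for the entropy; the latter is exactly what yields the term $\tfrac{1-\viscFrac}{2\regL}\sum_{k=1}^{n}\intOmSph \abs{\psiL{k}-\psiL{k-1}}^2\dH\dx$ that is not present in the weaker bound \eqref{fp:iteration:iii:psi}.

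The heart of the argument is the set of exact cancellations already exploited in the proof of Lemma~\ref{lmm:existence_fp}: writing $\psiL{n}\nablax\muPsidL{n} = \nablax G(\psiL{n}) + \psiL{n}\nablax\potL{n}$ with $G'(s) = s/\QL(s+\regDelta)$, the advection term splits into a piece that vanishes (as $\uL{n}$ is divergence-free with zero trace) plus a piece cancelling the body force of the Navier--Stokes equation; one has $\intOm((\uL{n-1}\cdot\nablax)\uL{n})\cdot\uL{n}\dx = 0$; the potential-energy term telescopes by the symmetry \eqref{mollifier:fg} of the mollifier and of the Maier--Saupe kernel, giving at $\lambda = 1$ the clean identity $\intOmSph(\psiL{n}-\psiL{n-1})(\pot[\psiL{n}] + \pot[\psiL{n-1}])\dH\dx = \intOmSph(\psiL{n}\pot[\psiL{n}] - \psiL{n-1}\pot[\psiL{n-1}])\dH\dx$; the diffusive terms, rewritten via $\nablax\psiL{n} = \QL(\psiL{n}+\regDelta)(\nablax\muPsidL{n}-\nablax\potL{n})$ as in \eqref{fp:ex:iii:diff_x_rewritten} and its surface analogue, combine into $\tfrac{1-\viscFrac}{\DE}\tau\intOmSph \QL(\psiL{n}+\regDelta)(\mollEps^2\abs{\nablax\muPsidL{n}}^2 + \abs{\nablag\muPsidL{n}}^2)\dH\dx$ up to errors of type $IV$, $V$; and, rewriting the elastic-stress term with \eqref{eq:chen_stress_formula} as in \eqref{fp:ex:iii:chen_stress_rewritten}, its $(d\mom-\IdMat):\nablax\uL{n}$ part cancels the corresponding term of the Navier--Stokes equation while its $\nablag\pot$ part carries the prefactor $\lambda(\lambda-1) = 0$. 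This last observation is crucial: the only error term in the proof of Lemma~\ref{lmm:existence_fp} that involved $\regL$ (the term $II$) disappears at $\lambda = 1$, which is precisely why the present bound is uniform in $\regL$.

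After these cancellations the right-hand side reduces to the active term and the regularisation errors $III$, $IV$, $V$ of \eqref{fp:ex:iii:u_psi_combined} (now each with an overall factor $\tau(1-\viscFrac)$). I would estimate them exactly as in \eqref{fp:ex:iii:I} and \eqref{fp:ex:iii:III}--\eqref{fp:ex:iii:V}: Young's inequality bounds the active term by $\tfrac{(1-\viscFrac)\DE}{4}\tau\intOmSph \QL(\psiL{n})(\nablax\uL{n}:\mom)^2\dHm\dx$ (absorbed on the left) plus $\tfrac{(1-\viscFrac)\activity^2}{\DE}\tau\intOmSph \QL(\psiL{n})\dH\dx$, and by mass conservation \eqref{eq:mass_consv_psiLn} together with \eqref{init:regularity_psi_L_0} one has $\intOmSph \QL(\psiL{n})\dH\dx \leq \norm{\psiL{0}}_{\LpOS{1}} \leq \norm{\psi_0}_{\LpOS{1}}$ for every $n$, so the sum over $n$ contributes $\tfrac{(1-\viscFrac)\activity^2 T}{\DE}\norm{\psi_0}_{\LpOS{1}}$; the terms $III$--$V$ are handled by the same Jensen/Young manipulations (adapting \cite[(3.32)]{chen_liu_2013_entropy_solution}), their ``good'' parts being absorbed into $\tfrac{1-\viscFrac}{\DE}\tau\intOmSph \QL(\psiL{n}+\regDelta)(\mollEps^2\abs{\nablax\muPsidL{n}}^2 + \abs{\nablag\muPsidL{n}}^2)\dH\dx$ and into $\viscFrac\DE\tau\intOm\abs{\nablax\uL{n}}^2\dx$ on the left (possible once $\regDelta < \regDelta_0$), and their remainders being bounded, via \eqref{eq:nabla_potLk_bound} and mass conservation, by $\tau$ times an $\regL$-independent constant depending only on $T$, $\psi_0$, $\DE^{-1}$, $\viscFrac$, $\CmollEps$, $\potStr$, $d$, $\Omega$, $\mollEps$ (plus a part proportional to $\regDelta$); the sum of these remainders over $n$ is the constant $C_{*}$.

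Finally, I would sum the combined inequality over $n = 1,\dots,k$, telescoping the kinetic, entropy and potential-energy contributions --- using $\intOm\abs{\uL{0}}^2\dx \leq \intOm\abs{\uflow_0}^2\dx$ from \eqref{init:regularity_u_L_0}, $\psiL{0} = \QL(\psi_0)$, and the monotonicity of $\pot[\cdot]$ to get $\intOmSph \psiL{0}\pot[\psiL{0}]\dH\dx \leq \intOmSph \psi_0\pot[\psi_0]\dH\dx$ --- and then let $\regDelta \searrow 0$. On the left this uses Fatou's lemma (since $\entropL \geq \entrop \geq 0$ and $\entropL(\psiL{k}+\regDelta) \to \entropL(\psiL{k}) \geq \entrop(\psiL{k})$ pointwise by \eqref{reg:FL_geq_F}) together with monotone convergence, after first bounding $\QL(\psiL{n}+\regDelta)\abs{\nabla\muPsidL{n}}^2$ from below by $\tfrac12\abs{\nabla\psiL{n}}^2/(\psiL{n}+\regDelta)$ minus a remainder controlled via \eqref{eq:nabla_potLk_bound} (for $\nabla \in \{\nablax,\nablag\}$) and using $\abs{\nabla\psiL{n}}^2/(\psiL{n}+\regDelta) \nearrow 4\abs{\nabla\sqrt{\psiL{n}}}^2$; on the right it uses \eqref{reg:FLQL_leq_FL} to get $\limsup_{\regDelta\to0}\intOmSph \entropL(\psiL{0}+\regDelta)\dH\dx \leq \intOmSph \entrop(\psi_0)\dH\dx$ and dominated convergence for the $\regDelta$-proportional pieces. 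The outcome is \eqref{est:discrete_entropy}. I expect the main difficulty to be the bookkeeping: tracking the absorptions so that exactly the coefficients $\tfrac{\viscFrac\DE}{2}$, $\tfrac{(1-\viscFrac)\DE}{4}$ and $\tfrac{1-\viscFrac}{\DE}$ survive on the left, and verifying that no factor of $\regL$ re-enters through $III$--$V$, so that the bound is genuinely uniform in both $N$ and $\regL$ before the limit $\regDelta \to 0$ is taken.
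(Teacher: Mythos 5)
Your proposal is correct and follows essentially the same route as the paper: test with $\uL{n}$ and the $\regDelta$-regularized chemical potential $\muPsidL{n}=\dentropL(\psiL{n}+\regDelta)+\potL{n}$, reuse the combined identity \eqref{fp:ex:iii:u_psi_combined} at $\lambda=1$ (where the only $\regL$-dependent error term drops out), absorb the active and regularisation errors, sum in time, and send $\regDelta\searrow 0$ via Fatou and \eqref{reg:FLQL_leq_FL}. Your observation that the strong convexity $\ddentropL\geq\regL^{-1}$ produces the $\tfrac{1-\viscFrac}{2\regL}\sum\intOmSph\abs{\psiL{k}-\psiL{k-1}}^2$ term is exactly the content of the Barrett--S\"uli Taylor expansion the paper invokes.
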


\begin{proof}[Proof of Lemma \ref{lmm:time_discrete_entropy}]
	Let a regularization parameter $0 < \regDelta \ll 1$ be given. Then, we rewrite the diffusive terms in \eqref{eq:PLtau:psi} for all $\theta \in \HkOS{1}$ analogously to \eqref{fp:ex:iii:diff_x_rewritten} as follows:
	\begin{align}
    \label{discr_entr:diff_rewritten}
    \begin{split}
	& \frac{\mollEps^2}{\DE} \intOmSph \lrcb{\nablax \psiL{n} + \QL(\psiL{n}) \nablax \potL{n}} \cdot \nablax \theta \dH\dx \\
	&\quad = \frac{\mollEps^2}{\DE} \intOmSph \QL(\psiL{n} + \regDelta) \nablax \muPsidL{n} \cdot \nablax \theta \dH\dx \\
	&\quad \quad + \frac{\mollEps^2}{\DE} \intOmSph \lrcb{\QL(\psiL{n}) - \QL(\psiL{n} + \regDelta)} \nablax \potL{n} \cdot \nablax \theta \dH\dx,
    \end{split}
	\end{align}
	where we introduced the $\regDelta$-regularized chemical potential
	\begin{align*}
	\muPsidL{n} := \dentropL(\psiL{n} + \regDelta) + \potL{n} \quad \in \HkOS{1}.
	\end{align*}
    
    Moreover, we recall the following estimate, which has been derived by Barrett and S\"uli by Taylor's series expansion (cf. \cite[p. 1238]{barrett_sueli_2011_polymers_I}):
	\begin{align}
    \label{discr_entr:taylor_est}
    \begin{split}
	&\intOmSph \frac{\psiL{n} - \psiL{n-1}}{\tau} \dentropL(\psiL{n} + \regDelta) \dH\dx \\
	&\quad\geq \frac{1}{\tau} \intOmSph \left[ \entropL(\psiL{n} + \regDelta) - \entropL(\psiL{n-1} + \regDelta) \right] \dH\dx \\
	&\quad\quad + \frac{1}{2 \tau \regL}\intOmSph \abs{ \psiL{n} - \psiL{n-1} } \dH\dx.
	\end{split}
    \end{align}

    Now, on noting \eqref{discr_entr:diff_rewritten} and \eqref{discr_entr:taylor_est}, we infer from taking $\wflow := \uL{n} \in \Hdiv$ and $\theta := \muPsidL{n} \in \H^1(\Omega \times \Sph)$ as test functions in \eqref{eq:PLtau:u} and \eqref{eq:PLtau:psi}, respectively, and from the analogue estimate \eqref{fp:ex:iii:u_psi_combined} with $\lambda = 1$ that the following estimate holds:
	\begin{align*}
	&\frac{\RE\DE}{2 \tau} \intOm \left[ \abs{\uL{n}}^2 + \abs{\uL{n} - \uL{n-1}}^2 - \abs{\uL{n-1}}^2 \right] \dx + \viscFrac\DE \intOm \abs{\nablax \uL{n}}^2 \dx \\
	&\qquad + \frac{(1-\viscFrac) \DE}{2} \intOmSph \QL(\psiL{n}) \lrcb{\nablax \uL{n} : \mom}^2 \dHm\dx \\
	&\qquad + \frac{1-\viscFrac}{\tau} \intOmSph \left[ \entropL(\psiL{n} + \regDelta) - \entropL(\psiL{n-1} + \regDelta) \right] \dH\dx \\
	&\qquad + \frac{1-\viscFrac}{2 \tau} \intOmSph \left[ \psiL{n} \pot[\psiL{n}] - \psiL{n-1} \pot[\psiL{n-1}] \right]\dH\dx \\
	&\qquad + \frac{1-\viscFrac}{2 \tau \regL} \intOmSph \abs{ \psiL{n} - \psiL{n-1} }^2 \dH\dx \\
	&\qquad + \frac{(1-\viscFrac) \mollEps^2}{\DE} \intOmSph \QL(\psiL{n} + \regDelta) \abs{\nablax \muPsidL{n}}^2 \dH\dx \\
	&\qquad + \frac{1-\viscFrac}{\DE} \intOmSph \QL(\psiL{n} + \regDelta) \abs{\nablag \muPsidL{n}}^2 \dH\dx \\
	&\quad\leq - \activity (1 - \viscFrac) \intOmSph \QL(\psiL{n}) \mom : \nablax \uL{n} \dHm\dx \numberthis[discr_entr:u_psi_combined] \\
	&\quad\quad + (1-\viscFrac) \intOmSph \frac{\QL(\psiL{n}) - \QL(\psiL{n} + \regDelta)}{\QL(\psiL{n} + \regDelta)} \lrcb{ (\IdMat - \mom) \nablax \uL{n} \mflow } \cdot  \\
	&\qquad\qquad\qquad\qquad\qquad\qquad\qquad\qquad\qquad\qquad\qquad \cdot \nablag \psiL{n} \dHm\dx \\
	&\quad\quad - \frac{(1-\viscFrac)\mollEps^2}{\DE} \intOmSph \lrcb{ \QL(\psiL{n}) - \QL(\psiL{n} + \regDelta) } \nablax \potL{n} \cdot \nablax \muPsidL{n} \dH\dx \\
	&\quad\quad - \frac{1-\viscFrac}{\DE} \intOmSph \lrcb{ \QL(\psiL{n}) - \QL(\psiL{n} + \regDelta) } \nablag \potL{n} \cdot \nablag \muPsidL{n} \dH\dx \\
	&\quad=: I + II + III + IV.
	\end{align*}
	
	We infer from \eqref{reg:QL_leq}, the nonnegativity of $\psiL{n}$, and \eqref{eq:mass_consv_psiLn} that
	\begin{align}
	\label{discr_entr:I}
	\begin{split}
	\abs{I}
	&\leq \frac{(1-\viscFrac)\DE}{4} \intOmSph \QL(\psiL{n}) \lrcb{\nablax\uL{n} : \mom}^2 \dHm\dx \\
	&\quad + \frac{\activity^2 (1 - \viscFrac)}{\DE} \intOmSph \psiL{0} \dH\dx.
	\end{split}
	\end{align}
	The first term on the right-hand side of \eqref{discr_entr:I} can be absorbed into the left-hand side of \eqref{discr_entr:u_psi_combined}.
	Next, similarly to \eqref{fp:ex:iii:III}, we deduce from the Lipschitz continuity of $\QL$, the nonnegativity of $\psiL{n}$, \eqref{eq:nabla_potLk_bound}, \eqref{reg:QL_leq}, \eqref{eq:mass_consv_psiLn}, $0 < \regDelta < 1$, and Young's inequality that
	\begin{align}
    \label{discr_entr:II}
    \begin{split}
	\abs{II}
	&\leq \frac{1-\viscFrac}{4 \DE} \intOmSph \QL(\psiL{n} + \regDelta) \abs{\nablag \muPsidL{n}}^2 \dH\dx  \\
	&\quad + C_2\lrcb{\DE^{-1}, \viscFrac, \CmollEps, \potStr, d, \Omega} \norm{\psiL{0}}_{\LpOS{1}}^2 \lrcb{ 1 + \norm{\psiL{0}}_{\LpOS{1}} } \\
	&\quad + C_1(\DE, \viscFrac, d) \regDelta \intOm \abs{\nablax \uL{n}}^2 \dx,
    \end{split}
	\end{align}
	where $C_1 = C_1(\DE, \viscFrac, d) > 0$ and $C_2 = C_2\lrcb{\DE^{-1}, \viscFrac, \CmollEps, \potStr, d, \Omega} > 0$ are independent of $\regDelta$, $\tau$, and $\regL$.
	Furthermore, on noting $\QL(s) \leq \QL(s + \regDelta)$ for all $s \geq 0$, the Lipschitz continuity of $\QL$, the nonnegativity of $\psiL{n}$, \eqref{eq:nabla_potLk_bound}, and Young's inequality, we have that
	\begin{align}
	\label{discr_entr:III}
	\begin{split}
	\abs{III}
	&\leq \frac{\mollEps^2 (1-\viscFrac)}{2 \DE} \intOmSph \QL(\psiL{n} + \regDelta) \abs{\nablax \muPsidL{n}}^2 \dH\dx \\
	&\quad + C_3\lrcb{ \DE^{-1}, \mollEps, \viscFrac, \Omega, \CmollEps, \potStr, d } \regDelta \norm{\psiL{0}}_{\LpOS{1}}^2,
	\end{split}
	\end{align}
	where $C_3 = C_3\lrcb{ \DE^{-1}, \mollEps, \viscFrac, \Omega, \CmollEps, \potStr, d } > 0$ is independent of $\regDelta$, $\tau$, and $\regL$.
	Analogously to \eqref{discr_entr:III}, it follows that
	\begin{align*}
	\abs{IV}
	&\leq \frac{1-\viscFrac}{4 \DE} \intOmSph \QL(\psiL{n} + \regDelta) \abs{\nablag \muPsidL{n}}^2 \dH\dx \\
	&\quad + C_4\lrcb{\DE^{-1}, \viscFrac, \Omega, \CmollEps, \potStr, d} \regDelta \norm{\psiL{0}}_{\LpOS{1}}^2, \numberthis[discr_entr:IV]
	\end{align*}
	where $C_4 = C_4\lrcb{\DE^{-1}, \viscFrac, \Omega, \CmollEps, \potStr, d} > 0$ is independent of $\regDelta$, $\tau$, and $\regL$.
	Now, on combining \eqref{discr_entr:u_psi_combined}-\eqref{discr_entr:IV}, restricting the range of $\regDelta$ to
	\begin{align*}
	0 < \regDelta < \frac{\viscFrac \DE}{2 C_1},
	\end{align*}
	and applying a discrete integration in time, we have that
	\begin{align*}
	&\frac{\RE \DE}{2} \intOm \abs{\uL{n}}^2 \dx + \frac{\RE \DE}{2} \sum_{k=1}^{n} \intOm \abs{\uL{k} - \uL{k-1}}^2 \dx + \frac{\viscFrac \DE}{2} \sum_{k=1}^{n} \tau \intOm \abs{\nablax \uL{k}}^2 \dx \\
	& \qquad + \frac{(1-\viscFrac) \DE}{4} \sum_{k=1}^{n} \tau \intOmSph \QL(\psiL{k}) \lrcb{ \nablax \uL{k} : \mom }^2 \dHm\dx \\
	& \qquad + (1-\viscFrac) \intOmSph \entropL(\psiL{n} + \regDelta) \dH\dx + \frac{1-\viscFrac}{2} \intOmSph \psiL{n} \pot[\psiL{n}] \dH\dx \\
	&\qquad + \frac{1-\viscFrac}{2 \regL} \sum_{k=1}^n \intOmSph \abs{ \psiL{k} - \psiL{k-1} }^2 \dH\dx \\
	& \qquad + \frac{(1-\viscFrac) \mollEps^2}{2 \DE} \sum_{k=1}^{n} \tau \intOmSph \QL(\psiL{k} + \regDelta) \abs{ \nablax \muPsidL{k} }^2 \dH\dx \\
	& \qquad + \frac{1-\viscFrac}{2 \DE} \sum_{k=1}^{n} \tau \intOmSph \QL(\psiL{k} + \regDelta) \abs{ \nablag \muPsidL{k} }^2 \dH\dx \\
	&\quad\leq \frac{\RE \DE}{2} \intOm \abs{\uL{0}}^2 \dx + (1-\viscFrac) \intOmSph \entropL(\psiL{0} + \regDelta) \dH\dx \numberthis[discr_entr:u_psi_I_IV_combined] \\
	&\quad\quad + \frac{1-\viscFrac}{2} \intOmSph \psiL{0} \pot[\psiL{0}] \dH\dx + \frac{\activity^2 (1 - \viscFrac) T}{\DE} \norm{\psiL{0}}_{\LpOS{1}} \\
	& \quad\quad + C_2 T \norm{\psiL{0}}_{\LpOS{1}}^2 \lrcb{ 1 + \norm{\psiL{0}}_{\LpOS{1}} } \\
	& \quad\quad + (C_3 + C_4) T \regDelta \norm{\psiL{0}}_{\LpOS{1}}^2 \qquad \qquad \qquad  \text{ for all } n \in \{ 1, \dots, N \}.
	\end{align*}
	
    Next, we infer from \eqref{eq:nabla_potLk_bound}, \eqref{reg:FLQL_leq_FL}, \eqref{eq:mass_consv_psiLn}, \eqref{init:regularity_psi_L_0}, and applying Lemma of Fatou (cf. \cite[p. 24]{chen_liu_2013_entropy_solution}, \cite[p. 1244f]{barrett_sueli_2011_polymers_I}) that
    \begin{align}
    \label{discr_entr:limit:diffusive_x}
    \begin{split}
    & \frac{(1-\viscFrac)\mollEps^2}{2 \DE} \lim_{\regDelta \searrow 0} \intOmSph \QL(\psiL{k} + \regDelta) \abs{ \nablax \muPsidL{k} }^2 \dH\dx \\
    &\quad\geq \frac{(1-\viscFrac)\mollEps^2}{4 \DE} \lim_{\regDelta \searrow 0} \intOmSph \frac{\abs{\nablax \psiL{k}}^2}{\psiL{k} + \regDelta} \dH\dx - C(\CmollEps, \potStr, d, \Omega, \viscFrac, \mollEps, \DE^{-1}, \psi_0) \\
    &\quad\geq \frac{(1-\viscFrac)\mollEps^2}{\DE} \intOmSph \abs{\nablax \sqrt{\psiL{k}}}^2 \dH\dx - C(\CmollEps, \potStr, d, \Omega, \viscFrac, \mollEps, \DE^{-1}, \psi_0).
    \end{split}
    \end{align}
    We apply similar computations for the term $\QL(\psiL{k} + \regDelta) \abs{\nablag \muPsidL{k}}^2$ (cf. \cite[p. 24]{chen_liu_2013_entropy_solution}). Finally, on noting \eqref{discr_entr:limit:diffusive_x}, the regularities of the initial data \eqref{init:regularity_u_L_0}, \eqref{init:regularity_psi_L_0}, \eqref{init:psi_0_pot_0}, and \eqref{reg:FLQL_leq_FL}, applying Lemma of Fatou and the Dominated Convergence Theorem in \eqref{discr_entr:u_psi_I_IV_combined} yields the entropy estimate \eqref{est:discrete_entropy} when passing to the limit as $\regDelta \searrow 0$.
\end{proof}

\subsection{Regularity of the polymer number density}
\label{sec:particle_density}
The discrete-in-time entropy estimate \eqref{est:discrete_entropy} is a good starting point to establish regularity results for $\uL{n}$ and $\psiL{n}$. However, the bounds in \eqref{est:discrete_entropy} are too weak to extract strongly converging subsequences. Unfortunately, testing \eqref{eq:PLtau:psi} with $\theta := \psiL{n}$ is fraud with difficulties to improve the regularity of $\psiL{n}$.
Instead, we shall establish bounds on the discrete-in-time polymer number density
\begin{align}
\label{def:omegaL}
\omegaL{n}(\xflow) := \intSph \psiL{n}(\xflow, \mflow) \dHm \quad \text{ for } \xflow \in \Omega,\ n \in \{0, \dots, N\}.
\end{align}

\begin{lmm}
	\label{lmm:regularity_omegaL}
	Suppose that the assumptions \eqref{assumptions:domain_init_data} hold.
	Moreover, let the time increment satisfy 
	\begin{align*}
	0 < \tau < \frac{1}{4 \widetilde{C} \norm{\psi_0}_{\LpOS{1}}^2},
	\end{align*}
	where $\widetilde{C} > 0$ depends only on $\DE^{-1}, \mollEps, \CmollEps, \potStr$, and $d$. Then, the following estimate holds for $n \in \{1, \dots, N\}$
	\begin{align}
	\label{eq:regularity_omegaL}
	\begin{split}
	&\frac{1}{4} \intOm \abs{\omegaL{n}}^2 \dx + \frac{1}{2} \sum_{k=1}^{n} \intOm \abs{\omegaL{k} - \omegaL{k-1}}^2 \dx + \frac{\mollEps^2}{2 \DE} \sum_{k=1}^{n} \tau \intOm \abs{ \nablax \omegaL{k} }^2 \dx \\
	&\quad\leq \lrcb{ 1 + \widetilde{C} \norm{\psi_0}_{\LpOS{1}}^2 T \exp\lrcb{ \widetilde{C} \norm{\psi_0}_{\LpOS{1}}^2 T } } \norm{\psi_0}_{\L^2(\Omega; \L^1(\Sph))}^2 \\
	&\quad< \infty.
	\end{split}
	\end{align}
\end{lmm}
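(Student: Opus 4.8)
The idea is to test the discrete Smoluchowski equation \eqref{eq:PLtau:psi} only with functions $\theta\in\H^1(\Omega)$ (regarded as elements of $\H^1(\OmSph)$ that are constant in $\mflow$), which amounts to integrating \eqref{eq:PLtau:psi} over $\Sph$. All surface‑gradient terms then drop out, the $\nablax$‑diffusion of $\psiL{n}$ contracts to $\nablax\omegaL{n}$, and one obtains the closed identity
\begin{align*}
&\intOm\frac{\omegaL{n}-\omegaL{n-1}}{\tau}\,\theta\dx-\intOm\omegaL{n}\,\uL{n}\cdot\nablax\theta\dx+\frac{\mollEps^2}{\DE}\intOm\nablax\omegaL{n}\cdot\nablax\theta\dx\\
&\qquad+\frac{\mollEps^2}{\DE}\intOm\lrcb{\intSph\QZL(\psiL{n})\,\nablax\potL{n}\dHm}\cdot\nablax\theta\dx=0
\end{align*}
for all $\theta\in\H^1(\Omega)$. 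Note that $\omegaL{n}\in\H^1(\Omega)$: from $\nablax\omegaL{n}=\intSph\nablax\psiL{n}\dHm$ and Cauchy--Schwarz in $\mflow$ one gets $\norm{\nablax\omegaL{n}}_{\L^2(\Omega)}\leq\abs{\Sph}^{1/2}\norm{\nablax\psiL{n}}_{\L^2(\OmSph)}<\infty$, so $\omegaL{n}$ is an admissible test function.

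Next I would take $\theta:=\omegaL{n}$. The convective term vanishes, $\intOm\omegaL{n}\,\uL{n}\cdot\nablax\omegaL{n}\dx=\tfrac12\intOm\uL{n}\cdot\nablax(\omegaL{n})^2\dx=0$, since $\Divx\uL{n}=0$ and $\uL{n}\in\Hdiv$ has zero trace; the discrete time‑derivative is handled by the identity \eqref{eq:aab}; and the last term is estimated by Cauchy--Schwarz and Young's inequality, absorbing one half of $\tfrac{\mollEps^2}{\DE}\norm{\nablax\omegaL{n}}_{\L^2(\Omega)}^2$ on the left. The one point that really matters — and the only place where care is needed to keep the estimate independent of the cut‑off $\regL$ — is the pointwise bound on the interaction‑potential drift: since $\psiL{n}\geq0$ a.e.\ by Corollary~\ref{crllr:nonnegativity}, we have $\QZL(\psiL{n})=\QL(\psiL{n})\leq\psiL{n}$, hence $\intSph\QZL(\psiL{n})\dHm\leq\omegaL{n}$, and together with the uniform potential bound \eqref{eq:nabla_potLk_bound} this gives
\begin{align*}
\abs{\intSph\QZL(\psiL{n})(\xflow,\mflow)\,\nablax\potL{n}(\xflow,\mflow)\dHm}\leq\norm{\nablax\potL{n}}_{\L^\infty(\OmSph)}\,\omegaL{n}(\xflow)\leq C(\CmollEps,\potStr,d)\norm{\psi_0}_{\LpOS{1}}\,\omegaL{n}(\xflow).
\end{align*}
Choosing $\widetilde C$ to be a sufficiently large multiple of $\mollEps^2\DE^{-1}C(\CmollEps,\potStr,d)^2$, these steps yield, for every $n\in\{1,\dots,N\}$,
\begin{align*}
\lrcb{1-\tau\widetilde C\norm{\psi_0}_{\LpOS{1}}^2}\intOm\abs{\omegaL{n}}^2\dx+\intOm\abs{\omegaL{n}-\omegaL{n-1}}^2\dx+\frac{\mollEps^2\tau}{\DE}\intOm\abs{\nablax\omegaL{n}}^2\dx\leq\intOm\abs{\omegaL{n-1}}^2\dx.
\end{align*}

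Finally, under the stated restriction $0<\tau<(4\widetilde C\norm{\psi_0}_{\LpOS{1}}^2)^{-1}$ the prefactor exceeds $3/4$, so a standard discrete Gronwall argument applied to $\norm{\omegaL{k}}_{\L^2(\Omega)}^2$ first gives the uniform bound $\norm{\omegaL{k}}_{\L^2(\Omega)}^2\leq\norm{\omegaL{0}}_{\L^2(\Omega)}^2\exp(\widetilde C\norm{\psi_0}_{\LpOS{1}}^2T)$ for all $k$. Telescoping the one‑step inequality over $k=1,\dots,n$ and reinserting this bound into the resulting term $\tau\widetilde C\norm{\psi_0}_{\LpOS{1}}^2\sum_{k=1}^n\norm{\omegaL{k}}_{\L^2(\Omega)}^2$, and using $\norm{\omegaL{0}}_{\L^2(\Omega)}\leq\norm{\psi_0}_{\L^2(\Omega;\L^1(\Sph))}$ (because $\psiL{0}=\QL(\psi_0)\leq\psi_0$ by \eqref{init:def:psi_L_0} and \eqref{reg:QL_leq}), one arrives exactly at \eqref{eq:regularity_omegaL}, the coefficients $\tfrac14,\tfrac12,\tfrac{\mollEps^2}{2\DE}$ being conservative lower bounds for what comes out. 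I expect the only genuine subtlety to be the one highlighted above: one must \emph{not} test \eqref{eq:PLtau:psi} directly with $\psiL{n}$ (which would introduce $\regL$‑dependent quantities via $\QZL(\psiL{n})\nablax\potL{n}$ paired against $\nablax\psiL{n}$), but instead reduce the interaction‑potential drift to a quantity controlled by $\omegaL{n}$ alone through $\QZL(\psiL{n})\leq\psiL{n}$ and the $\L^\infty$ bound on $\nablax\potL{n}$; everything else is a routine parabolic energy estimate.
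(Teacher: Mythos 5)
Your proposal is correct and follows essentially the same route as the paper: testing \eqref{eq:PLtau:psi} with the $\mflow$-independent function $\theta:=\omegaL{n}\in\H^1(\Omega)$, killing the convective term by $\Divx\uL{n}=0$, bounding the interaction-potential drift via $\QZL(\psiL{n})\leq\psiL{n}$ together with \eqref{eq:nabla_potLk_bound} and \eqref{eq:mass_consv_psiLn}, absorbing half of the $\nablax\omegaL{n}$-term by Young, and concluding with a discrete Gronwall argument under the stated restriction on $\tau$. The only differences are cosmetic (you absorb the $\tau\widetilde C\norm{\psi_0}_{\LpOS{1}}^2\norm{\omegaL{n}}_{\L^2(\Omega)}^2$ term before summing rather than after), so no further comment is needed.
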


\begin{proof}
	We deduce from choosing $\theta := \omegaL{n} \in \H^1(\Omega)$ in \eqref{eq:PLtau:psi}, \eqref{eq:nabla_potLk_bound}, \eqref{reg:QL_leq}, \eqref{eq:mass_consv_psiLn}, \eqref{def:omegaL},
	\begin{align*}
	\intOm \omegaL{n} \nablax \omegaL{n} \cdot \uL{n} \dx = \intOm \frac{1}{2} \nablax (\omegaL{n})^2 \cdot \uL{n} \dx = 0,
	\end{align*}
	and Young's inequality that
	\begin{align*}
	&\frac{1}{2 \tau} \intOm \left[ \abs{\omegaL{n}}^2 + \abs{\omegaL{n} - \omegaL{n-1}}^2  - \abs{\omegaL{n-1}}^2 \right] \dx + \frac{\mollEps^2}{\DE} \intOm \abs{ \nablax \omegaL{n} }^2 \dx \\
	&\quad= -\frac{\mollEps^2}{\DE} \intOmSph \QL(\psiL{n}) \nablax \potL{n} \cdot \nablax \omegaL{n} \dH\dx \\
    &\quad\leq \frac{\mollEps^2}{2 \DE} \intOm \abs{\nablax \omegaL{n}}^2 \dx + C\lrcb{\DE^{-1}, \mollEps, \CmollEps, \potStr, d} \norm{\psi_0}_{\LpOS{1}}^2 \intOm \abs{\omegaL{n}}^2 \dx.
	\end{align*}
	
	Thus, a discrete integration in time and \eqref{init:psi_0} yield
	\begin{align}
	\label{eq:omegaL_combined}
	\begin{split}
	&\frac{1}{2} \intOm \abs{\omegaL{n}}^2 \dx + \frac{1}{2} \sum_{k=1}^{n} \intOm \abs{\omegaL{k} - \omegaL{k-1}}^2 \dx + \frac{\mollEps^2}{2 \DE} \sum_{k=1}^{n} \tau \intOm \abs{\nablax \omegaL{k}}^2 \dx \\
	&\quad\leq \frac{1}{2} \norm{\psi_0}_{\L^2(\Omega; \L^1(\Sph))}^2 + C_1\lrcb{\DE^{-1}, \mollEps, \CmollEps, \potStr, d} \norm{\psi_0}_{\LpOS{1}}^2 \sum_{k=1}^n \tau \intOm \abs{\omegaL{k}}^2 \dx
	\end{split}
	\end{align}
	for $n \in \{1, \dots, N\}$, where $C_1 = C_1\lrcb{\DE^{-1}, \mollEps, \CmollEps, \potStr, d} > 0$ is independent of $\tau$ and $\regL$.
    We complete the proof by restricting the range of the time increment to 
    \begin{align*}
    0 < \tau < \frac{1}{4 C_1 \norm{ \psi_0 }_{\LpOS{1}}^2},
    \end{align*}
    and by applying a discrete version of Gronwall's inequality (cf. \cite[Lemma 4.3.2]{werner1986ode}).
\end{proof}

\subsection{Time regularity}
\label{sec:time_regularity}
In this section, we derive bounds on the discrete time derivatives of $\uL{n}$ and $\psiL{n}$ in appropriate spaces to apply compactness results to be able to extract strongly converging subsequences.

\begin{lmm}[Time regularity of $\uflow$]
	\label{lmm:time_reg:uL}
	Suppose that the assumptions \eqref{assumptions:domain_init_data} hold and that $\tau > 0$ satisfies the condition of Lemma \ref{lmm:regularity_omegaL}.
	Then, we have that
	\begin{align}
	\label{eq:time_reg:uL}
	\sum_{k=1}^N \tau \norm{ \frac{\uL{k} - \uL{k-1}}{\tau} }_{ (\Vk{2})^\prime}^2 \leq C,
	\end{align}
	where $C>0$ depends solely on $\RE, \RE^{-1}, \DE, \DE^{-1}, \viscFrac, \viscFrac^{-1}, \activity, T$, $\CmollEps, \mollEps, \potStr, d, \Omega, \uflow_0,$ and $\psi_0$.
\end{lmm}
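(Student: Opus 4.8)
The plan is to test the discrete momentum balance \eqref{eq:PLtau:u} with $\wflow\in\Vk{2}$, isolate the discrete time derivative $\dtau\uL{n}:=(\uL{n}-\uL{n-1})/\tau$, and estimate each of the remaining terms by $C\norm{\wflow}_{\Vk{2}}$ times a factor whose square is summable after multiplication by $\tau$, thanks to the entropy estimate \eqref{est:discrete_entropy} and to Lemma \ref{lmm:regularity_omegaL}. Since $\Vk{2}\hookrightarrow\Vk{1}=\Hdiv$, \eqref{eq:PLtau:u} is available for all $\wflow\in\Vk{2}$, and as $\uL{n}-\uL{n-1}\in\Hdiv$ the pairing $\langle\dtau\uL{n},\wflow\rangle_{(\Vk{2})^\prime\times\Vk{2}}$ coincides with $\intOm\dtau\uL{n}\cdot\wflow\dx$; hence it suffices to bound $\RE\DE\,|\intOm\dtau\uL{n}\cdot\wflow\dx|$ uniformly over $\norm{\wflow}_{\Vk{2}}\le1$. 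Throughout I would use, for $d\in\{2,3\}$, the embeddings $\Vk{2}\hookrightarrow\Lflow^\infty(\Omega)$ and $\Vk{2}\hookrightarrow\WMat^{1,6}(\Omega)$; the mass conservation $\intSph\psiL{n}\dHm=\omegaL{n}$ with $\norm{\psiL{n}}_{\LpOS{1}}=\norm{\psi_0}_{\LpOS{1}}$ (cf.\ \eqref{eq:mass_consv_psiLn}); the bound $\QZL(\psiL{n})=\QL(\psiL{n})\le\psiL{n}$ from \eqref{reg:QL_leq} and the nonnegativity of $\psiL{n}$; the uniform bounds $\norm{\uL{n}}_{\L^2(\Omega)}\le C$ and $\norm{\potL{n}}_{\W^{1,\infty}(\OmSph)}\le C$ coming from \eqref{est:discrete_entropy} and \eqref{eq:nabla_potLk_bound}; and, crucially, the uniform bound $\norm{\omegaL{n}}_{\L^2(\Omega)}\le C$ supplied by Lemma \ref{lmm:regularity_omegaL}, which is why the smallness hypothesis on $\tau$ is inherited.

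First I would dispose of the easily controlled terms on the right-hand side of \eqref{eq:PLtau:u}. The convective term is dominated by $\RE\DE\norm{\uL{n-1}}_{\L^2(\Omega)}\norm{\nablax\uL{n}}_{\L^2(\Omega)}\norm{\wflow}_{\L^\infty(\Omega)}\le C\norm{\uL{n}}_{\H^1(\Omega)}\norm{\wflow}_{\Vk{2}}$, using the uniform $\L^2$-bound on $\uL{n-1}$ and $\Vk{2}\hookrightarrow\Lflow^\infty(\Omega)$; the solvent viscous term by $\viscFrac\DE\norm{\nablax\uL{n}}_{\L^2(\Omega)}\norm{\nablax\wflow}_{\L^2(\Omega)}\le C\norm{\uL{n}}_{\H^1(\Omega)}\norm{\wflow}_{\Vk{2}}$. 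In the body force, elastic, $(d\mom-\IdMat)$- and active terms the tensorial factors are bounded — using $\norm{\potL{n}}_{\W^{1,\infty}(\OmSph)}\le C$ where it is needed — and, since $|\wflow|$ and $|\nablax\wflow|$ do not depend on $\mflow$, the $\Sph$-integral collapses $\QL(\psiL{n})$ (respectively $\psiL{n}$) onto $\omegaL{n}$; a Cauchy--Schwarz inequality in $\Omega$ then yields a bound of the type $C\norm{\omegaL{n}}_{\L^2(\Omega)}\norm{\wflow}_{\H^1(\Omega)}\le C\norm{\wflow}_{\Vk{2}}$. Collected together, these contributions are bounded by $C(1+\norm{\uL{n}}_{\H^1(\Omega)})\norm{\wflow}_{\Vk{2}}$.

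The delicate term — and the step I expect to be the main obstacle — is the polymer viscous contribution $\frac{(1-\viscFrac)\DE}{2}\intOmSph\QL(\psiL{n})(\nablax\uL{n}:\mom)(\nablax\wflow:\mom)\dHm\dx$. One cannot simply pull $\nablax\wflow$ out in $\Lflow^\infty$ and pair the remainder with the entropic dissipation $\QL(\psiL{n})(\nablax\uL{n}:\mom)^2$, because $\Vk{2}$ fails to embed into $\WMat^{1,\infty}(\Omega)$ when $d=3$. Instead I would apply the Cauchy--Schwarz inequality with respect to the measure $\QL(\psiL{n})\dHm\dx$,
\[
\Big|\intOmSph\QL(\psiL{n})(\nablax\uL{n}:\mom)(\nablax\wflow:\mom)\dHm\dx\Big|\le\Big(\intOmSph\QL(\psiL{n})(\nablax\uL{n}:\mom)^2\dHm\dx\Big)^{1/2}\Big(\intOmSph\QL(\psiL{n})|\nablax\wflow|^2\dHm\dx\Big)^{1/2},
\]
and then, using that $|\nablax\wflow|$ is $\mflow$-independent together with $\QL(\psiL{n})\le\psiL{n}$, Hölder's inequality (with exponents $3/2$ and $3$) and $\Vk{2}\hookrightarrow\WMat^{1,6}(\Omega)$, estimate $\intOmSph\QL(\psiL{n})|\nablax\wflow|^2\dHm\dx\le\intOm\omegaL{n}|\nablax\wflow|^2\dx\le\norm{\omegaL{n}}_{\L^{3/2}(\Omega)}\norm{\nablax\wflow}_{\L^6(\Omega)}^2\le C\norm{\omegaL{n}}_{\L^2(\Omega)}\norm{\wflow}_{\Vk{2}}^2\le C\norm{\wflow}_{\Vk{2}}^2$, the last inequality by Lemma \ref{lmm:regularity_omegaL}. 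Thus the polymer viscous term is bounded by $C\big(\intOmSph\QL(\psiL{n})(\nablax\uL{n}:\mom)^2\dHm\dx\big)^{1/2}\norm{\wflow}_{\Vk{2}}$.

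Finally, taking the supremum over $\norm{\wflow}_{\Vk{2}}\le1$ yields $\norm{\dtau\uL{n}}_{(\Vk{2})^\prime}^2\le C\big(1+\norm{\uL{n}}_{\H^1(\Omega)}^2+\intOmSph\QL(\psiL{n})(\nablax\uL{n}:\mom)^2\dHm\dx\big)$; multiplying by $\tau$, summing over $n\in\{1,\dots,N\}$, and invoking \eqref{est:discrete_entropy}, which controls $\sum_{k=1}^{N}\tau\norm{\uL{k}}_{\H^1(\Omega)}^2$ and $\sum_{k=1}^{N}\tau\intOmSph\QL(\psiL{k})(\nablax\uL{k}:\mom)^2\dHm\dx$, produces \eqref{eq:time_reg:uL} with a constant of the stated dependence.
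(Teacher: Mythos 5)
Your proposal is correct and follows essentially the same route as the paper's proof: testing \eqref{eq:PLtau:u} with $\wflow\in\Vk{2}$, collapsing the $\Sph$-integrals onto $\omegaL{n}$ and using its $\L^2(\Omega)$ bound from Lemma \ref{lmm:regularity_omegaL} together with \eqref{eq:nabla_potLk_bound}, handling the polymer viscous term by Cauchy--Schwarz with respect to the measure $\QL(\psiL{n})\dHm\dx$ (the paper uses the same splitting, only with $\norm{\nablax\wflow}_{\L^4(\Omega)}\norm{\omegaL{n}}_{\L^2(\Omega)}^{1/2}$ instead of your $\L^6$--$\L^{3/2}$ pairing), and then summing and invoking \eqref{est:discrete_entropy}. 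No gap.
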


\begin{proof}
	Let $\wflow \in \Vk{2} \hookrightarrow \L^\infty(\Omega)$. Then, we infer from \eqref{eq:PLtau:u} and \eqref{reg:QL_leq} that, for $n \in \{1, \dots, N\}$,
	\begin{align*}
	&\RE\DE \abs{ \intOm \frac{\uL{n} - \uL{n-1}}{\tau} \cdot \wflow \dx } \\
	&\quad \leq \RE\DE \intOm \abs{\uL{n-1}} \abs{\nablax \uL{n}} \abs{\wflow} \dx + \viscFrac\DE \norm{\nablax \uL{n}}_{\L^2(\Omega)} \norm{\nablax \wflow}_{\L^2(\Omega)} \\
	&\qquad + \frac{(1-\viscFrac) \DE}{2} \intOmSph \QL(\psiL{n}) \abs{\nablax \uL{n} : \mom} \abs{\nablax \wflow : \mom} \dHm \dx \\
	&\qquad + (1-\viscFrac) \intOmSph \abs{\psiL{n}} \abs{\nablax \potL{n}} \abs{\wflow} \dH\dx  \\
	&\qquad + (1-\viscFrac) C(d) \intOmSph \abs{\psiL{n}} \abs{\nablag \potL{n}} \abs{\nablax \wflow} \dH\dx \numberthis[eq:tr:uL:tested] \\
	&\qquad + (1-\viscFrac) \lrcb{ C(d) + \activity } \intOmSph \abs{\psiL{n}} \abs{\nablax \wflow} \dH\dx \\
	&\quad =: I + II + III + IV + V + VI.
	\end{align*}
	
	First, on noting \eqref{eq:mass_consv_psiLn}, \eqref{reg:QL_leq}, \eqref{init:regularity_psi_L_0}, and \eqref{eq:regularity_omegaL}, applying H\"older's inequality yields
	\begin{align*}
	III
	&\leq C(\viscFrac, \DE, d) \lrcb{ \intOmSph \QL(\psiL{n}) \lrcb{ \nablax \uL{n} : \mom }^2 \dHm \dx }^\frac{1}{2} \cdot \\
	&\qquad\qquad\qquad \cdot \norm{\nablax \wflow}_{\L^4(\Omega)} \norm{\omegaL{n}}_{\L^2(\Omega)}^{\frac{1}{2}} \\
	&\leq C \lrcb{ \intOmSph \QL(\psiL{n}) \lrcb{ \nablax \uL{n} : \mom }^2 \dHm \dx }^\frac{1}{2} \norm{\wflow}_{\Vk{2}}, \numberthis[eq:tr:uL:III]
	\end{align*}
	where $C = C(\viscFrac, \DE, d, \DE^{-1}, \mollEps, \CmollEps, \potStr, \psi_0) >0$.
	Furthermore, we deduce from \eqref{eq:nabla_potLk_bound}, \eqref{def:omegaL}, and \eqref{eq:regularity_omegaL} that
	\begin{align}
	\label{eq:tr:uL:IV_V}
	\begin{split}
	IV + V
	&\leq C\lrcb{ \viscFrac, d, \CmollEps, \potStr } \norm{\psi_0}_{\LpOS{1}} \lrcb{ \max_{k \in \{1, \dots, N\}} \norm{\omegaL{k}}_{\L^2(\Omega)} } \norm{\wflow}_{\Vk{2}} \\
	&\leq C\lrcb{\viscFrac, d, \CmollEps, \potStr, \DE^{-1}, \mollEps, T, \psi_0} \norm{\wflow}_{\Vk{2}}.
	\end{split}
	\end{align}
    
    The other terms in \eqref{eq:tr:uL:tested} are handled easily such that combining \eqref{eq:tr:uL:tested} - \eqref{eq:tr:uL:IV_V} yields
	\begin{align}
	\label{eq:tr:uL:combined}
	\begin{split}
	&\abs{ \intOm \frac{\uL{n} - \uL{n-1}}{\tau} \cdot \wflow \dx } \\
	&\quad\leq C  \bigg( 1 + \norm{\nablax \uL{n}}_{\L^2(\Omega)} \\
	&\qquad\qquad + \lrcb{ \intOmSph \QL(\psiL{n}) \lrcb{ \nablax \uL{n} : \mom }^2 \dHm\dx }^\frac{1}{2} \bigg) \norm{\wflow}_{\Vk{2}},
	\end{split}
	\end{align}
	where $C = C\lrcb{ \RE, \RE^{-1}, \DE, \DE^{-1}, \viscFrac, \activity, T, \CmollEps, \mollEps, \potStr, d, \Omega, \uflow_0, \psi_0 } > 0$.
	Hence, on noting \eqref{est:discrete_entropy}, multiplying \eqref{eq:tr:uL:combined} by $\tau$ and summing up from $n = 1 \to N$, we complete the proof.
\end{proof}

\begin{lmm}[Time regularity of $\psi$]
	\label{lmm:time_reg:psiL}
	Suppose that the assumptions \eqref{assumptions:domain_init_data} hold and that $\tau > 0$ satisfies the condition of Lemma \ref{lmm:regularity_omegaL}.
	Then, we have that
	\begin{align}
	\label{eq:time_reg:psiL}
	\sum_{k=1}^N \tau \norm{\frac{\psiL{k} - \psiL{k-1}}{\tau}}_{ \lrcb{\HkOS{4}}^\prime }^2 \leq C,
	\end{align}
	where $C>0$ depends solely on $\Omega, \RE, \RE^{-1}, \DE, \DE^{-1}, \viscFrac, \viscFrac^{-1}, \activity, T, \CmollEps, \mollEps, \mollEps^{-1}, \potStr, d, \uflow_0$, and $\psi_0$.
\end{lmm}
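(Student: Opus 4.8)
The plan is to mimic the proof of Lemma~\ref{lmm:time_reg:uL}: fix a test function $\theta \in \HkOS{4}$, use it in \eqref{eq:PLtau:psi}, and estimate each of the four remaining terms on the left-hand side against $\norm{\theta}_{\HkOS{4}}$. The reason for choosing the high Sobolev order $4$ is the continuous embedding $\HkOS{4} \hookrightarrow \W^{1,\infty}(\OmSph)$, which holds because $\dim(\OmSph) = 2d-1 \le 5$ (Morrey, with $\Omega$ Lipschitz by \eqref{eq:domain} and $\Sph$ smooth); it is needed because the convective term $\intOmSph \psiL{n} \uL{n} \cdot \nablax \theta \dH\dx$ and the velocity-coupling term $\intOmSph \QZL(\psiL{n}) \lrcb{(\IdMat - \mom)\nablax\uL{n}\mflow}\cdot\nablag\theta\dHm\dx$ are only controlled in $\L^1(\OmSph)$, so one cannot afford less than $\nabla\theta \in \L^\infty$.

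Next I would estimate the individual terms, writing $\norm{\nablax\theta}_{\L^\infty(\OmSph)} + \norm{\nablag\theta}_{\L^\infty(\OmSph)} \le C\norm{\theta}_{\HkOS{4}}$ throughout. Using $\QZL(\psiL{n}) \le \psiL{n}$ and the nonnegativity of $\psiL{n}$ (cf.\ \eqref{reg:QL_leq} and Corollary~\ref{crllr:nonnegativity}) together with the identity $\intSph \psiL{n}(\xflow,\mflow)\dHm = \omegaL{n}(\xflow)$ from \eqref{def:omegaL} (note $\abs{\uL{n}}$ and $\abs{\nablax\uL{n}}$ do not depend on $\mflow$), the convective and velocity-coupling terms are bounded by $C\norm{\omegaL{n}}_{\L^2(\Omega)}\big(\norm{\uL{n}}_{\L^2(\Omega)} + \norm{\nablax\uL{n}}_{\L^2(\Omega)}\big)\norm{\theta}_{\HkOS{4}}$. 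The interaction-potential diffusive terms are handled by $\QZL(\psiL{n}) \le \psiL{n}$, the $\W^{1,\infty}(\OmSph)$-bound \eqref{eq:nabla_potLk_bound} on $\potL{n}$, and mass conservation \eqref{eq:mass_consv_psiLn}, giving a uniform contribution $\le C(\DE^{-1},\mollEps,\CmollEps,\potStr,d)\norm{\psi_0}_{\LpOS{1}}^2\norm{\theta}_{\HkOS{4}}$. For the linear diffusive term I would use the chain rule $\nablax\psiL{n} = 2\sqrt{\psiL{n}}\,\nablax\sqrt{\psiL{n}}$ (and the analogue for $\nablag$), so that by the Cauchy--Schwarz inequality and $\intOmSph \psiL{n}\dH\dx = \intOmSph\psiL{0}\dH\dx$ one gets $\norm{\nablax\psiL{n}}_{\L^1(\OmSph)} \le 2\norm{\psi_0}_{\LpOS{1}}^{1/2}\norm{\nablax\sqrt{\psiL{n}}}_{\L^2(\OmSph)}$, and likewise for $\nablag\psiL{n}$.

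Collecting these estimates I would obtain, for every $n \in \{1,\dots,N\}$,
\begin{align*}
\abs{\intOmSph \frac{\psiL{n}-\psiL{n-1}}{\tau}\theta\dH\dx}
&\le C\Big(1 + \norm{\omegaL{n}}_{\L^2(\Omega)}\norm{\uL{n}}_{\L^2(\Omega)} + \norm{\omegaL{n}}_{\L^2(\Omega)}\norm{\nablax\uL{n}}_{\L^2(\Omega)} \\
&\qquad + \norm{\nablax\sqrt{\psiL{n}}}_{\L^2(\OmSph)} + \norm{\nablag\sqrt{\psiL{n}}}_{\L^2(\OmSph)}\Big)\norm{\theta}_{\HkOS{4}},
\end{align*}
whence $\norm{(\psiL{n}-\psiL{n-1})/\tau}_{\lrcb{\HkOS{4}}^\prime}$ is dominated by the bracketed quantity. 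I would then square this bound, multiply by $\tau$, and sum from $n=1$ to $N$: the resulting sum is finite because the entropy estimate \eqref{est:discrete_entropy} controls $\sup_k\norm{\uL{k}}_{\L^2(\Omega)}$, $\sum_k\tau\norm{\nablax\uL{k}}_{\L^2(\Omega)}^2$, $\sum_k\tau\norm{\nablax\sqrt{\psiL{k}}}_{\L^2(\OmSph)}^2$ and $\sum_k\tau\norm{\nablag\sqrt{\psiL{k}}}_{\L^2(\OmSph)}^2$, while Lemma~\ref{lmm:regularity_omegaL} (applicable because $\tau$ satisfies its hypothesis) controls $\sup_k\norm{\omegaL{k}}_{\L^2(\Omega)}$; in particular $\sum_k\tau\norm{\omegaL{k}}_{\L^2(\Omega)}^2\norm{\nablax\uL{k}}_{\L^2(\Omega)}^2 \le \big(\sup_k\norm{\omegaL{k}}_{\L^2(\Omega)}^2\big)\sum_k\tau\norm{\nablax\uL{k}}_{\L^2(\Omega)}^2 < \infty$.

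I expect the only mildly delicate point to be the linear diffusive term: the entropy estimate does not bound $\nabla\psiL{n}$ directly, only $\nabla\sqrt{\psiL{n}}$ in $\L^2(\OmSph)$, so one recovers an $\L^1(\OmSph)$-bound on $\nabla\psiL{n}$ only through the extra factor $\sqrt{\psiL{n}}$ and mass conservation. Everything else is a routine combination of the previously established uniform bounds, and the hypothesis on $\tau$ is inherited from Lemma~\ref{lmm:regularity_omegaL} via the use of \eqref{eq:regularity_omegaL}.
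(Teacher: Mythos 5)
Your proposal is correct and follows essentially the same route as the paper: test \eqref{eq:PLtau:psi} with $\theta \in \HkOS{4} \hookrightarrow \W^{1,\infty}(\OmSph)$, bound the potential terms via \eqref{eq:nabla_potLk_bound} and mass conservation, the convective and velocity-coupling terms via $\omegaL{n}$ and the $\L^2$-bounds on $\uL{n}$ and $\nablax\uL{n}$, the linear diffusive terms via $\nablax\psiL{n}=2\sqrt{\psiL{n}}\,\nablax\sqrt{\psiL{n}}$, and then sum using \eqref{est:discrete_entropy} and Lemma \ref{lmm:regularity_omegaL}. The only difference is that the paper outsources several of these term estimates to a reference, whereas you supply the details explicitly.
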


\begin{proof}
	Let $n \in \{1, \dots, N\}$ and $\theta \in \H^4(\OmSph) \hookrightarrow \W^{1, \infty}(\OmSph)$. Then, we infer from \eqref{eq:PLtau:psi} that
	\begin{align*}
	&\abs{ \intOmSph \frac{\psiL{n} - \psiL{n-1}}{\tau} \theta \dH\dx } \\
	&\quad\leq \intOmSph \abs{\psiL{n}} \abs{\uL{n}} \abs{\nablax \theta} \dH\dx \\
	&\quad\quad + \frac{\mollEps^2}{\DE} \intOmSph \abs{\nablax \psiL{n}} \abs{\nablax \theta} \dH\dx
	+ \frac{1}{\DE} \intOmSph \abs{\nablag \psiL{n}} \abs{\nablag \theta} \dH\dx \\
	&\quad\quad + \frac{\mollEps^2}{\DE} \intOmSph \QL(\psiL{n}) \abs{\nablax \potL{n}} \abs{\nablax \theta} \dH\dx \\
	&\quad\quad+ \frac{1}{\DE} \intOmSph \QL(\psiL{n}) \abs{\nablag \potL{n}} \abs{\nablag \theta} \dH\dx \\
	&\quad\quad + C(d) \intOmSph \QL(\psiL{n}) \abs{\nablax \uL{n}} \abs{\nablag \theta} \dH\dx \\
	&\quad=: I + II + III + IV + V + VI. \numberthis[eq:tr:psiL:tested]
	\end{align*}
    
	First, we obtain from \eqref{reg:QL_leq}, \eqref{eq:mass_consv_psiLn}, \eqref{init:regularity_psi_L_0}, and \eqref{eq:nabla_potLk_bound} that
	\begin{align}
	\label{eq:tr:psiL:IV_V}
	\begin{split}
	IV + V
	&\leq C(\mollEps, \DE^{-1}) \Big( \norm{\nablax \potL{n}}_{\LpOS{\infty}} \\
	&\qquad\qquad + \norm{\nablag \potL{n}}_{\LpOS{\infty}} \Big) \norm{\psiL{n}}_{\LpOS{1}} \norm{\theta}_{\W^{1, \infty}(\OmSph)} \\
	&\leq C(\mollEps, \DE^{-1}, \Omega, d) \norm{\psi_0}_{\LpOS{1}}^2 \norm{\theta}_{\HkOS{4}}.
	\end{split}
	\end{align}
    The estimates of the other terms in \eqref{eq:tr:psiL:tested} can be found in \cite[p.25]{chen2017activeLC}.
	Thus, we conclude that
	\begin{align}
	\label{eq:tr:psiL:combined}
	\begin{split}
	&\abs{ \intOmSph \frac{\psiL{n} - \psiL{n-1}}{\tau} \theta \dH\dx } \\
	&\quad\leq C \Big( 1 + \norm{ \nablax \sqrt{\psiL{n}} }_{\LpOS{2}} \\
	&\qquad\qquad + \norm{ \nablag \sqrt{\psiL{n}} }_{\LpOS{2}} + \norm{\nablax \uL{n}}_{\L^2(\Omega)} \Big) \norm{\theta}_{\HkOS{4}},
	\end{split}
	\end{align}
	where $C = C\lrcb{ \Omega, \RE, \RE^{-1}, \DE, \DE^{-1}, \viscFrac, \activity, T, \CmollEps, \mollEps, \potStr, d, \uflow_0, \psi_0 } > 0$.
    Now, to complete the proof, we multiply \eqref{eq:tr:psiL:combined} by $\tau$, sum up from $n = 1 \to N$, and apply \eqref{est:discrete_entropy}.
\end{proof}

\section{Passage to the limit: Existence of weak solutions}
\label{sec:passage_limit}
In this section, our goal is to pass to the limit in \eqref{def:PLtau} as $\regL \nearrow \infty$ and $\tau \searrow 0$. 
Before doing so however we reformulate \eqref{def:PLtau} continuously in time. 
For this task, let us define the following continuous-in-time functions:
\begin{equation}
\label{def:time_cont_fcts}
\begin{alignedat}{2}
\uL{\taup}(\cdot, t) &:= \uL{n}(\cdot), \qquad \uL{\taum}(\cdot, t) := \uL{n-1}(\cdot),\quad && t \in (t_{n-1}, t_n] , \quad n \in \{1, \dots, N\}, \\
\uL{\taum}(\cdot, 0) &:= \uL{0}(\cdot) \in \Hdiv.
\end{alignedat}
\end{equation}

Moreover, we define similarly to Chen and Liu (cf. \cite[p. 26]{chen_liu_2013_entropy_solution}) the difference quotient with respect to time by
\begin{align}
\label{def:time_deriv}
\dtau \uL{\taup}(\cdot, t) := \frac{\uL{n}(\cdot) - \uL{n-1}(\cdot)}{\tau}, \quad t \in (t_{n-1}, t_n], \quad n \in \{1, \dots, N\}.
\end{align}
We introduce the same notation for $\psiL{\tau, \pm}$ and $\dtau \psiL{\taup}$
with $\psiL{\taum}(\cdot, \tilde{\cdot}, 0) = \psiL{0}(\cdot, \tilde{\cdot})$.
From now on, let
\begin{align}
\label{tc:cond:tau}
\tau \in o(\regL^{-1}) \text{ satisfy the condition in Lemma } \ref{lmm:regularity_omegaL}.
\end{align}

On noting the time continuous notation \eqref{def:time_cont_fcts}, we can rewrite the weak formulation \eqref{def:PLtau} after a discrete integration with respect to time as follows:\\
Find $\uL{\tau, \pm}(\cdot, t) \in \Hdiv$, $t \in (0,T]$, such that
\begin{align*}
&\RE \DE \intT\intOm \dtau \uL{\taup} \cdot \wflow \dx\dt + \RE \DE \intT\intOm \lrcb{ (\uL{\taum} \cdot \nablax) \uL{\taup} } \cdot \wflow \dx\dt \\
&\quad\quad + \viscFrac \DE \intT\intOm \nablax \uL{\taup} : \nablax \wflow \dx\dt \\
&\quad\quad + \frac{(1-\viscFrac) \DE}{2} \intT\intOmSph \QL(\psiL{\taup}) \lrcb{ \nablax \uL{\taup} : \mom } \cdot \\
&\qquad\qquad\qquad\qquad\qquad\qquad\qquad\qquad \cdot \lrcb{\nablax \wflow : \mom} \dHm\dx\dt \\
&\quad= - \frac{(1-\viscFrac)}{2} \intT\intOmSph \psiL{\taup} \nablax \pot[\psiL{\taup} + \psiL{\taum}] \cdot \wflow \dH\dx\dt \numberthis[tc:eq:u] \\
&\quad\quad - \frac{1-\viscFrac}{2} \intT\intOmSph \QL(\psiL{\taup}) \lrcb{ (\IdMat - \mom) \nablax \wflow \mflow } \cdot \\
&\qquad\qquad\qquad\qquad\qquad\qquad\qquad\qquad \cdot \nablag \pot[\psiL{\taup} + \psiL{\taum}] \dHm\dx\dt \\
&\quad\quad - (1-\viscFrac) \intT\intOmSph \psiL{\taup} \lrcb{d\mom - \IdMat} : \nablax \wflow \dHm\dx\dt \\
&\quad\quad - \activity (1 - \viscFrac) \intT\intOmSph \QL(\psiL{\taup}) \lrcb{ \mom : \nablax \wflow } \dHm\dx\dt
\end{align*}
for all $\wflow \in \L^1(0,T; \Hdiv)$.
Moreover, find $\psiL{\tau, \pm}(\cdot, \tilde{\cdot}, t) \in \HkOS{1} \cap \Zp{2}$, $t \in (0,T]$, such that
\begin{align*}
&\intT\intOmSph \dtau \psiL{\taup} \theta \dH\dx\dt - \intT\intOmSph \psiL{\taup} \uL{\taup} \cdot \nablax \theta \dH\dx\dt \\
&\qquad + \frac{1}{\DE} \intT\intOmSph \left[ \mollEps^2 \nablax \psiL{\taup} \cdot \nablax \theta + \nablag \psiL{\taup} \cdot \nablag \theta \right] \dH\dx\dt \\
&\qquad + \frac{\mollEps^2}{2 \DE} \intT\intOmSph \QL(\psiL{\taup}) \nablax \pot[\psiL{\taup} + \psiL{\taum}] \cdot \nablax \theta \dH\dx\dt \numberthis[tc:eq:psi] \\
&\qquad + \frac{1}{2 \DE} \intT\intOmSph \QL(\psiL{\taup}) \nablag \pot[\psiL{\taup} + \psiL{\taum}] \cdot \nablag \theta \dH\dx\dt \\
&\qquad - \intT\intOmSph \QL(\psiL{\taup}) \lrcb{ (\IdMat - \mom) \nablax \uL{\taup} \mflow } \cdot \nablag \theta \dHm\dx\dt = 0
\end{align*}
for all $\theta \in \L^1(0,T; \HkOS{1})$.
We note that the problem \eqref{tc:eq:u}-\eqref{tc:eq:psi} is equivalent to \eqref{eq:PLtau:u}-\eqref{eq:PLtau:psi}, to which existence of solutions has been proved (cf. Lemma \ref{lmm:existence_fp}).
Furthermore, let us define for future purposes
\begin{subequations}
	\begin{align}
	\omegaL{\tau, \pm}(\xflow, t) &:= \intSph \psiL{\tau, \pm}(\xflow, \mflow, t) \dHm \in \R_{\geq 0}, \label{tc:def:omegaL} \\
	\StressL{\tau, \pm}(\xflow, t) &:= \intSph \psiL{\tau, \pm}(\xflow, \mflow, t) \mom \dHm \in \R^{d \times d} \label{tc:def:StressL}
	\end{align}
\end{subequations}
for a.e. $(\xflow, t) \in \Omega \times (0, T]$. Thus, we can rewrite the interaction potential as follows:
\begin{align}
\label{tc:pot_repres}
\pot[\psiL{\tau, \pm}](\xflow, \mflow, t) = \potStr \Je{\omegaL{\tau, \pm}(\cdot, t)}(\xflow) - \potStr \JeMat{\StressL{\tau, \pm}(\cdot, t)}(\xflow) : \mom
\end{align}
for $(\xflow, \mflow, t) \in \OmSph \times (0, T]$.
Moreover, we obtain from \eqref{init:regularity_u_L_0}, $\tau \in o(\regL^{-1})$, \eqref{def:time_cont_fcts}, \eqref{est:discrete_entropy}, and \eqref{eq:regularity_omegaL} that
\begin{align*}
&\esssup_{t \in [0,T]} \intOm \abs{\uL{\tau, \pm}(\xflow, t)}^2 \dx  +  \frac{1}{\tau} \intT\intOm \abs{ \uL{\taup} - \uL{\taum} }^2 \dx\dt  \\
&\quad\quad+  \intT\intOm \abs{\nablax \uL{\tau, \pm}}^2 \dx\dt \\
&\quad\quad + \esssup_{t \in [0,T]} \intOmSph \entrop(\psiL{\taup}(\xflow, \mflow, t)) \dHm\dx \\
&\quad\quad + \frac{1}{\tau \regL} \intT\intOmSph \abs{ \psiL{\taup} - \psiL{\taum} }^2 \dH\dx\dt \numberthis[tc:regularity_past] \\
&\quad\quad + \intT\intOmSph \Big[ | \nablax \sqrt{\psiL{\taup}} |^2 + | \nablag \sqrt{\psiL{\taup}} |^2 \Big] \dH\dx\dt \\
&\quad\quad + \esssup_{t \in [0,T]} \intOm \abs{\omegaL{\taup}(\xflow, t)}^2 \dx + \intT\intOm \abs{\nablax \omegaL{\taup}}^2 \dx\dt \\
&\quad\quad + \frac{1}{\tau} \intT\intOm \abs{ \omegaL{\taup} - \omegaL{\taum} }^2 \dx\dt \\
&\quad\leq C_\star < \infty,
\end{align*}
where $C_\star = C_\star \lrcb{ \RE, \RE^{-1}, \DE, \DE^{-1}, \viscFrac, \viscFrac^{-1}, \activity, T, \CmollEps, \mollEps, \mollEps^{-1}, \potStr, d, \Omega, \uflow_0, \psi_0 } > 0$ is independent of $\tau$ and $\regL$.

\begin{thrm}[Main Theorem: Existence of weak solutions]
	\label{thrm:main}
	Suppose that the assumptions \eqref{assumptions:domain_init_data} and the condition \eqref{tc:cond:tau}, which relates $\tau$ to $\regL$, hold.
	Then, there exists a subsequence of $\{ (\uL{\tau, \pm}, \psiL{\tau, \pm}) \}_{\regL > 1}$ (not relabelled) and a pair of functions $(\uflow, \psi)$ such that
	\begin{subequations}
		\begin{gather}
		\uflow \in \L^\infty(0, T; \Lflow^2(\Omega)) \cap \L^2(0, T; \Hdiv), \label{tc:limit:u} \\
		\partial_t \uflow \in \L^2\lrcb{ 0,T; ( \Vk{2} )^\prime }, \label{tc:limit:dt_u}
		\end{gather}
	\end{subequations}
	and
	\begin{subequations}
		\begin{gather}
		\psi \in \L^\infty(0,T; \L^2(\Omega; \L^1(\Sph))), \quad \sqrt{\psi} \in \L^2(0,T; \HkOS{1}), \label{tc:limit:psi} \\
		\partial_t \psi \in \L^2( 0,T; (\HkOS{4})^\prime ), \label{tc:limit:dt_psi}
		\end{gather}
	\end{subequations}
	with $\psi \geq 0$ a.e. in $\OmSph \times [0,T]$ and
	\begin{align}
	\label{tc:limit:finite_entropy}
	\entrop(\psi) \in \L^\infty(0, T; \LpOS{1}).
	\end{align}
	Moreover, it holds that, as $\regL \nearrow \infty$ (and thereby $\tau \searrow 0$),
	\begin{subequations}
		\begin{alignat}{3}
		\uL{\tau, \pm} &\weakstar \uflow \quad &&\text{ weak}^* &&\text{ in } \L^\infty(0, T; \Lflow^2(\Omega)) \label{tc:limit:u:weakstar} \\
		\uL{\tau, \pm} &\weak \uflow \quad &&\text{ weakly} &&\text{ in } \L^2(0,T; \Hdiv), \label{tc:limit:u:weak} \\
		\uL{\tau, \pm} &\to \uflow \quad &&\text{ strongly} &&\text{ in } \L^2(0,T; \Lflow^r(\Omega)), \label{tc:limit:u:strong}\\
		\dtau \uL{\taup} &\weak \partial_t \uflow \quad &&\text{ weakly} &&\text{ in } \L^2( 0,T; (\Vk{2})^\prime ), \label{tc:limit:u:dt}
		\end{alignat}
	\end{subequations}
	where $r \in [1, \infty)$ if $d = 2$ and $r \in [1, 6)$ if $d = 3$, and
	\begin{subequations}
		\begin{alignat}{3}
		\nablax \sqrt{\psiL{\taup}} &\weak \nablax \sqrt{\psi} \quad &&\text{ weakly} &&\text{ in } \L^2(0,T; \Lflow^2(\OmSph)), \label{tc:limit:psi:nablax_sqrt} \\
		\nablag \sqrt{\psiL{\taup}} &\weak \nablag \sqrt{\psi} \quad &&\text{ weakly} &&\text{ in } \L^2(0,T; \Lflow^2(\OmSph)), \label{tc:limit:psi:nablag_sqrt} \\
		\sqrt{\psiL{\taup}} &\to \sqrt{\psi} \quad&&\text{ strongly} &&\text{ in } \L^4(0,T; \L^4(\Omega; \L^2(\Sph))), \label{tc:limit:psi:sqrt_strong} \\
		\psiL{\tau, \pm} &\to \psi \quad&&\text{ strongly} &&\text{ in } \L^2(0,T; \L^2(\Omega; \L^1(\Sph))), \label{tc:limit:psi:strong} \\
		\QL(\psiL{\taup}) &\to \psi \quad&&\text{ strongly} &&\text{ in } \L^2(0,T; \L^2(\Omega; \L^1(\Sph))), \label{tc:limit:psi:QL_strong} \\
		\dtau \psiL{\taup} &\weak \partial_t \psi \quad&&\text{ weakly} &&\text{ in } \L^2( 0,T; (\HkOS{4})^\prime ). \label{tc:limit:psi:dt}
		\end{alignat}
	\end{subequations}
	Furthermore, we introduce
	\begin{subequations}
		\label{tc:def:omega_Stress}
		\begin{align}
		\omega(\xflow, t) &:= \intSph \psi(\xflow, \mflow, t) \dHm \in \R_{\geq 0} \ \text{ and } \label{tc:def:omega}\\
		\Stress(\xflow, t) &:= \intSph \psi(\xflow, \mflow, t) \mom \dHm \in \R^{d \times d} \label{tc:def:Stress}
		\end{align}
	\end{subequations}
	for a.e. $(\xflow, t) \in \Omega \times (0,T]$. It holds that
	\begin{subequations}
		\begin{align}
		\omega &\in \L^\infty(0, T; \L^2(\Omega)) \cap \L^2(0, T; \H^1(\Omega)), \label{tc:limit:omega} \\
		\Stress &\in \L^\infty(0, T; \LMat^2(\Omega)), \label{tc:limit:Stress}
		\end{align}
	\end{subequations}
	and that
	\begin{subequations}
		\begin{alignat}{3}
		\omegaL{\tau, \pm} &\to \omega \quad&&\text{ strongly} &&\text{ in } \L^2(0,T; \L^2(\Omega)), \label{tc:limit:omega:strong} \\
		\StressL{\tau, \pm} &\to \Stress \quad&&\text{ strongly} &&\text{ in } \L^2(0,T; \LMat^2(\Omega)), \label{tc:limit:Stress:strong} \\
		\pot[\psiL{\tau, \pm}] &\to \pot[\psi] \quad&&\text{ strongly} &&\text{ in } \L^2(0, T; \W^{1, \infty}(\OmSph)). \label{tc:limit:pot:strong}
		\end{alignat}
	\end{subequations}
	
	Moreover, the tuple $(\uflow, \psi)$ is a global weak solution of $(\mathcal{P})$, in the sense that
	\begin{align}
	\label{P:weak:u}
	\begin{split}
	&-\RE\DE \intT\intOm \uflow \cdot \partial_t \wflow \dx\dt  +  \RE\DE \intT\intOm \lrcb{ (\uflow \cdot \nablax) \uflow } \cdot \wflow \dx\dt \\
	&\qquad + \viscFrac \DE \intT\intOm \nablax \uflow : \nablax \wflow \dx\dt \\
	&\qquad + \frac{(1-\viscFrac) \DE}{2} \intT\intOmSph \psi (\nablax \uflow : \mom) (\nablax \wflow : \mom) \dHm\dx\dt \\
	&\quad = \RE\DE \intOm \uflow_0(\xflow) \cdot \wflow(\xflow, 0) \dx  -  (1 - \viscFrac) \intT\intOmSph \psi \nablax \pot[\psi] \cdot \wflow \dH\dx\dt \\
	&\quad\qquad - (1 - \viscFrac) \intT\intOmSph \psi \lrcb{ (\IdMat - \mom) \nablax \wflow \mflow } \cdot \nablag \pot[\psi] \dHm\dx\dt \\
	&\quad\qquad - (1 - \viscFrac) \intT\intOmSph \psi (d \mom - \IdMat) : \nablax \wflow \dHm\dx\dt \\
	&\quad\qquad - \alpha (1 - \viscFrac) \intT\intOmSph \psi (\mom : \nablax \wflow) \dHm\dx\dt \\
	&\qquad\qquad\qquad \forall \wflow \in \W^{1,1}(0,T; \Vk{2}) \text{ such that } \wflow(\cdot, T) = 0,
	\end{split}
	\end{align}
	and
	\begin{align*}
	&-\intT\intOmSph \psi \partial_t \theta \dH\dx\dt  -  \intT\intOmSph \psi \uflow \cdot \nablax \theta \dH\dx\dt \\
	&\qquad + \frac{1}{\DE} \intT\intOmSph \left[ \mollEps^2 \nablax \psi \cdot \nablax \theta + \nablag \psi \cdot \nablag \theta \right] \dH\dx\dt \\
	&\qquad + \frac{1}{\DE} \intT\intOmSph \psi \left[ \mollEps^2 \nablax \pot[\psi] \cdot \nablax \theta + \nablag \pot[\psi] \cdot \nablag \theta \right] \dH\dx\dt \\
	&\qquad - \intT\intOmSph \psi \lrcb{ (\IdMat - \mom) \nablax \uflow \mflow } \cdot \nablag \theta \dHm\dx\dt \\
	&\quad = \intOmSph \psi_0(\xflow, \mflow) \theta(\xflow, \mflow, 0) \dHm\dx \numberthis[P:weak:psi] \\
	&\qquad\qquad\qquad \forall \theta \in \W^{1,1}(0,T; \HkOS{4}) \text{ such that } \theta(\cdot, \tilde{\cdot}, T) = 0.
	\end{align*}
	
	In particular, the following energy inequality is satisfied for a.e. $s \in [0,T]$:
	\begin{align*}
	&\frac{\RE\DE}{2} \intOm \abs{ \uflow(\xflow, s) }^2 \dx  +  \frac{\viscFrac\DE}{2} \int_0^s \abs{ \nablax \uflow }^2 \dx\dt \\
	&\qquad + (1 - \viscFrac) \intOmSph \entrop(\psi(\xflow, \mflow, s)) \dHm\dx \\
	&\qquad +  \frac{1-\viscFrac}{2} \intOmSph \psi(\xflow, \mflow, s) \pot[\psi(\xflow, \mflow, s)] \dHm\dx \\
	&\qquad + \frac{1 - \viscFrac}{\DE} \int_0^s \intOmSph \left[ \mollEps^2 \abs{ \nablax \sqrt{\psi} }^2 + \abs{\nablag \sqrt{\psi}}^2 \right] \dH\dx\dt \\
	&\quad \leq \frac{\RE\DE}{2} \intOm \abs{\uflow_0}^2 \dx + (1 - \viscFrac) \intOmSph \entrop(\psi_0) \dH\dx \numberthis[eq:main:energy_ineq] \\
	&\quad\quad + \frac{1-\viscFrac}{2} \intOmSph \psi_0 \pot[\psi_0] \dH\dx + C_{\star\star},
	\end{align*}
	where $C_{\star\star} > 0$ depends solely on $\psi_0, \alpha, \RE, \DE, \DE^{-1}, \viscFrac, \CmollEps, \potStr, \Omega, d,$ and  $T$.

\end{thrm}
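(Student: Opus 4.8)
The plan is to pass to the limit, as $\regL \nearrow \infty$ (and hence $\tau \searrow 0$), in the time-continuous reformulation \eqref{tc:eq:u}--\eqref{tc:eq:psi} of the discrete scheme $\PLtau$, whose solvability is guaranteed by Corollary \ref{crllr:nonnegativity}. From the uniform bounds \eqref{tc:regularity_past}, \eqref{eq:time_reg:uL} and \eqref{eq:time_reg:psiL} I would first extract a (not relabelled) subsequence realising the weak and weak-$*$ convergences \eqref{tc:limit:u:weakstar}, \eqref{tc:limit:u:weak}, \eqref{tc:limit:u:dt}, \eqref{tc:limit:psi:nablax_sqrt}, \eqref{tc:limit:psi:nablag_sqrt} and \eqref{tc:limit:psi:dt}, together with weak/weak-$*$ convergence of $\omegaL{\tau,\pm}$ and $\StressL{\tau,\pm}$; the weak limits of the discrete time derivatives are identified with $\partial_t\uflow$, $\partial_t\psi$ once strong convergence is available, and the regularity properties \eqref{tc:limit:u}--\eqref{tc:limit:finite_entropy}, \eqref{tc:limit:omega} and \eqref{tc:limit:Stress} of the limit $(\uflow,\psi)$ follow from weak lower semicontinuity of norms and, for \eqref{tc:limit:finite_entropy}, convexity of $\entrop$.

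The decisive preliminary step is to upgrade these to the strong convergences \eqref{tc:limit:u:strong}, \eqref{tc:limit:psi:sqrt_strong}, \eqref{tc:limit:psi:strong}, \eqref{tc:limit:psi:QL_strong}, \eqref{tc:limit:omega:strong}, \eqref{tc:limit:Stress:strong} and \eqref{tc:limit:pot:strong}. Since $\uL{\tau,\pm}$, $\omegaL{\tau,\pm}$, $\psiL{\tau,\pm}$ are piecewise constant in time, I would invoke the compactness theorem of Dreher and J\"ungel \cite{dreher2012compact} instead of Aubin--Lions: for $\uL{\taup}$ with the $\L^2(0,T;\Hdiv)$-bound, the $\L^2(0,T;(\Vk{2})^\prime)$-bound on $\dtau\uL{\taup}$ from Lemma \ref{lmm:time_reg:uL}, and the increment estimate $\intT\intOm\abs{\uL{\taup}-\uL{\taum}}^2\dx\dt\le\tau\,C_\star$; for $\omegaL{\taup}$ with \eqref{eq:regularity_omegaL}, the analogous increment estimate, and a $\L^2(0,T;(\H^4(\Omega))^\prime)$-bound on $\dtau\omegaL{\taup}$ obtained by testing \eqref{eq:PLtau:psi} with $\xflow$-dependent functions; and for $\psiL{\taup}$ by noting that $\nablax\psiL{\taup}=2\sqrt{\psiL{\taup}}\nablax\sqrt{\psiL{\taup}}$ and $\nablag\psiL{\taup}=2\sqrt{\psiL{\taup}}\nablag\sqrt{\psiL{\taup}}$ are bounded in $\L^2(0,T;\L^1(\OmSph))$ by Cauchy--Schwarz, so that $\psiL{\taup}$ is bounded in $\L^2(0,T;\W^{1,1}(\OmSph))$, which with Lemma \ref{lmm:time_reg:psiL} and $\intT\intOmSph\abs{\psiL{\taup}-\psiL{\taum}}^2\dH\dx\dt\le\tau\regL\,C_\star$ (recall $\tau\regL\to0$) gives relative compactness in $\L^2(0,T;\L^1(\OmSph))$. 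Passing to a.e.\ convergence of $\psiL{\taup}$ and $\omegaL{\taup}$, and using $\norm{\sqrt{\psiL{\taup}}(\xflow,\cdot,t)}_{\L^2(\Sph)}^2=\omegaL{\taup}(\xflow,t)\to\omega(\xflow,t)$ a.e., a Brezis--Lieb/Radon--Riesz argument on $\L^2(\Sph)$ gives $\sqrt{\psiL{\taup}}\to\sqrt{\psi}$ in $\L^2(\Sph)$ for a.e.\ $(\xflow,t)$; Vitali's theorem---with $\{(\omegaL{\tau,\pm})^2\}$ uniformly integrable on $\Omega\times(0,T)$ because $\omegaL{\tau,\pm}\to\omega$ in $\L^2(\Omega\times(0,T))$---then yields \eqref{tc:limit:psi:sqrt_strong}, and \eqref{tc:limit:psi:strong} follows from the pointwise bound $\norm{\psiL{\taup}-\psi}_{\L^1(\Sph)}\le\norm{\sqrt{\psiL{\taup}}-\sqrt{\psi}}_{\L^2(\Sph)}\lrcb{\sqrt{\omegaL{\taup}}+\sqrt{\omega}}$; the nonnegativity $\psi\ge0$ a.e.\ is inherited from $\psiL{\taup}\in\Zp{2}$. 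Finally, \eqref{tc:limit:psi:QL_strong} follows from $0\le\QL(\psiL{\taup})\le\psiL{\taup}$, the a.e.\ identity $\QL(\psiL{\taup})\to\psi$ (since $\psi<\infty$ a.e.\ and $\regL\nearrow\infty$) and the generalised dominated convergence theorem; \eqref{tc:limit:Stress:strong} from $\abs{\StressL{\tau,\pm}-\Stress}\le\norm{\psiL{\tau,\pm}-\psi}_{\L^1(\Sph)}$; and \eqref{tc:limit:pot:strong} from the representation \eqref{tc:pot_repres}, \eqref{mollifier:W_2_infty} and the strong $\L^2(\Omega\times(0,T))$-convergence of $\omegaL{\tau,\pm}$ and $\StressL{\tau,\pm}$.

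With these convergences in hand I would pass to the limit term by term in \eqref{tc:eq:u} and \eqref{tc:eq:psi}. The linear terms are immediate; the convective term, rewritten as $-\RE\DE\intT\intOm(\uL{\taum}\otimes\uL{\taup}):\nablax\wflow\dx\dt$ using $\Divx\uL{\taum}=0$, passes by the strong $\L^2(0,T;\Lflow^r(\Omega))$-convergence of $\uL{\tau,\pm}$ to the common limit $\uflow$ (common because of the increment bound); and the body-force, active and elastic-stress terms of \eqref{tc:eq:u}, as well as the potential and active terms of \eqref{tc:eq:psi}, are products of $\QL(\psiL{\taup})$ or $\psiL{\taup}$ (strongly convergent in $\L^2(0,T;\L^2(\Omega;\L^1(\Sph)))$) with the $\W^{1,\infty}$-strongly convergent $\pot[\psiL{\taup}+\psiL{\taum}]$ and with bounded test-function factors, so they pass without difficulty. \textbf{The main obstacle is identifying the limits of the two genuinely coupled quadratic terms} $\QZL(\psiL{\taup})(\nablax\uL{\taup}:\mom)(\nablax\wflow:\mom)$ in \eqref{tc:eq:u} and $\QZL(\psiL{\taup})\lrcb{(\IdMat-\mom)\nablax\uL{\taup}\mflow}\cdot\nablag\theta$ in \eqref{tc:eq:psi}, in which $\psi$ enjoys only $\L^2(\Omega;\L^1(\Sph))$-integrability whereas $\nablax\uL{\taup}$ converges merely weakly in $\L^2$. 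I would handle these by the factorisation $\QZL(\psiL{\taup})=\sqrt{\QZL(\psiL{\taup})}\cdot\sqrt{\QZL(\psiL{\taup})}$: the entropy estimate \eqref{est:discrete_entropy} together with \eqref{eq:regularity_omegaL} shows that $\sqrt{\QZL(\psiL{\taup})}(\nablax\uL{\taup}:\mom)$ and $\sqrt{\QZL(\psiL{\taup})}\lrcb{(\IdMat-\mom)\nablax\uL{\taup}\mflow}$ are bounded in suitable $\L^2$-in-time spaces, hence converge weakly, with limits identified---by testing against smooth functions and combining the a.e.\ convergence of $\sqrt{\QZL(\psiL{\taup})}$ with the weak convergence of $\nablax\uL{\taup}$---as $\sqrt{\psi}(\nablax\uflow:\mom)$ and $\sqrt{\psi}\lrcb{(\IdMat-\mom)\nablax\uflow\mflow}$; the complementary factors $\sqrt{\QZL(\psiL{\taup})}(\nablax\wflow:\mom)$ and $\sqrt{\QZL(\psiL{\taup})}\nablag\theta$ converge strongly in the dual topology thanks to the a.e.\ convergence of $\sqrt{\QZL(\psiL{\taup})}$, the $\L^\infty(0,T;\L^4(\Omega;\L^2(\Sph)))$-bound implied by \eqref{eq:regularity_omegaL}, Vitali's theorem, and the boundedness of $\nablax\wflow$, $\nablag\theta$ (using $\W^{1,1}(0,T;\Vk{2})\hookrightarrow\C([0,T];\Vk{2})$ and $\W^{1,1}(0,T;\HkOS{4})\hookrightarrow\C([0,T];\HkOS{4})$ together with the Sobolev embeddings $\Vk{2}\hookrightarrow\W^{1,4}(\Omega)$, $\HkOS{4}\hookrightarrow\W^{1,\infty}(\OmSph)$ valid for $d\in\{2,3\}$), so the pairings converge. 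The initial data $\uflow_0$, $\psi_0$ are then recovered by a discrete summation by parts in time applied to the $\dtau$-terms against test functions vanishing at $t=T$, using $\uL{0}\to\uflow_0$ in $\Lflow^2(\Omega)$ (from \eqref{init:def:u_L_0}) and $\psiL{0}=\QL(\psi_0)\to\psi_0$ (from \eqref{init:def:psi_L_0}, \eqref{reg:QL_leq} and dominated convergence), which yields \eqref{P:weak:u} and \eqref{P:weak:psi}.

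The energy inequality \eqref{eq:main:energy_ineq} is finally obtained by taking $\liminf_{\regL\nearrow\infty}$ in the discrete entropy estimate \eqref{est:discrete_entropy}, read off at a generic time $s\in(0,T]$ for the time-continuous functions, after discarding on its left-hand side the nonnegative time-increment sums and the $\QL(\psiL{k})(\nablax\uL{k}:\mom)^2$-term (absent from \eqref{eq:main:energy_ineq}). The kinetic term $\norm{\uflow(\cdot,s)}_{\L^2(\Omega)}^2$, the viscous dissipation $\int_0^s\norm{\nablax\uflow}_{\L^2(\Omega)}^2\dt$ and $\int_0^s\intOmSph[\mollEps^2\abs{\nablax\sqrt{\psi}}^2+\abs{\nablag\sqrt{\psi}}^2]\dH\dx\dt$ pass by weak lower semicontinuity (using $t_n\to s$ and monotonicity of the time integrals), the entropy term $\intOmSph\entrop(\psi(\xflow,\mflow,s))\dHm\dx$ by convexity of $\entrop$ and Fatou's lemma with the a.e.\ convergence along a subsequence, and the interaction-energy term $\intOmSph\psi(\xflow,\mflow,s)\pot[\psi(\xflow,\mflow,s)]\dHm\dx$ by the strong convergences of $\psiL{\taup}(\cdot,\cdot,s)$ and $\pot[\psiL{\taup}(\cdot,\cdot,s)]$ for a.e.\ $s$; the right-hand side of \eqref{est:discrete_entropy} is already a constant, so one sets $C_{\star\star}:=\frac{\activity^2(1-\viscFrac)T}{\DE}\norm{\psi_0}_{\LpOS{1}}+C_{*}$. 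This completes the proof.
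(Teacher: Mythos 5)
Your overall strategy coincides with the paper's: uniform bounds from \eqref{tc:regularity_past}, \eqref{eq:time_reg:uL}, \eqref{eq:time_reg:psiL}, strong compactness via Dreher--J\"ungel (the paper does this explicitly for $\uflow$ and $\omega$-type quantities and cites Chen--Liu, pp.~27--30, for the $\psi$-convergences you spell out with the $\W^{1,1}(\OmSph)$-bound, Radon--Riesz and Vitali), term-by-term passage to the limit in \eqref{tc:eq:u}--\eqref{tc:eq:psi}, and the energy inequality by lower semicontinuity of \eqref{est:discrete_entropy}. Most of what you add are exactly the details the paper outsources to \cite{chen_liu_2013_entropy_solution}, and they are sound.

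There is, however, one concrete step that does not hold as stated. For the drag term $\QZL(\psiL{\taup})\lrcb{(\IdMat-\mom)\nablax\uL{\taup}\mflow}\cdot\nablag\theta$ you claim that the entropy estimate \eqref{est:discrete_entropy} together with \eqref{eq:regularity_omegaL} bounds $\sqrt{\QZL(\psiL{\taup})}\lrcb{(\IdMat-\mom)\nablax\uL{\taup}\mflow}$ in $\L^2$. The entropy estimate only controls the contracted quantity $\QL(\psiL{\taup})\lrcb{\nablax\uL{\taup}:\mom}^2$, not the $\QL$-weighted full tangential part; a genuine $\L^2$-bound of $\sqrt{\QZL(\psiL{\taup})}\,\nablax\uL{\taup}$ would require $\intOm \omegaL{\taup}\abs{\nablax\uL{\taup}}^2\dx$ to be time-integrable, i.e. $\nablax\uL{\taup}\in\L^2(0,T;\Lflow^4(\Omega))$ or $\omegaL{\taup}\in\L^\infty$, neither of which is available from \eqref{est:discrete_entropy} or \eqref{eq:regularity_omegaL}. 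So the weak--strong factorisation you propose breaks down for this particular term. The fix is that no factorisation is needed there: since $\theta\in\HkOS{4}\hookrightarrow\W^{1,\infty}(\OmSph)$, one integrates over the sphere first, setting $\AMat_\regL(\xflow,t):=\intSph \QZL(\psiL{\taup})\,\lrcb{(\IdMat-\mom)\nablag\theta}\otimes\mflow\dHm$, which converges strongly in $\LMat^2(\Omega\times(0,T))$ by \eqref{tc:limit:psi:QL_strong} and the boundedness of $\nablag\theta$, and then pairs with the weakly convergent $\nablax\uL{\taup}$. Your square-root splitting is genuinely needed (and correctly justified) only for the viscous-polymer term $\QZL(\psiL{\taup})(\nablax\uL{\taup}:\mom)(\nablax\wflow:\mom)$, where the entropy estimate does furnish the $\L^2$-bound on $\sqrt{\QZL(\psiL{\taup})}(\nablax\uL{\taup}:\mom)$ and $\nablax\wflow$ is only in $\Lflow^4/\Lflow^6$ rather than $\Lflow^\infty$. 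With that local repair the rest of your argument, including the recovery of the initial data and the energy inequality, goes through as in the paper.
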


\begin{proof}
    For ease of notation, we note that all generic constants $0 < C < \infty$ are independent of $\regL$ and $\tau$ in this proof.\\
	\textit{Convergence properties of $\uflow$.}
	First, we prove the convergence properties of the velocity field. We infer from the first bound in \eqref{tc:regularity_past} that there exist subsequences $\{ \uL{\tau, \pm} \}_{\regL > 1}$ (not relabelled) and functions $\uflow^{\pm} \in \L^\infty(0,T; \Lflow^2(\Omega))$ such that
	\begin{align}
	\label{tc:limit:proof:u:weakstar}
	\uL{\tau, \pm} \weakstar \uflow^{\pm} \quad \text{ weak}^* \text{ in } \L^\infty(0,T; \Lflow^2(\Omega)).
	\end{align}
	Moreover, on noting \eqref{tc:limit:proof:u:weakstar} and $\tau \in o(\regL^{-1})$, the second bound in \eqref{tc:regularity_past} yields
	\begin{align*}
	\intT\intOm \lrcb{ \uflow^+ - \uflow^- } \cdot \wflow \dx\dt
	\gets \intT\intOm \lrcb{ \uL{\taup} - \uL{\taum} } \cdot \wflow \dx\dt 
	\to 0
	\end{align*}
	for all $\wflow \in \L^2(0,T; \Lflow^2(\Omega)) \subset \L^1(0,T; \Lflow^2(\Omega))$. Hence, we have that $\uflow^+ = \uflow^-$ and we denote this limit by $\uflow := \uflow^+ = \uflow^- \in \L^\infty(0,T; \Lflow^2(\Omega))$, which yields \eqref{tc:limit:u:weakstar}.
	Furthermore, we obtain from the uniqueness of limits of sequences in the weak topology of $\L^2(0,T; \Hdiv)$, the third bound in \eqref{tc:regularity_past}, and a further extraction of a subsequence that $\uflow \in \L^2(0,T; \Hdiv)$ and that \eqref{tc:limit:u} and \eqref{tc:limit:u:weak} hold.
	On noting \eqref{eq:time_reg:uL}, we have that
	\begin{align}
	\label{tc:limit:proof:u:timereg}
	\begin{split}
	\int_0^{T - \tau} \norm{ \uL{\taup}(\cdot, t + \tau) - \uL{\taup}(\cdot, t) }_{( \Vk{2} )^\prime}^2 dt = \sum_{k=1}^{N-1} \tau \norm{ \uL{k} - \uL{k-1} }_{( \Vk{2} )^\prime}^2
	\leq C \tau^2.
	\end{split}
	\end{align}
	On noting \eqref{tc:limit:proof:u:timereg}, $\norm{\uL{\taup}}_{\L^2(0,T; \Hdiv)} \leq C$, the compact embedding 
	\begin{align*}
	\Hdiv \hookrightarrow\hookrightarrow \Lflow^r(\Omega)\cap\Ldivp{2}	
	\end{align*}
	with $r \in [2, \infty)$ if $d=2$ and $r \in [2, 6)$ if $d=3$, and the continuous embedding
	\begin{align}
	\label{tc:limit:proof:cont_emb:u}
	\Lflow^r(\Omega) \cap \Ldivp{2} \hookrightarrow (\Vk{2})^\prime,
	\end{align}
	it follows by Theorem 1 of Dreher and J\"ungel \cite{dreher2012compact} (for a further subsequence) that $\uL{\taup} \to \uflow$ strongly in $\L^2(0,T; \Lflow^r(\Omega))$.
	As $\{ \uL{\taup} \}_{\regL > 1}$ is uniformly bounded in $\L^2(0,T;$ $\Hdiv)$, we deduce from the second bound in \eqref{tc:regularity_past} and the continuous embedding \eqref{tc:limit:proof:cont_emb:u} that $\uL{\taum} \to \uflow$ strongly in $\L^2(0,T; \Lflow^r(\Omega))$, which yields \eqref{tc:limit:u:strong}.
    The convergence result \eqref{tc:limit:u:dt} with respect to the time derivative follows similarly to \cite[p. 29ff]{chen_liu_2013_entropy_solution}.\\
	\textit{Convergence properties of $\psi$.}
	We obtain from \eqref{eq:time_reg:psiL} that
	\begin{align}
	\label{tc:limit:proof:psi:timereg}
	\begin{split}
	&\int_0^{T - \tau} \norm{ \psiL{\taup}(\cdot, \tilde{\cdot}, t + \tau) - \psiL{\taup}(\cdot, \tilde{\cdot}, t) }_{(\HkOS{4})^\prime}^2 \dt \\
	&\qquad\qquad\qquad\quad = \sum_{k=1}^{N-1} \tau \norm{ \psiL{k} - \psiL{k-1} }_{(\HkOS{4})^\prime}^2
	\leq C \tau^2.
	\end{split}
	\end{align}
    Now, on noting \eqref{tc:limit:proof:psi:timereg}, \eqref{tc:regularity_past}, and the embeddings
    \begin{align*}
    \W^{1,1}(\OmSph) \hookrightarrow\hookrightarrow \L^s(\OmSph) \hookrightarrow \lrcb{\HkOS{4}}^\prime \quad \forall 1 < s < \frac{2d - 1}{2d -2},
    \end{align*}
    following the lines pp. 27-30 in \cite{chen_liu_2013_entropy_solution} yields the convergence results \eqref{tc:limit:psi:nablax_sqrt} - \eqref{tc:limit:psi:dt} with respect to $\{ \psiL{\taup} \}_{\regL> 1}$.
    Furthermore, we deduce from the fifth bound in \eqref{tc:regularity_past} and $\tau \in o(\regL^{-1})$ that \eqref{tc:limit:psi:strong} holds for $\{ \psiL{\taum} \}_{\regL > 1}$ as well. The nonnegativity of $\psi$ follows from the nonnegativity of $\psiL{\taup}$.
	Moreover, on noting \eqref{tc:def:omegaL}, the seventh bound in \eqref{tc:regularity_past}, and \eqref{tc:limit:psi:strong}, we have that $\psi$ $\in$ $\L^\infty(0, T; \L^2(\Omega; \L^1(\Sph)))$.
	This completes the proof of \eqref{tc:limit:psi}.
	Moreover, on noting \eqref{tc:limit:psi:strong}, it follows that $\eqref{tc:limit:finite_entropy}$ holds by subtracting one more subsequence.\\
	\textit{Convergence properties of $\omega$, $\Stress$, and $\pot[\psi]$.}
    On noting \eqref{tc:regularity_past}, \eqref{tc:def:omegaL}, \eqref{tc:def:omega}, \eqref{tc:def:StressL}, \eqref{tc:def:Stress}, \eqref{tc:pot_repres}, \eqref{mollifier:W_2_infty}, we infer from \eqref{tc:limit:psi:strong} that \eqref{tc:limit:omega} - \eqref{tc:limit:pot:strong} hold.\\
	\textit{Existence of weak solutions.}
    By applying the established convergence results in Theorem \ref{thrm:main}, it follows immediately that $\uflow$ and $\psi$ satisfy the weak formulations \eqref{P:weak:u} and \eqref{P:weak:psi}.\\
	\textit{Energy inequality.}
    On noting the convergence properties \eqref{tc:limit:u:weakstar}, \eqref{tc:limit:u:weak}, \eqref{tc:limit:u:strong}, \eqref{tc:limit:psi:nablax_sqrt}, \eqref{tc:limit:psi:nablag_sqrt}, and \eqref{tc:limit:psi:strong}, we infer from the (weak) lower-semicontinuity of the terms on the left-hand side of \eqref{est:discrete_entropy} that the energy inequality \eqref{eq:main:energy_ineq} holds by subtracting one more subsequence.
\end{proof}

\textbf{Acknowledgments.} This work has been supported by the Research Training Group (Graduiertenkolleg) 2339 “Interfaces, Complex Structures,
and Singular Limits in Continuum Mechanics” of the German Science Foundation (DFG). The author is grateful to
G\"unther Gr\"un and Stefan Metzger for many valuable discussions.

\bibliographystyle{acm}
\bibliography{bibliography}

\end{document}